\newtheorem{thm}{Theorem}[]
\newtheorem*{thm*}{Theorem}
\newtheorem{lem}[thm]{Lemma}
\newtheorem{prop}[thm]{Proposition}
\newtheorem{ex}[thm]{Example}
\newtheorem{rem}[thm]{Remark}
\newtheorem{defn}[thm]{Definition}
\newcommand{\RR}{{\mathbb{R}}}
\newcommand{\Z}{{\mathbb{Z}}}
\def\mcg{\mathrm{Mod}(\Sigma)}
\def\Vol{\mathrm{Vol}}
\def\be{\begin{equation}}
\def\ee{\end{equation}}
\newcommand{\param}{{\mathchoice{\mkern1mu\mbox{\raise2.2pt\hbox{$
\centerdot$}}
\mkern1mu}{\mkern1mu\mbox{\raise2.2pt\hbox{$\centerdot$}}\mkern1mu}{
\mkern1.5mu\centerdot\mkern1.5mu}{\mkern1.5mu\centerdot\mkern1.5mu}}}
\begin{document}
%\setpagewiselinenumbers
%\linenumbers

\title[The twist-cofinite topology]{
The twist-cofinite topology on the mapping class group of a surface} 
\author{Ingrid Irmer}
\address{International Center for Mathematics\\
Southern University of Science and Technology\\Shenzhen, China
}
\email{ingridmary@sustech.edu.cn}
\today
%\thanks{
% {\em Mathematics Classification.} Primary 57N10. Secondary 57M07.
%\newline

%There is an interesting discussion here https://mathoverflow.net/questions/179269/is-a-generic-closed-orientable-hyperbolic-3-manifold-haken
%Can I show that a generic 3-manifold is Haken?
%The bibliography needs work

\begin{abstract}
A topology is defined on the mapping class group $\mcg$ of a compact, connected, orientable surface $\Sigma$. It is shown that  a notion of ``genericity" on subsets of $\mcg$ arises from this definition. Many plausible results follow from this notion easily; for example, the set of pseudo-Anosov maps is shown to be generic, and can be assumed to have arbitrary large stretch factor, generically. Let $M$ be a 3-manifold obtained from a Heegaard splitting of fixed genus $g$ and generic gluing map.  It is shown that for such manifolds, generically $b_1(M)=0,$ $M$ is hyperbolic and $M$ has Heegaard genus exactly $g.$
\end{abstract}

\maketitle

{\footnotesize
\tableofcontents
}
%%%%%%%%%%%%%%%%%%%%%%%%%%%%%%%%%%%%%%%%%%%%%%%%%%%%%%%%%%%%%%%%%%%%%%%%%%%%%%%%%%%%%%%%%%%%%%%%%%%%%%%%%%%%%%%%%%%%%%%%%%%%%%%
%%%%%%%%%%%%%%%%%%%%%%%%%%%%%%%%%%%%%%%%%%%%%%%%%%%%%%%%%%%%%%%%%%%%%%%%%%%%%%%%%%%%%%%%%%%%%%%%%%%%%%%%%%%%%%%%%%%%%%%%%%%%%%%

\section{Introduction}
\label{sec.intro}
Let $\Sigma$ be a closed, compact, connected, orientable surface of genus $g\geq 2.$ The mapping class group of $\Sigma$ is the group $\mcg=\pi_0(\mathrm{Diff}^+(\Sigma))$ of isotopy classes of positively oriented diffeomorphisms of $\Sigma.$ For any property $\mathcal{P}(f)$ depending on a mapping class $f \in \mcg,$ one may ask whether the property $\mathcal{P}(f)$ is true for a generic element of $\mcg.$\\

Such questions have been investigated by various authors, for example, \cite{DT06}, \cite{R14a}, \cite{R14b} and \cite{LMW14}. Although there are many possible notions of genericity of a mapping class group element, in most of the above works, genericity is defined using the following measure theoretic definition: Choose a finite generating set $S$ of $\mcg,$ and let $W_n$ be a random variable in $\mcg$ which is obtained by an $n$-step random walk in the Cayley graph of $\mcg,$ starting at the identity. The property $\mathcal{P}$ is said to be generic if, in the limit as $n\rightarrow \infty$, the probability that  $\mathcal{P}$ is true for an element $W_{n}$ approaches 1. In the papers \cite{R14a} and \cite{EST}, several versions of the notion of random elements of $\mcg$ are developed, but these notions of genericity stay probabilistic in nature.\\
%$$\mathbb{P}_{n \rightarrow \infty}(\mathcal{P}(W_n) \ \textrm{is true}) \rightarrow 1.$$

For subsets of $\RR,$ there are two alternative notions of being generic:
\begin{itemize}
\item[-]A measure theoretic one, stating that a subset $S \subset \RR$ is generic (in the sense of Lebesgue) if $\RR \setminus S$ has Lebesgue measure $0.$
\item[-]A topological one, stating that $S$ is generic (in the sense of Baire) if $S$ contains a countable intersection of open dense subsets of $\RR.$
\end{itemize}
The aim of this paper is to develop a topological notion of genericity of a subset of the mapping class group. In order to achieve this goal, in \cite{Detcherry}, a topology on $\mcg$ called the \textit{twist-cofinite topology} was defined. For $\gamma$ a non-trivial simple closed curve on $\Sigma,$ let $D_{\gamma}$ be the Dehn-twist around $\gamma.$

\begin{defn}[Twist-cofinite Topology]
A set of open sets is given by $$\mathcal{O}=\lbrace U \subset \mcg \ | \ \forall f \in U, \forall \gamma, f \circ D_{\gamma}^n \in U \ \textrm{for all except at most finitely many } n \in \Z  \rbrace.$$
\end{defn}

\begin{thm}\label{thm:topology}
The open sets $\mathcal{O}$ define a topology on $\mcg,$ and $\mcg$ is a connected topological group for this topology.
\end{thm}

A fundamental property of the twist-cofinite topology, similar for example to the Zariski topology, is that open sets are quite large. Specifically, a subset $S$ of $\mcg$ will be called \textit{thick} if $S$ has non-empty interior. The next proposition suggests that thickness of a subset is a good notion of genericity:

\begin{prop}\label{prop:topology}In the topology $\mathcal{O},$ 
\begin{itemize}
\item[-]The intersection $U_1 \cap U_2$ of two non-empty open sets $U_1$ and $U_2$ is non-empty.
\item[-]Any thick subset is dense.
\item[-]The intersection  $S_{1} \cap S_{2}$ of  two thick subsets is thick.
\end{itemize}
\end{prop}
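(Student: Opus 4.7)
The three items are linked, with (1) the crux and (2), (3) following formally. For (2), given a thick $S$ with non-empty open $U \subseteq S$ and any non-empty open $V$, part (1) gives $V \cap U \neq \emptyset$, hence $V \cap S \neq \emptyset$, so $S$ is dense. For (3), it suffices by induction to treat $k=2$: if $S_1, S_2$ are thick with non-empty opens $U_i \subseteq S_i$, then $U_1 \cap U_2$ is open and (by (1)) non-empty, so $S_1 \cap S_2 \supseteq U_1 \cap U_2$ is thick.

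For (1), let $U_1, U_2$ be non-empty open with $f_i \in U_i$. Since $\mcg$ is a topological group (Theorem \ref{thm:topology}), left-translation by $f_1^{-1}$ is a homeomorphism and the problem reduces to showing that every open neighborhood $V_1 \ni e$ meets every open set $V_2$ containing a given $g \in \mcg$. By Dehn--Lickorish, write $g = D_{\delta_1}^{a_1} \cdots D_{\delta_s}^{a_s}$. I plan to produce a common element of the form $D_{\delta_1}^{n_1} \cdots D_{\delta_s}^{n_s}$. For $V_1$, such an element can be built iteratively from $e$ by right-twisting: $V_1$ contains $D_{\delta_1}^{n_1} \cdots D_{\delta_s}^{n_s}$ whenever $(n_1, \ldots, n_s)$ lies in an iterated cofinite subset of $\Z^s$, where at each stage the cofinite condition depends on the previously chosen exponents. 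For $V_2$ one exploits that, by Theorem \ref{thm:topology}, open sets are stable under both left- and right-multiplication by cofinitely many Dehn twist powers: left-twisting of $g$ adjusts $a_1$ over a cofinite set of values and right-twisting adjusts $a_s$, and when $s=2$ these two operations combine to yield the analogous iterated cofinite family for $V_2$.

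The base cases $s = 0, 1$ and the case $s = 2$ go through cleanly: a common element exists because the intersection of two iterated cofinite subsets of $\Z^s$ is non-empty (each coordinate lies in the intersection of finitely many cofinite subsets of $\Z$, hence is non-empty). The main obstacle I anticipate is the case $s \geq 3$: adjusting the intermediate exponents $a_2, \ldots, a_{s-1}$ of the $V_2$-decomposition is not directly achieved by one-sided twisting, and will require combining left- and right-twists with the conjugation identity $\phi D_\gamma \phi^{-1} = D_{\phi(\gamma)}$, and possibly padding by trivial insertions $D_\mu^M D_\mu^{-M} = e$ to create room for further cofinite adjustments. The key mechanism that should make this succeed is that finitely many cofinite conditions on a free integer parameter can always be imposed simultaneously, so each level of the manipulation preserves a non-empty iterated cofinite set of valid exponent tuples. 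Once the iterated cofinite description of $V_2 \cap \{D_{\delta_1}^{n_1} \cdots D_{\delta_s}^{n_s}\}$ is established, (1) follows, and with it (2) and (3).
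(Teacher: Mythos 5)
Your handling of (2) and (3) given (1) is correct, and reducing (1) via left translation to the case $V_1 \ni e$, $V_2 \ni g$ is legitimate (left translation is a homeomorphism by Theorem \ref{generaltopologytheorem}). The genuine gap is precisely where you flag it: for $s \geq 3$ you cannot control the intermediate exponents $n_2, \ldots, n_{s-1}$ of a candidate element $D_{\delta_1}^{n_1} \cdots D_{\delta_s}^{n_s}$ inside $V_2$, because one-sided twisting only adjusts the first and last exponents, and your sketch of a fix via conjugation and padding is not an argument. The underlying problem is that you have committed to producing a common element of the \emph{same rigid shape} as the word for $g$, and that commitment is what makes the middle exponents inaccessible.

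The paper (Proposition \ref{prop:normal_gen_set}) instead moves both endpoints simultaneously and shortens the connecting word. With $g \in U_1$, $h \in U_2$, write $h = g\, x_1^{i_1} \cdots x_k^{i_k}$ with each $x_j$ a Dehn twist. Since $g x_k^m \in U_1$ for all but finitely many $m$ and $h x_k^n \in U_2$ for all but finitely many $n$, one can pick $m, n$ on the line $m = n + i_k$ with both conditions satisfied. Then
\begin{equation*}
h x_k^n \;=\; (g x_k^m)\cdot \bigl(x_k^{-m} x_1 x_k^{m}\bigr)^{i_1}\cdots \bigl(x_k^{-m} x_{k-1} x_k^{m}\bigr)^{i_{k-1}},
\end{equation*}
a word of length $k-1$, because $f D_\gamma f^{-1} = D_{f(\gamma)}$ keeps the conjugates inside the generating set of all Dehn twists. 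Iterating reduces $k$ to $0$ and lands at a common point of $U_1 \cap U_2$. This is the conjugation mechanism you were reaching for, but used to \emph{shorten the word} rather than to reshuffle a fixed target shape; it dissolves the intermediate-exponent obstruction because the letters of the word are replaced by fresh conjugate twists at each step, and it is the reason conjugation-invariance of the generating set (rather than just infiniteness of the twists) is essential to the proposition.
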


With this notion of genericity, generic properties of subsets of $\mcg$ will be studied. For example,

\begin{thm}\label{thm:pseudoAnosov}The following subsets of $\mcg$ are non-empty and open (i.e. thick):
\begin{itemize}
\item[-]The set of pseudo-Anosov maps $f \in \mcg.$
\item[-]The sets $\lbrace f \in \mcg \ \textrm{pseudo-Anosov} \ | \ \Vol (M_{f})>C \rbrace,$ where $M_f$ is the mapping torus of $f$ and $C>0.$
\item[-]The sets $\lbrace f \in \mcg \ \textrm{pseudo-Anosov} \ | \ \lambda(f)>C \rbrace,$ where $\lambda(f)$ is the stretch factor of $f$ and $C>0.$
\item[-]The set of pseudo-Anosovs that do not leave any integer homology class of curves on $\Sigma$ invariant.
\end{itemize}
\end{thm}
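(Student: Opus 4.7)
The plan is to verify, for each of the four subsets, both non-emptiness and openness in the twist-cofinite topology. The geometric idea powering all four openness arguments is that the mapping torus $M_{f\circ D_\gamma^n}$ is obtained from $M_f$ by $1/n$-Dehn surgery along $\gamma$, regarded as an embedded knot in a fiber $\Sigma\times\{\tfrac12\}\subset M_f$. A basic twist-cofinite neighborhood of $f$ is thereby realized as a family of Dehn surgeries on a single 3-manifold, which opens the door to Thurston's hyperbolic Dehn surgery theorem.

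For the first bullet, non-emptiness is classical (Thurston or Penner constructions). For openness, let $f$ be pseudo-Anosov; then $M_f$ is hyperbolic by Thurston's geometrization of fibered 3-manifolds. Since $\Sigma\hookrightarrow M_f$ is $\pi_1$-injective, the curve $\gamma$ is essential in $M_f$, and $M_f\setminus\gamma$ carries a complete finite-volume hyperbolic structure. Hyperbolic Dehn surgery then gives that $M_{f\circ D_\gamma^n}$ is hyperbolic for all but finitely many $n$. Since $M_{f\circ D_\gamma^n}$ still fibers over $S^1$ with monodromy $f\circ D_\gamma^n$, the converse direction of Thurston's theorem forces $f\circ D_\gamma^n$ to be pseudo-Anosov, which is openness.

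For the second bullet, non-emptiness follows from $\Vol(M_{f^n})=n\,\Vol(M_f)$, since $M_{f^n}$ is a cyclic $n$-fold cover of $M_f$. For openness, hyperbolic Dehn surgery further shows that the filled volumes converge from below to $\Vol(M_f\setminus\gamma)>\Vol(M_f)>C$, so $\Vol(M_{f\circ D_\gamma^n})>C$ for all but finitely many $n$. For the third bullet, non-emptiness is immediate from $\lambda(f^n)=\lambda(f)^n$. Openness can be derived from Kojima--McShane's inequality $\Vol(M_\phi)\leq K|\chi(\Sigma)|\log\lambda(\phi)$ combined with the volume convergence above, or, more robustly, from the estimate $i(\alpha,(f\circ D_\gamma^n)\alpha)\asymp|n|$ for a curve $\alpha$ chosen close to the stable foliation of $f$ and satisfying $i(\alpha,\gamma)>0$, which by standard asymptotic intersection-number growth forces $\lambda(f\circ D_\gamma^n)\to\infty$.

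For the fourth bullet, the argument is linear-algebraic. The homological action satisfies $(f\circ D_\gamma^n)_*=f_*+n\,f_*\Phi_\gamma$ on $H_1(\Sigma;\QQ)$, where $\Phi_\gamma(v)=\langle v,[\gamma]\rangle[\gamma]$ is a rank-one transvection. Hence $(f\circ D_\gamma^n)_*-I$ is a rank-one perturbation of $f_*-I$ depending linearly on $n$, and the matrix determinant lemma gives that $\det\bigl((f\circ D_\gamma^n)_*-I\bigr)$ is an affine function of $n$ and vanishes for at most one value. For every other $n$, the operator has no nonzero fixed integer vector, so $f\circ D_\gamma^n$ leaves no integer homology class invariant. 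Non-emptiness follows from the existence of a pseudo-Anosov whose homological action has no eigenvalue $1$, produced for example by Thurston's construction from appropriate Perron data. The main obstacle throughout is the third bullet: stretch factors, unlike volumes, are not direct outputs of hyperbolic Dehn surgery, and one must marry the surgery picture with either a volume-entropy inequality or a genuine intersection-number computation to obtain the required lower bound, with some care taken when $[\gamma]=0$ so that the homological contribution to the growth estimate degenerates.
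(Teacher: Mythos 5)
Your approach is genuinely different from the paper's and has an appealing unity: you try to run every bullet through the Dehn surgery description of $M_{f\circ D_\gamma^n}$ as $1/n$-filling of $M_f\setminus\gamma$. The paper instead quotes black-box theorems bullet by bullet: it cites Theorem~A of \cite{LM86} (for all but finitely many $n$, $D_c^n f$ is pseudo-Anosov) for the first bullet, Neumann--Zagier \cite{NZ} for the second, and Theorem~C of \cite{LM86} (which gives linear growth $|n|L_1-K\le\lambda(D_c^n f)\le L_2|n|+K$) for the third. Your bullet~2 and bullet~4 arguments are fine, and indeed your matrix determinant lemma argument for bullet~4 is cleaner and more explicit than the paper's somewhat informal symplectic-basis computation. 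But two of your steps have real gaps.

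For bullet~1, you assert without justification that ``$M_f\setminus\gamma$ carries a complete finite-volume hyperbolic structure.'' The $\pi_1$-injectivity of $\Sigma$ in $M_f$ only guarantees that $\gamma$ is essential; it does not rule out an essential torus in the complement, which would occur if $\gamma$ sits inside a solid torus in $M_f$ without being isotopic to its core. One would have to argue separately that a simple closed curve on the fiber of a hyperbolic fibered $3$-manifold always has atoroidal (and anannular) complement, and that claim is not obvious and is not in your proof. The theorem cited by the paper (\cite{LM86}) sidesteps this by working directly with the surface dynamics rather than with the drilled $3$-manifold.

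For bullet~3, neither of your two routes actually closes. The Kojima--McShane route fails quantitatively: the filled volumes $\Vol(M_{f\circ D_\gamma^n})$ are bounded above by the cusped volume $\Vol(M_f\setminus\gamma)$, so the inequality $\lambda(\phi)\ge e^{\Vol(M_\phi)/(K|\chi(\Sigma)|)}$ yields only a lower bound that is independent of $n$ and may well be smaller than $C$ (take $f$ with $\lambda(f)$ huge but $\Vol(M_f)$ small). The intersection-number route is also incomplete as written: from $i(\alpha,(f\circ D_\gamma^n)\alpha)\asymp|n|$ alone it does not follow that $\lambda(f\circ D_\gamma^n)\to\infty$; the displacement of a single curve under a single iterate controls neither the translation length on Teichm\"uller space nor the Perron--Frobenius eigenvalue without additional input. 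What is true is precisely the linear growth $\lambda(D_c^n f)\gtrsim |n|$, but that is the content of Theorem~C of \cite{LM86}, and your sketch does not supply a substitute for that result.
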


In \cite{DT06} Dunfield and Thurston used the measure theoretic notion of genericity in $\mcg$ and Heegaard splittings to study the genericity of various properties of $3$-manifolds. The same notion of random $3$-manifolds was studied in subsequent papers \cite{R14a} \cite{R14b} and \cite{LMW14} by various authors.\\

Similar results can be obtained with the notion of thickness/genericity  defined above.\\

Let $H_{A}$ and $H_{B}$ be a pair of handlebodies, with $\partial H_{A}=\partial H_{B}=\Sigma$.  For $f \in \mcg,$ let $N_f$ be the $3$-manifold:
$$N_f = H_{A} \underset{f}{\cup} H_{B}$$ where $f$ is called the gluing map. The properties of $3$-manifold invariants of $N_f$ for a generic $f$ can be studied. Some of the simplest such invariants are first Betti numbers. Denote by $b_{1}(N_{f},\mathbb{Z})$ the first Betti number of $N_{f}$,  and  by $|H_{1}(N_{f};\Z)|$ the number of elements in the group $H_{1}(N_{f};\Z)$.

\begin{thm}
\label{thm:Betti} The following are all thick:
\begin{enumerate}
\item The set $\lbrace f \in \mcg \ | \ b_{1}(N_{f},\mathbb{Z})=0\rbrace$ 
\item The set $\lbrace f \in \mcg \ | \ |H_{1}(N_{f};\Z)|\geqslant k \rbrace$ for any integer $k\geqslant 0$
\end{enumerate}
Suppose $p$ is prime and $0\leqslant i \leqslant g$. In contrast, the set $\lbrace f \in \mcg \ | \ b_{1}(N_{f},\mathbb{F}_p)=i\rbrace$ is dense but not thick.
\end{thm}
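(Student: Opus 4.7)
The strategy is to reduce all three parts to algebraic properties of a single $g \times g$ integer matrix $C_f$ built from the action of $f$ on $H_1(\Sigma, \Z)$. Let $L_A, L_B \subset H_1(\Sigma, \Z)$ be the Lagrangian sublattices of classes bounding in $H_A$ and $H_B$, and let $\pi_A, \pi_B$ denote the projections to $H_1(H_A), H_1(H_B)$. By Mayer--Vietoris, $H_1(N_f)$ is the cokernel of $\phi_f: H_1(\Sigma) \to H_1(H_A) \oplus H_1(H_B)$, $c \mapsto (\pi_A(c), -\pi_B f_*(c))$. Choosing a symplectic $\Z$-basis of $H_1(\Sigma)$ adapted to $L_A$, one sees $|\det \phi_f| = |\det C_f|$ where $C_f := \pi_B \circ f_*|_{L_A}$. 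Consequently $b_1(N_f, \Z) = 0 \iff \det C_f \neq 0$, $|H_1(N_f, \Z)| = |\det C_f|$ in that case, and $b_1(N_f, \mathbb{F}_p) = g - \mathrm{rank}_{\mathbb{F}_p}(C_f \bmod p)$.

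Because $D_\gamma$ acts on $H_1(\Sigma)$ as the symplectic transvection $c \mapsto c + \omega(\gamma, c)\gamma$, a direct computation gives
\[
  C_{f \circ D_\gamma^n} \;=\; C_f + n\, v w^T, \qquad v = \pi_B f_*(\gamma),\quad w = \omega(\gamma,\cdot)|_{L_A},
\]
a rank-one perturbation linear in $n$. By the matrix determinant lemma, $\det(C_{f \circ D_\gamma^n})$ is a polynomial of degree at most one in $n$. For part (1), if $\det C_f \neq 0$ this linear polynomial has a nonzero constant term and vanishes for at most one $n$, so $\{f \mid b_1(N_f, \Z) = 0\}$ is open in $\mathcal{O}$ and hence thick. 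For part (2), if $f$ lies in the set then either $|\det C_f| \geq k$ (and $|\det(C_{f \circ D_\gamma^n})|$ is either constant or tends to infinity with $|n|$), or $\det C_f = 0$ (whence $|H_1(N_f, \Z)| = \infty$ and $\det(C_{f \circ D_\gamma^n}) = n \cdot (\text{const})$ either vanishes identically or has modulus $\geq k$ for $|n|$ large). In each subcase $|H_1(N_{f \circ D_\gamma^n}, \Z)| \geq k$ for all but finitely many $n$, so the set is open.

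For (3), write $S_j := \{f \mid b_1(N_f, \mathbb{F}_p) = j\}$ for $0 \leq j \leq g$. To prove density of $S_j$, given a nonempty open $U$ and $f_0 \in U$, I fix an explicit $\sigma^* \in \mathrm{Sp}_{2g}(\mathbb{F}_p)$ whose lower-left $g \times g$ block has rank $g - j$, for instance $\bigl(\begin{smallmatrix} I & 0 \\ \mathrm{diag}(I_{g-j},\,0) & I \end{smallmatrix}\bigr)$. Using that $\mcg \twoheadrightarrow \mathrm{Sp}_{2g}(\mathbb{F}_p)$ is generated by images of Dehn twists, write $(f_0)_*^{-1}\sigma^* \equiv T_{\gamma_1}^{n_1} \cdots T_{\gamma_r}^{n_r} \pmod p$. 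Iteratively set $f_i := f_{i-1} \circ D_{\gamma_i}^{n_i + p N_i}$, choosing $N_i$ large at each step so that $f_i \in U$ (possible by openness of $U$). Since $T_\gamma^{pN} \equiv I \pmod p$, the endpoint $f_r \in U$ satisfies $(f_r)_* \equiv \sigma^* \pmod p$, and hence $f_r \in U \cap S_j$. Non-thickness of each $S_j$ then follows immediately from Proposition~\ref{prop:topology}: a thick $S_j$ would contain a nonempty open $U$, which by density of $S_{j'}$ for any $j' \neq j$ would meet $S_{j'}$, contradicting the pairwise disjointness of the $S_j$.

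The delicate step is the iteration in the density argument: the twist-cofinite topology exactly provides the flexibility to absorb arbitrary large multiples of $p$ into each exponent (preserving the prescribed mod-$p$ image in $\mathrm{Sp}_{2g}(\mathbb{F}_p)$) while keeping each intermediate $f_i$ in the given open set. This interaction between the topology and mod-$p$ linear-algebraic invariants of the Heegaard splitting is the main technical point.
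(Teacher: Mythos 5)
Your proof is correct, and for parts (1) and (2) it takes the same Mayer--Vietoris route the paper does, but your execution is cleaner: recognizing $C_{f\circ D_\gamma^n}-C_f$ as a rank-one perturbation $n\,vw^T$ and invoking the matrix determinant lemma reduces everything to a single affine polynomial in $n$, which neatly subsumes the paper's more ad hoc argument about choosing $[a_g],[b_1]$ so that $[c]$ lies in their span and projecting onto $[a_g]$. Your case analysis in (2) also cleanly handles the $\det C_f=0$ subcase, which the paper passes over quickly. (You treat only right-multiplication $f\circ D_\gamma^n$; that is sufficient because the twist-cofinite topology is defined that way and coincides with the right-handed version by Proposition~\ref{prop:normal_gen_set}, but it's worth noting the paper redundantly checks $D_\gamma^n\circ f$ too.) For part (3) you go a genuinely different route. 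The paper's proof is a one-liner: $A(fD_c^{np})\equiv A(f)\pmod p$ for every $n$, so each $S_j$ meets every coset $\{fD_c^m\}_{m\in\Z}$ infinitely often and is dense by Lemma~\ref{lem:notthick}, with non-thickness following because the $S_j$ partition $\mcg$. Your argument instead constructs an element of $U\cap S_j$ from scratch using the surjection $\mcg\twoheadrightarrow\mathrm{Sp}_{2g}(\mathbb{F}_p)$, generation by transvections, and the absorption of arbitrary multiples of $p$ into each twist exponent while remaining in $U$. This is more machinery than the paper needs, but it has the advantage of establishing nonemptiness of each $S_j$ along the way, which both approaches otherwise implicitly assume. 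One small caveat: the explicit $\sigma^*=\bigl(\begin{smallmatrix} I & 0 \\ \mathrm{diag}(I_{g-j},\,0) & I \end{smallmatrix}\bigr)$ you write down gives $\mathrm{rank}(C_f\bmod p)=g-j$ only if the symplectic basis is chosen so that $L_A$ and $L_B$ coincide with the first coordinate Lagrangian; in general you should simply say ``choose $\sigma^*\in\mathrm{Sp}_{2g}(\mathbb{F}_p)$ with $\mathrm{rank}(\pi_B\sigma^*|_{L_A})=g-j$,'' which is possible since $\mathrm{Sp}$ acts with orbits on pairs of Lagrangians classified exactly by this rank.
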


The following will also be proven, using somewhat different techniques:

\begin{thm}
\label{thm:stabilisation}
 The sets $\lbrace f \in \mcg \ | \ g(N_f)=g(\Sigma) \rbrace$ and $\lbrace f \in \mcg \ | \ g(N_{f})\geq n\rbrace$  for every $n\geq 2$ are thick, where  $g(N_{f})$ is the Heegaard genus of $N_{f}$.
\end{thm}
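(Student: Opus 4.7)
The plan is to invoke the Scharlemann--Tomova theorem: a Heegaard splitting with Hempel distance strictly greater than $2g$ realises the unique minimal Heegaard genus of the ambient $3$-manifold. Writing $d(f):=d_{\mathcal{C}(\Sigma)}(D_A,f^{-1}(D_B))$ for the Hempel distance of $H_A\cup_f H_B$, it therefore suffices to produce a non-empty open set $U\subset\mcg$ on which $d(f)>2g$, so that $g(N_f)=g$ throughout $U$.

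I would analyse how $d$ responds to a Dehn twist $D_\gamma^n$ in two cases. When $\gamma\in D_A$ or $f(\gamma)\in D_B$, the twist lies in the handlebody subgroup of one side, so $N_{fD_\gamma^n}\cong N_f$ and $d$ is preserved exactly for all $n$. For any other $\gamma$, $D_\gamma$ acts on the Gromov-hyperbolic complex $\mathcal{C}(\Sigma)$ as an elliptic isometry fixing $\gamma$, and the Masur--Minsky bounded geodesic image theorem yields the lower bound
\[d(fD_\gamma^n)\;\ge\;d_{\mathcal{C}(\Sigma)}(D_A,\gamma)+d_{\mathcal{C}(\Sigma)}(\gamma,f^{-1}(D_B))-C_0\;\ge\;d(f)-C_0\]
for all but finitely many $n$, where $C_0$ depends only on the hyperbolicity constant of $\mathcal{C}(\Sigma)$ and the last inequality is the triangle inequality.

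Next I would use Theorem~\ref{thm:pseudoAnosov} to furnish, on a thick set, pseudo-Anosovs whose stable and unstable laminations avoid the proper limit sets of the quasiconvex disk complexes $D_A,D_B$ in $\mathcal{PML}(\Sigma)$; a standard quasi-convex dynamics argument then produces $f_0$ with $d(f_0)$ as large as desired. I would take $U$ to be defined by a Masur--Minsky subsurface-projection refinement of the inequality $d(f)>2g$---requiring $d_Y(D_A,f^{-1}(D_B))>K$ for each $Y$ in a fixed finite family of subsurfaces---so that the defining condition is preserved by any individual Dehn twist up to a universally bounded perturbation and hence is open in the twist-cofinite topology. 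The Masur--Minsky distance formula then enforces $d(f)>2g$ throughout $U$, and Scharlemann--Tomova concludes $g(N_f)=g$.

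The main obstacle I anticipate is making $U$ genuinely open rather than merely thick. The naive candidate $\{f:d(f)>2g+C_0\}$ is not obviously open: a single Dehn twist may drop $d$ by as much as $C_0$, placing $fD_\gamma^n$ in the larger set $\{d>2g\}$ rather than back in the candidate, and iterated twist-cofinite deformations could in principle accumulate the drop. The subsurface-projection refinement in the previous paragraph is exactly what restores stability under cofinite Dehn twisting, and verifying that the perturbation of each $d_Y$ under $D_\gamma^n$ is uniformly bounded (independent of $Y$) for cofinite $n$ is the technical core of the argument.
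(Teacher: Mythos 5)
Your strategy---reduce to Hempel distance, then produce a non-empty open set of gluings with large distance---is the same route the paper takes, which deduces this theorem from Theorem~\ref{thm:Hempel}. The problem is that you try to re-derive the openness of the large-distance set from scratch, and the argument has a genuine gap at precisely the step you flag as the ``technical core.'' The bound $d(fD_\gamma^n)\ge d(f)-C_0$ that you extract from bounded geodesic image allows the distance to drop under each twist, so the candidate set $\{d>2g\}$ is not obviously in $\mathcal O$, and your proposed repair---control the subsurface projections over a \emph{fixed finite} family of subsurfaces---cannot work as stated: as you compose with twists about varying curves $\gamma$, the relevant subsurfaces (the annuli $\mathcal A_\gamma$, and the subsurfaces to which the disk sets project with unbounded diameter) change, so no fixed finite family detects them all. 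More seriously, the case your argument never confronts is $d(\gamma,D_A)=1$: by Masur--Schleimer (Theorem~\ref{masurschleimer}) the handlebody set then has \emph{unbounded} diameter in the projection to $\mathcal A_\gamma$, the BGI heuristic does not apply directly, and this is exactly the situation to which the paper devotes the bulk of its proof of Lemma~\ref{maintheorem}. That lemma yields the sharp bound $d(A,D_c^n(B))\ge d(A,B)$ for all but finitely many $n$ (no $C_0$ loss), which is what your argument is missing. A minor further point: what you call the Masur--Minsky ``distance formula'' is for marking/pants complexes, not curve-complex distance, so it cannot ``enforce'' $d(f)>2g$ in the way you claim.

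On the other hand, your appeal to Scharlemann--Tomova with the threshold $d>2g$ is a cleaner and more defensible route to ``large distance implies minimal genus'' than the paper's own (which equates being ``stabilised'' with $g(N_f)<g(\Sigma)$ and concludes from $d\ge 3$ alone). If you simply invoke Theorem~\ref{thm:Hempel} with $d\ge 2g+1$ rather than attempting to reprove it, your Scharlemann--Tomova step then closes the argument correctly; the right move is to cite the paper's Lemma~\ref{maintheorem}, not to reconstruct it.
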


A natural question to ask is whether, with this notion of genericity, a generic Heegaard splitting gives an irreducible manifold and whether it gives a hyperbolic manifold. Note that due to the Geometrisation theorem,  being hyperbolic is equivalent to being irreducible and atoroidal. A more general question is therefore whether $N_f$ admits an embedded incompressible surface of genus less than or equal to $ g$.

\begin{thm}
\label{thm:incompressible}
For any $g\geqslant 0,$ the set 
$$\lbrace f \in \mcg \ | \ N_f \ \textrm{has no embedded incompressible surface of genus }\leqslant g\rbrace$$
is thick.
\end{thm}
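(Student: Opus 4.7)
The plan is to invoke Hartshorn's theorem---a closed orientable 3-manifold containing an essential closed surface of genus $h$ has Heegaard distance at most $2h$---to reduce the problem to showing that, for every $D\geqslant 0$, the set
$$\mathcal{T}_D := \{f \in \mcg : d_{\mathcal{C}(\Sigma)}(\mathcal{V}_A,\, f\mathcal{V}_B) > D\}$$
is thick, where $\mathcal{V}_A,\mathcal{V}_B\subset \mathcal{C}(\Sigma)$ are the disk sets of the two handlebodies inside the curve complex of $\Sigma$. Indeed, any $f\in \mathcal{T}_{2g}$ then belongs to the set of Theorem \ref{thm:incompressible}.

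Non-emptiness of $\mathcal{T}_D$ is a standard fact: one may take $f=\phi^k$ for a pseudo-Anosov $\phi$ whose stable and unstable laminations lie outside the closures of $\mathcal{V}_A$ and $\mathcal{V}_B$ in $\mathcal{PML}(\Sigma)$, and $k$ sufficiently large, giving Heegaard distance larger than $D$.

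Openness of $\mathcal{T}_D$ is the main obstacle. Given $f\in\mathcal{T}_D$ and a simple closed curve $\gamma\subset\Sigma$, set $\delta=f(\gamma)$; since $f\circ D_\gamma^n = D_\delta^n\circ f$, we must show
$$d_{\mathcal{C}(\Sigma)}\bigl(\mathcal{V}_A,\, D_\delta^n f\mathcal{V}_B\bigr) > D\qquad\text{for all but finitely many }n\in\Z.$$
The degenerate cases $\gamma\in\mathcal{V}_B$ or $\delta\in\mathcal{V}_A$ are trivial: $D_\gamma^n$ (respectively $D_\delta^n$) extends over a handlebody, so $N_{f\circ D_\gamma^n}\cong N_f$ for every $n$, and the property is automatically preserved.

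In the generic case, my plan is to use Masur--Minsky subsurface projection machinery. Since $D_\delta$ acts by unit translation on the annular curve complex $\mathcal{C}(\delta)\cong\Z$, the $\delta$-projection of $D_\delta^n f\mathcal{V}_B$ shifts by $n$, and for $|n|$ large the Masur--Minsky bounded geodesic image theorem forces any geodesic realizing $d_{\mathcal{C}(\Sigma)}(\mathcal{V}_A, D_\delta^n f\mathcal{V}_B)$ to pass through a vertex near $\delta$. A triangle-inequality bookkeeping around $\delta$ then yields a lower bound on the geodesic length. The main technical challenge is to make this length estimate quantitative and uniform in $f$, $\gamma$ and $n$, so that the resulting bound still exceeds $D$; any loss can be absorbed by replacing $\mathcal{T}_D$ with $\mathcal{T}_{D+C}$ for a universal constant $C$, whose non-emptiness follows by the same pseudo-Anosov construction.
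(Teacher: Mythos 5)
Your framework and reduction are exactly the paper's: Lemma \ref{lem:genusdistance} is the paper's version of Hartshorn's bound, and Theorem \ref{thm:incompressible} is reduced to showing that large Heegaard distance is an open condition (Theorem \ref{thm:Hempel}), which in turn reduces to Lemma \ref{maintheorem}. Your rewriting $f\circ D_\gamma^n = D_\delta^n\circ f$ with $\delta=f(\gamma)$, the pseudo-Anosov construction for non-emptiness, and the dismissal of the cases $\gamma\in\mathcal{V}_B$ or $\delta\in\mathcal{V}_A$ are all correct and present in the paper.

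The gap is in the ``generic case'' you describe. Your Masur--Minsky argument implicitly assumes that the $\delta$-annular projections of both $\mathcal{V}_A$ and $f\mathcal{V}_B$ have \emph{bounded} diameter, so that shifting one of them by $n$ eventually separates them and forces the geodesic to pass near $\delta$. That assumption fails precisely when $\delta$ is at distance $1$ from $\mathcal{V}_A$ (or $\gamma$ at distance $1$ from $\mathcal{V}_B$): by the Masur--Schleimer theorem (Theorem \ref{masurschleimer} in the paper), the disk set $\mathcal{V}_A$ then has \emph{unbounded} diameter in the projection to $\mathcal{A}_\delta$, because there is a disk curve disjoint from $\delta$ whose band sums can wind around $\delta$ arbitrarily many times. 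In that situation the bounded geodesic image theorem gives you nothing: $D_\delta^n f\mathcal{V}_B$ never escapes the $\delta$-projection of $\mathcal{V}_A$ no matter how large $|n|$ is, and the ``triangle-inequality bookkeeping around $\delta$'' does not even start. Your ``absorb a uniform constant $C$'' trick does not repair this, since the obstruction is not a bounded error but the total failure of the projection to detect the twisting.

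This distance-$1$ case is exactly where the hypothesis $d(A,B)\geq 3$ (equivalently $D\geq 3$) becomes essential and is the entire technical content of Sections \ref{sec:handlebody}--\ref{finalsubsection} of the paper: one decomposes the disk set $A$ into pieces $s_1,s_2,s_3$ according to their annular projection to $\mathcal{A}_c$, introduces $c$-twisting band sums and the train track $\tau_c$, and shows (Lemma \ref{lem:largeY}, Corollary \ref{corollary}) that any disk curve drifting far in the $\mathcal{A}_c$ direction simultaneously acquires a large subsurface projection to some other subsurface $Y\subset \Sigma\setminus c$ on which $B$ has bounded projection (Lemma \ref{distance3}). That auxiliary large projection---not the one to $\mathcal{A}_c$---is what forces geodesics to stay long. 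The easy case $d(c,A)\geq 2$ and $d(c,B)\geq 2$ that your sketch actually covers was already known (the paper cites Theorem 1.4 of \cite{Yoshizawa}), so the part you identify as a ``main technical challenge'' of being ``quantitative and uniform'' is not the real difficulty; the real difficulty is the unbounded annular projection case, which you do not address.
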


This theorem will be reduced to studying a property of Hempel distance in Harvey's complex of curves.  Hempel distance also gives a lower bound on the genus of an incompressible surface. Theorem \ref{thm:incompressible} will be shown to follow from Theorem \ref{thm:Hempel} below:

\begin{thm}\label{thm:Hempel}
For any $d\geqslant 3,$ the set 
$$\lbrace f \in \mcg \ | \ \textrm{the Heegaard splitting induced by }f \ \textrm{has distance}\geqslant d \rbrace$$
is an open dense set.
\end{thm}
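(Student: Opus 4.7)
The plan is to prove openness of $\{f : d_H(f) \geq d\}$ directly, then deduce density from Proposition \ref{prop:topology} together with Hempel's classical construction of Heegaard splittings of arbitrarily high distance (which gives non-emptiness of the set). Denote $d_H(f) = d_{\mathcal{C}(\Sigma)}(\mathcal{D}_A, f^{-1}\mathcal{D}_B)$, where $\mathcal{D}_A, \mathcal{D}_B \subset \mathcal{C}(\Sigma)$ are the handlebody disk sets.

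For openness I need to show: if $d_H(f) \geq d$ and $\gamma$ is any essential simple closed curve on $\Sigma$, then $d_H(f D_\gamma^n) \geq d$ for all but finitely many $n$. I would first dispatch the trivial cases $\gamma \in \mathcal{D}_A$ and $f(\gamma) \in \mathcal{D}_B$: in both, $D_\gamma$ (or its $f$-conjugate $D_{f(\gamma)}$) extends to one of the handlebodies, so $d_H(f D_\gamma^n) = d_H(f)$ identically. In the main case $\gamma$ is not disk-bounding on either side, and I would appeal to Masur--Minsky subsurface projection theory at the annulus $A_\gamma$. Under the identification $\mathcal{C}(A_\gamma) \cong \Z$, the Dehn twist $D_\gamma^{-n}$ acts by translation by $-n$; when $\gamma$ is additionally disk-busting on each side, the projections $\pi_\gamma(\mathcal{D}_A)$ and $\pi_\gamma(f^{-1}\mathcal{D}_B)$ have uniformly bounded diameter. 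Therefore $d_{\mathcal{C}(A_\gamma)}(\pi_\gamma\mathcal{D}_A, \pi_\gamma D_\gamma^{-n} f^{-1}\mathcal{D}_B) \to \infty$ with $|n|$, and for $|n|$ exceeding a finite threshold it exceeds the bounded geodesic image constant $M$. Every $\mathcal{C}(\Sigma)$-geodesic from a vertex of $\mathcal{D}_A$ to one of $D_\gamma^{-n} f^{-1}\mathcal{D}_B$ must then contain a vertex in the $1$-ball $B_1(\gamma)$.

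The key observation is that any $v \in B_1(\gamma)$ is fixed by $D_\gamma^n$ (either $v = \gamma$ or $v$ is disjoint from $\gamma$). Given a realizing geodesic $v_0, v_1, \ldots, v_i = v, \ldots, v_k$ of length $k = d_H(f D_\gamma^n)$, I would form the new path $v_0, v_1, \ldots, v_i, D_\gamma^n v_{i+1}, \ldots, D_\gamma^n v_k$ by applying $D_\gamma^n$ only to the suffix. Consecutive disjointness is preserved (since $D_\gamma^n$ fixes $v_i = v$ and acts as a simplicial isometry of $\mathcal{C}(\Sigma)$), and the endpoints are $v_0 \in \mathcal{D}_A$ and $D_\gamma^n v_k \in f^{-1}\mathcal{D}_B$. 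Hence $d_H(f) \leq k$, so $d_H(f D_\gamma^n) \geq d_H(f) \geq d$.

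The main obstacle is the edge case in which $\gamma$ is not disk-bounding but sits in the $1$-neighborhood of one of the disk sets (say $d(\gamma, \mathcal{D}_A) = 1$). Then $\gamma$ is not disk-busting on the $A$-side, and $\pi_\gamma(\mathcal{D}_A)$ can be unbounded, so bounded geodesic image does not directly apply. The $B$-side projection is still bounded, since $d(\gamma, f^{-1}\mathcal{D}_B) \geq d_H(f) - 1 \geq 2$ for $d \geq 3$, but the asymmetry precludes the main argument. This case would need to be handled by a refined analysis: any realizer $\alpha^* \in \mathcal{D}_A$ of the minimum that happens to be disjoint from $\gamma$ is fixed by $D_\gamma^n$ and gives distance $\geq d_H(f)$ directly, so a pathological realizer must intersect $\gamma$; the geodesic's annular projection, constrained on the $B$-side and by the bounded growth of projections across consecutive disjoint curves, should leave only finitely many exceptional $n$.
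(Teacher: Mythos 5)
You have the right global structure: openness is the content, density follows from non-emptiness plus Proposition~\ref{prop:topology} (any thick set is dense), and the non-trivial work is to show that $d_H(fD_\gamma^n)\geq d_H(f)$ for all but finitely many $n$. Your treatment of the easy cases matches the paper's. When $\gamma$ bounds a disk in one handlebody, $D_\gamma^n$ preserves a disk set and the distance is unchanged. When $\gamma$ is disk-busting on both sides, Masur--Schleimer bounds both annular projections, bounded geodesic image forces every realizing geodesic to hit $B_1(\gamma)$, and bending the geodesic at a vertex fixed by $D_\gamma^n$ gives $d_H(fD_\gamma^n)\geq d_H(f)$; this is exactly the paper's Proposition~\ref{twocurves} together with the opening paragraph of the proof of Lemma~\ref{maintheorem}.

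The gap is precisely where you flag it, and it is not a small refinement. When $d(\gamma,A)=1$ with $\gamma\notin A$, the set $A$ has \emph{unbounded} diameter in $\pi_\gamma$ (Theorem~\ref{masurschleimer}), so for every $n$ there may be a disk curve $a_n\in A$ whose annular coordinate tracks $-n$, and a potential realizing geodesic from $a_n$ to $D_\gamma^{-n}f^{-1}(B)$ could have small annular projection at both endpoints and never approach $B_1(\gamma)$. Nothing in ``bounded growth of projections across consecutive disjoint curves'' rules this out, because that Lipschitz property only controls the change along the geodesic, not the coordinates of the endpoints, and the endpoint in $A$ is the one running away. Your sketch stays entirely inside the single annular projection $\pi_\gamma$, and within that projection alone the obstruction cannot be resolved. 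The paper's resolution is to change subsurface: it proves (Lemma~\ref{lem:largeY}) that any $a\in A$ far from the base disks in $\pi_\gamma$ must, because it is built from band sums of compressing disks, also have a large projection to some \emph{second} subsurface $Y$ disjoint from $\gamma$, and moreover $Y$ contains either a curve $t_i$ cobounding an annulus with $\gamma$ in $H_A$ or an element of $A$ (Corollary~\ref{corollary}). Since $D_\gamma^{\pm n}$ fixes $Y$ pointwise, that large projection survives twisting; Lemma~\ref{distance3} (using $d(A,B)\geq 3$) bounds $B$'s projection to $Y$; and bounded geodesic image applied to $Y$ rather than $\mathcal{A}_\gamma$ forces the geodesic through a vertex disjoint from $\partial Y$, after which the geodesic can be modified as in the easy case. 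Establishing Lemma~\ref{lem:largeY} in turn requires the whole apparatus of $c$-twisting band sums, the decomposition $A=s_1\cup s_2\cup s_3$, Lemma~\ref{ctwistinglemma}, Lemma~\ref{separationcor}, and the intersection-growth estimate of Lemma~\ref{intersectionnumber}; none of this is suggested by, or derivable from, the single-annulus analysis in your sketch. So the proposal is correct and essentially the same as the paper on the generic case, but the edge case you defer to ``refined analysis'' is the theorem, and your proposed route through $\pi_\gamma$ alone would not close it.
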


While most of the results in this paper follow almost immediately from the definitions or from classical results, the proof of a lemma needed for these last two theorems is quite technical.\\

On page 3 of \cite{R14b}, a list of properties of elements of subgroups of mapping class groups represented by random long words in a symmetric generating set is given. On page 4 of the same paper, a list of properties of random Heegaard splittings according to the Dunfield-Thurston model is given. The properties investigated for genericity in this paper are based on these two lists. Results in this paper should be compared with theorems in \cite{R14b} and the extensive list of references given therein. Given the amount of literature available on the subject, the author apologises that no attempt is made here to do justice to related results in different frameworks.\\

As a final comment, the notion of genericity developed here has advantages and disadvantages. The measure theoretic definition of genericity a priori depends on the choice of a finite generating set of $\mcg.$ Genericity of a property in that setting is often shown for one generating set only, such as the set of Humphries generators. It follows that the topological notion of genericity is more canonical, as it is less dependent on such choices. In addition, it is suited to formulating problems about subgroups of mapping class groups. These properties can be understood in terms of subset topologies. An example of such a problem is the well known Ivanov conjecture, \cite{Ivanov}. When $\Sigma$ has genus at least 3, the Ivanov conjecture predicts that any finite index subgroup $\Gamma$ of $\mcg$ satisfies $H_{1}(\Gamma;\mathbb{R})=0$.\\

On the other hand, the measure theoretic version is better suited to asking questions about the asymptotic growth of invariants (stretch factor, volume of the mapping torus, etc...) of ``generic mapping classes''. It is possible to simply study the average value of the invariant of the $n$-step random walk $W_n$. \\

The paper is organised as follows: In Section \ref{sec:groups}, topologies $\mathcal{O}_{G,S}$ on groups $G$ equipped with a generating set $S$ are defined. The general properties of those topologies are studied and  it is shown that the twist-cofinite topology is obtained as an example of such a topology, taking $G=\mcg$ and $S$ to be the set of all Dehn-twists. Theorem \ref{thm:topology} and Proposition \ref{prop:topology} are corollaries of this more general theory. In Section \ref{sec:genericMCG}, generic properties of elements of $\mcg$ are discussed, and Theorem \ref{thm:pseudoAnosov} is proven. Sections \ref{sec:splitting} and \ref{sec:final} deal with properties of $3$-manifolds obtained from generic Heegaard splittings. The proof of Theorem \ref{thm:Hempel} requires concepts such as subsurface projections, due to Masur and Minsky, and properties of handlebody sets, due to Masur and Schleimer. This background material is given in Subsections \ref{sec:handlebody} and \ref{sec:subsurface}, as well as a crucial property of bounded and unbounded diameter subsurface projections of handlebody sets. The remainder of Section \ref{sec:final} deals mainly with showing the existence of certain large subsurface projections needed to obtain distance bounds in the proof of Theorem \ref{thm:Hempel}.

%%%%%%%%%%%%%%%
\subsection*{Acknowledgments} 
As noted above, the definition of the twist cofinite topology comes from Renaud Detcherry, and the first two subsections of this paper were also based on  \cite{Detcherry},  with the author's permission. The author would also like to thank Saul Schleimer for helpful discussions and an anonymous Referee for pointing out a number of useful references and devoting considerable time to writing a helpful commentary that led to improved clarity of exposition.

%%%%%%%%%
\section{A topology on groups with generating sets}
\label{sec:groups}
Let $G$ be a group and $S$ be a generating set for $G.$ For $g \in G,$ let $L_g$ be the left multiplication:
$$\begin{array}{rccl}L_g: & G & \longrightarrow & G
\\ & h & \longrightarrow & gh
\end{array}$$
and $R_g$ be the right multiplication:
$$\begin{array}{rccl}R_g: & G & \longrightarrow & G
\\ & h & \longrightarrow & hg.
\end{array}$$
Let also $s :G\rightarrow G$ be the map such that $s(g)=g^{-1}.$
Two topologies on $G$ are defined:
\begin{defn}\label{def:topo}Let
$$\mathcal{O}_{G,S}^l=\lbrace U \subset G \ | \ \forall g \in U ,\forall x \in S, \text{ For all except at most finitely many } n \in \Z \ , \ g x^n \in U\rbrace$$
and 
$$\mathcal{O}_{G,S}^r=\lbrace U \subset G \ | \ \forall g \in U, \forall x \in S, \text{For all except at most finitely many }  n \in \Z \  \ x^n g \in U\rbrace.$$
\end{defn}

Let $\mathcal{O}_{G,S}=\mathcal{O}_{G,S}^r \cap \mathcal{O}_{G,S}^l.$

\begin{thm} 
\label{generaltopologytheorem}
The sets $\mathcal{O}_{G,S}^l$ (resp. $\mathcal{O}_{G,S}^r$) determine a topology on $G$ for which the operator $L_g$ (resp. $R_g$)  is continuous for any $g \in G$. In addition, $G$ is a topological group for the topology $\mathcal{O}_{G,S}$, and $G$ is connected for all three topologies.
\end{thm}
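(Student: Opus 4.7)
The argument splits into checking four assertions: that $\mathcal{O}_{G,S}^l$ and $\mathcal{O}_{G,S}^r$ are topologies, that $L_g$ and $R_g$ are continuous in their respective topologies, that $G$ is a topological group for $\mathcal{O}_{G,S}$, and that $G$ is connected for all three topologies.

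The topology axioms for $\mathcal{O}_{G,S}^l$ are routine. Both $\emptyset$ and $G$ vacuously belong. Arbitrary unions work because any $g \in \bigcup_\alpha U_\alpha$ lies in some open $U_{\alpha_0}$, whose defining condition passes up to the union. Finite intersections work because a finite union of finite subsets of $\Z$ is finite: for $g \in U_1 \cap \cdots \cap U_k$ and $x \in S$, the set of $n$ with $gx^n \notin U_i$ is finite for each $i$, so its union over $i$ is finite. The same holds for $\mathcal{O}_{G,S}^r$, and $\mathcal{O}_{G,S}$ is an intersection of two topologies. For continuity of $L_g$ in $\mathcal{O}_{G,S}^l$, I need $L_g^{-1}(U) = g^{-1}U \in \mathcal{O}_{G,S}^l$ when $U$ is: if $h \in g^{-1}U$ then $gh \in U$, so $(gh)x^n = g(hx^n) \in U$ cofinitely often in $n$, equivalently $hx^n \in g^{-1}U$. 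The $R_g$/$\mathcal{O}_{G,S}^r$ case is entirely symmetric.

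For the topological group structure on $\mathcal{O}_{G,S}$, continuity of inversion $s(g) = g^{-1}$ is a short calculation: for $U \in \mathcal{O}_{G,S}$ and $h \in s^{-1}(U) = U^{-1}$, the $\mathcal{O}_{G,S}^r$-condition applied to $h^{-1} \in U$ gives $x^{-n} h^{-1} \in U$ cofinitely in $n$, which rewrites to $hx^n \in U^{-1}$; symmetrically for $x^n h \in U^{-1}$ from $\mathcal{O}_{G,S}^l$, so $U^{-1} \in \mathcal{O}_{G,S}$. The main technical obstacle I anticipate is joint continuity of multiplication. My plan is first to upgrade the separate continuity of $L_g$ and $R_g$ to continuity on $\mathcal{O}_{G,S}$ itself, which requires combining both halves of the definition rather than using just one, and then to reduce via translation invariance to the local statement at the identity: for every open $U \ni e$ in $\mathcal{O}_{G,S}$, there exists an open $V \ni e$ with $V \cdot V \subset U$. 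Here the $\mathcal{O}_{G,S}^l$ half of the definition controls tails of the form $v_1 x^n$ while the $\mathcal{O}_{G,S}^r$ half controls $x^n v_2$, and they must be combined to rule out the pairs $(v_1, v_2) \in V \times V$ with $v_1 v_2 \notin U$.

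For connectedness, suppose $U \ne \emptyset$ is clopen in $\mathcal{O}_{G,S}^l$; fix $h \in U$ and $x \in S$. Openness of $U$ at $h$ gives $hx^n \in U$ cofinitely in $n$. If $hx^{n_0} \in G \setminus U$ for some $n_0$, openness of $G \setminus U$ applied at $hx^{n_0}$ with generator $x$ would force $hx^{n_0+m} \in G \setminus U$ cofinitely in $m$, contradicting the previous sentence. Hence the entire $x$-orbit of $h$ lies in $U$, and iterating the same argument shows $U$ is closed under right multiplication by any word in $S \cup S^{-1}$; since $S$ generates $G$, this gives $U = G$. The verbatim argument proves connectedness in $\mathcal{O}_{G,S}^r$ and hence in $\mathcal{O}_{G,S}$.
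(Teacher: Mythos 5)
Your arguments for the topology axioms, for continuity of $L_g$ in $\mathcal{O}_{G,S}^l$ and of $R_g$ in $\mathcal{O}_{G,S}^r$, for continuity of inversion on $\mathcal{O}_{G,S}$, and for connectedness are all correct and match the paper's proof essentially step for step; the clopen/orbit argument for connectedness is the same.

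The genuine issue is precisely the step you flag as ``the main technical obstacle'': joint continuity of multiplication $G\times G\to G$. Your proposal only sketches a plan (find $V\ni e$ with $V\cdot V\subseteq U$; upgrade $L_g,R_g$ to continuity on $\mathcal{O}_{G,S}$ itself) and never carries it out. It cannot be carried out, because the claim is false in general. Note first that the paper's own proof never addresses joint continuity either: it verifies only that $L_g$, $R_g$, and $s$ are continuous and then asserts ``topological group.'' For a concrete failure, take $G=\mathbb{Z}$ and $S=\{1\}$, for which $\mathcal{O}_{G,S}$ is exactly the cofinite topology on $\mathbb{Z}$. The anti-diagonal $\{(k,-k):k\in\mathbb{Z}\}$ is the preimage of $\{0\}$ under addition, but it is not closed in the product of two cofinite topologies: any basic open box $U\times V$ with $U,V$ cofinite meets it, since $U\cap(-V)$ is cofinite and hence nonempty. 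So the preimage of the open set $\mathbb{Z}\setminus\{0\}$ is not open, and addition is not jointly continuous. Thus ``topological group'' in Theorem~\ref{generaltopologytheorem} can only be read in the weaker sense that translations and inversion are continuous (a quasi-topological group). Do not attempt to prove joint continuity; also be aware that even the continuity of $L_g$ and $R_g$ on the \emph{intersection} topology $\mathcal{O}_{G,S}$ is asserted without proof in the paper, and it does not follow formally from the continuity of $L_g$ on $\mathcal{O}_{G,S}^l$ and of $R_g$ on $\mathcal{O}_{G,S}^r$ alone, since one needs $g^{-1}U$ to be $\mathcal{O}_{G,S}^r$-open when $U$ is, which involves conjugates $gxg^{-1}$ that need not lie in $S$.
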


The \textit{left (resp. right) cyclically cofinite topology} on $G$ relative to the generating set $S$ refers to the set of open sets $\mathcal{O}_{G,S}^l$ (resp. $\mathcal{O}_{G,S}^r$). The set of open sets $\mathcal{O}_{G,S}$ determines the \textit{cyclically cofinite topology} on $G$ relative to $S.$\\

The motivation for those names is that the intersection of an open set in $\mathcal{O}_{G,S}^l$ with a left coset of a cyclic subgroup of $G$ generated by an element of $S$ is either empty or cofinite.\\

In the proof of Theorem \ref{generaltopologytheorem}, the assumption that $S$ is a generating set is only used to show connectivity.

\begin{proof}
It will be shown that $\mathcal{O}_{G,S}^l$ is a topology; the case of $\mathcal{O}_{G,S}$ is analogous.\\

The empty set and $G$ are both in $\mathcal{O}_{G,S}^l.$ Suppose $U_1$, $U_2\in \mathcal{O}_{G,S}^l$, $g\in U_1\cap U_2,$ and let $x$ be in $S.$
It follows that $g x^n \in U_1$ except for at most finitely many $n \in \Z,$ and $g x^n \in U_2$ except for at most finitely many $n\in \Z.$ Consequently, $g x^n\in U_1 \cap U_2$ except for at most finitely many $n \in \Z.$ Therefore $U_1\cap U_2 $ is in $\mathcal{O}_{G,S}^l.$\\

Now it will be shown that an arbitrary union of open sets is open. Let $(U_i)_{i \in I}$ be a family of elements of $\mathcal{O}_{G,S}^l,$ and $U=\underset{i \in I}{\bigcup} U_i.$ If $g \in U$ and $x \in S,$ then $g\in U_i$ for some $i,$ and $g x^n \in U_i$ for all $n\in \Z$ except at most finitely many. In particular $g x^n \in U$ for all but finitely many $n.$ Therefore $\mathcal{O}_{G,S}^l$ is a topology.\\

It will now be shown that for any $g \in G,$ the operator $L_g$ is continuous (in the topology $\mathcal{O}_{G,S}^l$), or equivalently, for any open set $U,$ that $g^{-1}U$ is also open. Let $g^{-1}h \in g^{-1}U.$ As $U$ is open, for any $x \in S,$ all but finitely many $h x^n$ are in $U$ and all but finitely many $g^{-1}hx^n$ are in $g^{-1}U.$ Therefore $g^{-1}U$ is open.\\

Similarly, $R_g$ is continuous on $G$ for the topology $\mathcal{O}_{G,S}^r.$\\

To show connectivity, assume that $U$ is a nonempty open and closed subset of $G$ for the topology $\mathcal{O}_{G,S}^l.$ If $g\in U$ and $x\in S,$ then since $U$ is open, $g x^n \in U$ for all but at most finitely many $n$. In fact, $g x^n \in U$ for all $n\in \Z$. To see why, note that if $g x^i \in G\setminus U$ for some $i,$ since $G\setminus U$ is also open, $g x^n \in G\setminus U$ for all but finitely many $n\in \Z,$ hence one can find an integer $n$ such that $g x^n\in U \cap (G\setminus U)=\emptyset,$ giving a contradiction. As this is true for any $x$ in a generating set $S$, $U$ must be all of $G$. It follows that $G$ is connected for the topology $\mathcal{O}_{G,S}^l,$ and similarly also for the topology $\mathcal{O}_{G,S}^r.$\\

For the intersection topology $\mathcal{O}_{G,S},$ $G$ will also be connected and for any $g\in G,$ both $L_g$ and $R_g$ are continuous. To show that $G$ with the topology $\mathcal{O}_{G,S}$ is a topological group, it remains to show that the inversion map $s:G\rightarrow G$ is continuous, or equivalently, that for any open $U \in \mathcal{O}_{G,S},$  $U^{-1}\in \mathcal{O}_{G,S}.$\\

If $U \in \mathcal{O}_{G,S},$ $g^{-1} \in U^{-1}$ and $x\in S$, then $g x^n \in U$ and $x^n g \in U$ for all but finitely many $n\in \Z.$ It follows that $x^{-n}g^{-1}\in U^{-1}$ and $g^{-1}x^{-n} \in U^{-1}$ for all but finitely many $n\in \Z,$ hence $U^{-1} \in \mathcal{O}_{G,S}.$ Therefore $s$ is continuous and $G$ is a topological group, for the topology $\mathcal{O}_{G,S}.$
\end{proof}

Having constructed those topologies, it remains to be seen that they are interesting topologies. At the very least, one would want them to be non-trivial. It will be shown in Section \ref{sec:genericMCG} and \ref{sec:splitting} that in the example in which $G=\mcg$ for some compact connected oriented surface $\Sigma$ with $S$ being the set of all Dehn-twists, there are many interesting open sets. However, at this level of generality, the non-triviality of the topology $\mathcal{O}_{G,S}$ depends on the properties of the generating set $S,$ in particular the orders of elements in $S:$
\begin{prop}\label{prop:order} Let $G$ be a group and $S$ a generating set for $G.$
\begin{enumerate}
\item{If the generating set consists only of elements of infinite order, then $\mathcal{O}_{G,S}^l,\mathcal{O}_{G,S}^r$ and $\mathcal{O}_{G,S}$ are all finer than the cofinite topology on $G$.}
\item{In addition, if all pairs $x\neq y \in S$ satisfy $\langle x\rangle \cap \langle y \rangle =\lbrace 1 \rbrace,$ then $\mathcal{O}_{G,S}^l,\mathcal{O}_{G,S}^r$ and $\mathcal{O}_{G,S}$ are strictly finer.}
\item{If, on the other hand, the generating set consists only of elements of finite order, $\mathcal{O}_{G,S}^l,\mathcal{O}_{G,S}^r,\mathcal{O}_{G,S}$ are all the trivial topology on $G.$}
\end{enumerate}
\end{prop}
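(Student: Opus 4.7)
The plan is to treat the three bullets separately, working throughout with the left cyclically cofinite topology $\mathcal{O}_{G,S}^l$; the arguments for $\mathcal{O}_{G,S}^r$ are symmetric, and the conclusions for $\mathcal{O}_{G,S} = \mathcal{O}_{G,S}^l \cap \mathcal{O}_{G,S}^r$ follow at once since the intersection of two open sets is open in both.

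For the first bullet, I would show directly that every cofinite set $U = G \setminus F$ lies in $\mathcal{O}_{G,S}^l$. Given $g \in U$ and $x \in S$, the map $n \mapsto gx^n$ is injective because $x$ has infinite order, so at most $|F|$ values of $n$ can satisfy $gx^n \in F$. Thus cofinite sets are open, and $\mathcal{O}_{G,S}^l$ refines the cofinite topology.

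For the second bullet, the task is to exhibit an open set that is not cofinite. Assuming $|S| \geq 2$ (otherwise the hypothesis is vacuous and $G \cong \Z$, for which the topology really coincides with the cofinite topology), pick distinct $x, y \in S$ and set $U = G \setminus \langle y \rangle$. Then $U$ is not cofinite since $\langle y \rangle$ is infinite, and $U$ is nonempty since $\langle x \rangle \cap \langle y \rangle = \lbrace 1 \rbrace$ and $x \neq 1$ force $x \in U$. To verify openness, I would fix $g \in U$ and $z \in S$ and split into two cases. If $z = y$, then $gz^n \in \langle y \rangle$ would force $g \in \langle y \rangle$, contradicting $g \in U$, so no bad $n$ exists. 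If $z \neq y$, then comparing two hypothetical solutions $gz^n, gz^{n'} \in \langle y \rangle$ gives $z^{n-n'} \in \langle z \rangle \cap \langle y \rangle = \lbrace 1 \rbrace$, forcing $n = n'$ by the infinite order of $z$. I expect this construction to be the main obstacle: the naive attempt of removing only $\langle y \rangle \setminus \lbrace 1 \rbrace$ fails because then the identity lies in $U$ and openness at $1$ along $y$ would require cofinitely many powers of $y$ to remain in $U$, contradicting that they had been deleted. Hence it is essential to place the identity itself in the complement.

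For the third bullet, suppose every $x \in S$ has finite order $d(x)$. Then for any $g$, the sequence $(gx^n)_{n \in \Z}$ is periodic of period $d(x)$, so it takes only finitely many values, each attained for infinitely many indices. If $U \in \mathcal{O}_{G,S}^l$ is nonempty and contains $g$, the defining cofiniteness condition at $(g,x)$ therefore forces $gx^n \in U$ for \emph{every} $n \in \Z$, so $g\langle x \rangle \subset U$. As this holds for every $x \in S$ and $S$ generates $G$, an induction on word length in $S$ yields $gG \subset U$, hence $U = G$. So $\mathcal{O}_{G,S}^l$ is the trivial topology.
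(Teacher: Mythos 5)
Your proof is correct and follows essentially the same line of argument as the paper: showing cofinite sets are open via injectivity of $n \mapsto gx^n$ for the first bullet, exhibiting $G \setminus \langle y \rangle$ as a non-cofinite open set for the second, and using periodicity to deduce $g\langle x\rangle \subset U$ and hence $U = G$ for the third. Your aside flagging the degenerate case $|S| = 1$ (where the hypothesis of the second bullet is vacuous yet the conclusion fails, since the topology on $\Z$ is exactly cofinite) is a legitimate observation that the paper's statement implicitly assumes $|S| \geq 2$.
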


\begin{proof}
Both claims will be proven only for $\mathcal{O}_{G,S}^l$; the case of $\mathcal{O}_{G,S}^r$ being similar, and the case of $\mathcal{O}_{G,S}$ being a consequence of the first two cases by taking the intersection.\\

Assume first that $S$ contains only elements of infinite order. It is necessary to show that for any finite subset $V\subset G,$ the set $G\setminus V$ is open. Let $h \in G\setminus V$ and $x\in S.$ As $x$ has infinite order in $G,$ the elements $h x^n$ are all different, therefore all but at most finitely many are in $G\setminus V.$ Hence $G\setminus V$ is open, and $\mathcal{O}_{G,S}^l$ is finer than the cofinite topology on $G.$\\

Now assume also that $\langle x \rangle \cap \langle y \rangle =\lbrace 1 \rbrace$ for any two distinct elements of $S.$ It will now be shown that for any generator $x\in S,$ $G\setminus \langle x \rangle \in \mathcal{O}_{G,S}^l.$\\

If $g \in G\setminus \langle x \rangle$ then $gx^n \notin \langle x \rangle$ for any $n\in \Z.$ Suppose $y \in S$ is another generator. It follows that $gy^{n} \in \langle x \rangle$ for at most one integer $n\in \Z.$ If $gy^i, gy^j\in \langle x \rangle$ for $i\neq j,$ then $y^{j-i} \in \langle x \rangle \cap \langle y \rangle.$ But $y^{j-i}\neq 1$ as $y$ has infinite order which contradicts the fact that $\langle x \rangle \cap \langle y \rangle= \lbrace 1 \rbrace.$ \\

Finally, assume that $S$ consists only of finite order elements. Let $U \in \mathcal{O}_{G,S}^l$ be a non-empty open set, and let $g\in U$ and $x\in S.$ Assume that $x^d=1.$ It follows that all but at most finitely many elements $g x^{1+kd}$ where $k \in \Z$ are in $U.$ But $g x^{1+kd}=g x$ so $gx \in U.$ Similarly, one shows that $gx^{-1} \in U.$ \\

Therefore $U$ is closed under multiplication on the right by any element in $S$ or any inverse of an element in $S.$ As the set $S$ is a generating set of $G,$ this implies that that $U=G.$ Consequently $\mathcal{O}_{G,S}^l$ is the trivial topology in this case.
\end{proof}

The cyclically-cofinite topologies relative to a generating set are compatible with morphisms of groups, as the following proposition shows:

\begin{prop}\label{prop:morphism}Let $\phi :G \rightarrow H$ be a surjective morphism of groups and $S$ be a generating set of $G$. It follows that $\phi(S)$ is a generating set for $H,$ and for the topologies $\mathcal{O}_{G,S}^l$ and $\mathcal{O}_{H,\phi(S)}^l$ (or 
with $\mathcal{O}_{G,S}^r$ and $\mathcal{O}_{H,\phi(S)}^r$, or $\mathcal{O}_{G,S}$ and $\mathcal{O}_{H,\phi(S)}$) the map $\phi$ is continuous.
\end{prop}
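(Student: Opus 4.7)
The proof breaks into two essentially independent assertions: that $\phi(S)$ generates $H$, and that $\phi$ is continuous for each of the three pairs of topologies. The first is immediate from surjectivity: any $h \in H$ may be written as $\phi(g)$ for some $g \in G$, and since $S$ generates $G$, we may express $g = x_1^{\varepsilon_1} \cdots x_k^{\varepsilon_k}$ with each $x_i \in S$ and each $\varepsilon_i = \pm 1$, so applying $\phi$ gives $h = \phi(x_1)^{\varepsilon_1} \cdots \phi(x_k)^{\varepsilon_k}$, exhibiting $h$ as a word in $\phi(S) \cup \phi(S)^{-1}$.

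For continuity with respect to the left topologies, the plan is to verify the defining condition directly on preimages. Take $U \in \mathcal{O}_{H,\phi(S)}^l$ and let $g \in \phi^{-1}(U)$ and $x \in S$. Since $\phi$ is a homomorphism, $\phi(gx^n) = \phi(g)\phi(x)^n$ for every $n \in \Z$. The element $\phi(g)$ lies in $U$ and $\phi(x)$ lies in $\phi(S)$, so by the definition of $\mathcal{O}_{H,\phi(S)}^l$ we have $\phi(g)\phi(x)^n \in U$ for all but finitely many $n \in \Z$. Translating through $\phi$, this means $gx^n \in \phi^{-1}(U)$ for all but finitely many $n$, which is exactly the membership condition for $\phi^{-1}(U) \in \mathcal{O}_{G,S}^l$. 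Hence $\phi$ is continuous.

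The right topology case is handled by the mirror argument, writing $\phi(x^n g) = \phi(x)^n \phi(g)$ and invoking the defining condition of $\mathcal{O}_{H,\phi(S)}^r$. Finally, continuity for the intersection topologies follows formally: if $U \in \mathcal{O}_{H,\phi(S)} = \mathcal{O}_{H,\phi(S)}^l \cap \mathcal{O}_{H,\phi(S)}^r$, then $U$ is open in each of the one-sided topologies on $H$, hence by the two previous cases $\phi^{-1}(U)$ is open in each of $\mathcal{O}_{G,S}^l$ and $\mathcal{O}_{G,S}^r$, and so it lies in their intersection $\mathcal{O}_{G,S}$.

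There is no real obstacle here; the entire statement is a bookkeeping check that the membership conditions defining the topologies are preserved under the homomorphism identity $\phi(g x^n) = \phi(g) \phi(x)^n$. The only subtlety worth flagging is that surjectivity of $\phi$ is used only to ensure $\phi(S)$ generates $H$; it is not needed for continuity, which would work for an arbitrary homomorphism as long as one topologises $H$ using a generating set containing $\phi(S)$.
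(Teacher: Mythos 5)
Your proof is correct and follows essentially the same route as the paper: verify the membership condition for $\phi^{-1}(U)$ directly using the homomorphism identity $\phi(gx^n)=\phi(g)\phi(x)^n$, with the right-hand and intersection cases handled by symmetry and formal intersection. The closing observation that surjectivity is only needed for $\phi(S)$ to generate $H$ is a small but accurate refinement not made explicit in the paper.
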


\begin{proof}
The set $\phi(S)$ is a generating set for $H$ as $\phi$ is a surjective morphism. Once again the statement will be proven for the left cyclically-cofinite topologies only.\\

Let $U\in \mathcal{O}_{H,\phi(S)}^l,$ and let $g \in \phi^{-1}(U).$ For any $x\in S,$ it holds that $\phi(gx^n)=\phi(g)\phi(x)^n \in U$ for all but finitely many $n\in \Z,$ as $U$ is open. Therefore $\phi^{-1}(U)\in \mathcal{O}_{G,S}^l,$ and $\phi$ is continuous. 
\end{proof}

When the generating set $S$ is closed under conjugation by an arbitrary element of $G,$ the above topologies are better behaved:

\begin{prop}\label{prop:normal_gen_set} Assume that $S$ is a generating set for a group $G$ that is closed under conjugation by any element of $G.$ In this case the topologies $\mathcal{O}_{G,S}^r, \mathcal{O}_{G,S}^l$ and $\mathcal{O}_{G,S}$ all coincide.

Also, the intersection of any two non-empty open subsets of $G$ is a non-empty open subset of $G$, i.e. the topologies $\mathcal{O}_{G,S}^r, \mathcal{O}_{G,S}^l$ and $\mathcal{O}_{G,S}$ are not Hausdorff.
\end{prop}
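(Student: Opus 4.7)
The plan is to treat the two assertions separately. For the equality of the three topologies, the key identity is
$$x^n g = g \, (g^{-1} x g)^n$$
for any $g \in G$, $x \in S$, and $n \in \Z$. Given $U \in \mathcal{O}_{G,S}^l$, $g \in U$, and $x \in S$, set $y := g^{-1} x g$. The conjugation-closure hypothesis guarantees $y \in S$, so by the definition of $\mathcal{O}_{G,S}^l$ we have $g y^n \in U$ for all but finitely many $n \in \Z$, and the identity above then forces $x^n g \in U$ for all but finitely many $n$. Hence $U \in \mathcal{O}_{G,S}^r$, and by the symmetric argument $\mathcal{O}_{G,S}^l = \mathcal{O}_{G,S}^r$. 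Since $\mathcal{O}_{G,S}$ is defined as their intersection, all three coincide.

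For the second assertion, let $U_1, U_2$ be non-empty open sets. Since $\mathcal{O}_{G,S}$ is a topology (Theorem \ref{generaltopologytheorem}), $U_1 \cap U_2$ is automatically open, so the task reduces to showing it is non-empty. Pick $g_1 \in U_1$, $g_2 \in U_2$, and express $g_1^{-1} g_2 = s_1^{\epsilon_1} s_2^{\epsilon_2} \cdots s_k^{\epsilon_k}$ with $s_i \in S$ and $\epsilon_i \in \{+1,-1\}$. The plan is to produce integers $n_1,\ldots,n_k$ such that the single element
$$h := g_1 \, s_1^{n_1} s_2^{n_2} \cdots s_k^{n_k}$$
lies in both $U_1$ and $U_2$. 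Membership in $U_1$ is arranged by applying the openness definition iteratively: the set of $n_1$ with $g_1 s_1^{n_1} \notin U_1$ is finite; for each good $n_1$ the set of bad $n_2$ is finite; and so on.

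The harder ingredient is to re-express $h$ as a product based at $g_2$ with exponents in generators still drawn from $S$, so that the same iterative argument applies on the $U_2$ side. Starting from $g_1 = g_2\, s_k^{-\epsilon_k} \cdots s_1^{-\epsilon_1}$, one combines $s_j^{-\epsilon_j}$ with the nearby $s_j^{n_j}$ into $s_j^{n_j-\epsilon_j}$ (they commute as powers of $s_j$) and then pushes this power leftward through $s_k^{-\epsilon_k},\ldots,s_{j+1}^{-\epsilon_{j+1}}$, which by conjugation-closure replaces $s_j$ by
$$\tilde s_j := (s_{j+1}^{\epsilon_{j+1}} \cdots s_k^{\epsilon_k})^{-1} s_j \, (s_{j+1}^{\epsilon_{j+1}} \cdots s_k^{\epsilon_k}) \in S.$$
After $k$ such steps one obtains the identity
$$h = g_2 \, \tilde s_1^{\,n_1 - \epsilon_1} \tilde s_2^{\,n_2 - \epsilon_2} \cdots \tilde s_k^{\,n_k - \epsilon_k},$$
and the iterative openness argument on $U_2$ now runs with the generators $\tilde s_j$. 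At each stage the $U_1$-bad and $U_2$-bad values of $n_j$ are each finite, so their union is finite and cofinitely many choices remain; picking any surviving tuple $(n_1,\ldots,n_k)$ places $h$ in $U_1 \cap U_2$.

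The main technical hurdle is verifying the algebraic rewriting displayed above; the conjugation-closure hypothesis is precisely what ensures each intermediate $\tilde s_j$ stays in $S$, and without it the openness of $U_2$ could not be applied directly. Once the rewriting is in hand, the rest of the argument reduces to the routine observation that a finite union of cofinite subsets of $\Z$ is cofinite, applied coordinate by coordinate.
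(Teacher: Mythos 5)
Your proof is correct and takes essentially the same approach as the paper's: the first half uses the identical conjugation identity $x^n g = g(g^{-1}xg)^n$, and the second half, though phrased as a single simultaneous choice of exponents $(n_1,\ldots,n_k)$ with the rewriting $h = g_2\,\tilde s_1^{n_1-\epsilon_1}\cdots\tilde s_k^{n_k-\epsilon_k}$ done upfront, amounts to the same cofinite-choice argument the paper carries out by inductively shortening the word one generator at a time and conjugating the remaining letters into $S$.
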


\begin{proof}
Let $U\in \mathcal{O}_{G,S}^l$ and let $g \in U.$ It holds that for any $x\in S,$ the element $g^{-1}xg\in S$ as $S$ is closed under conjugation. Therefore $x^n g=g (g^{-1}x g)^n \in U$ for all but at most finitely many $n\in \Z.$ This shows that $U \in \mathcal{O}_{G,S}^r$ and $\mathcal{O}_{G,S}^l \subset \mathcal{O}_{G,S}^r.$\\

Similarly, $\mathcal{O}_{G,S}^r \subset \mathcal{O}_{G,S}^l$ and hence $\mathcal{O}_{G,S}^l=\mathcal{O}_{G,S}^r=\mathcal{O}_{G,S}.$\\

Now let $U_1$ and $U_2$ be two non-empty open sets of $G.$ For $g\in U_1$ and $h\in U_2,$ as $S$ is a generating set, one can write:
$$h=g x_1^{i_1} x_2^{i_2} \ldots x_k^{i_k}$$
for some $k\geqslant 0,$ some $x_j \in S$ and $i_j \in \Z.$ It will be shown that one can find $g\in U_1$ and $h\in U_2$ such that $k$ can be taken to be zero which implies that $g=h \in U_1\cap U_2.$\\

Let $g\in U_1$ and $h\in U_2.$ Since $U_2$ is open, $h x_k^n \in U_2$ for all but finitely many $n\in \Z.$ Also $g x_k^m \in U_1$ for all but finitely many $m \in \Z.$ In particular, one can choose $n$ and $m$ such that $h x_k^n \in U_2,$ $gx_k^m \in U_1$ and $m=i_k+n.$ In this case
$$h x_k^n=(g x_k^m) x_k^{-m} x_1^{i_1} \ldots x_k^{i_k+n}=(g x_k^m) x_k^{-m} x_1^{i_1} \ldots x_k^{m}=(g x_k^m) y_1^{i_1} \ldots y_{k-1}^{i_{k-1}}$$
where $y_j=x_k^{-m}x_j x_k^m \in S$ because $S$ is closed under conjugation. Therefore it is possible to inductively decrease $k$ by $1$ until $k=0$ is obtained, showing that $U_1\cap U_2\neq \emptyset.$
\end{proof}

Proposition \ref{prop:normal_gen_set} implies that if $S$ is a generating set with the property of being closed under conjugation by $G,$ then any non-empty open set in $\mathcal{O}_{G,S}$ is dense.\\

As in the introduction, call a subset $V \subset G$ \textit{thick} if it has non-empty interior, and \textit{thin} if its complement is thick. Any thick set is dense because its interior is dense, and any intersection of two thick sets is thick as the intersection of their interiors is a non-empty open set.\\

It will sometimes be necessary to consider sets that are dense but not thick. An example will be given in Theorem \ref{thm:Betti}.

\begin{lem} 
\label{lem:notthick}
Let $G$ be a group and $S$ be a generating set for $G$ that is closed under conjugation. Assume in addition that $V$ is a non-empty subset of $G$ such that for any $g \in V,$ and any $x\in S,$ infinitely many $g x^n$ are in $V.$
Under these assumptions $V$ is dense in $G$ for the topology $\mathcal{O}_{G,S}$.
\end{lem}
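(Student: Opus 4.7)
The plan is to mimic the proof of Proposition \ref{prop:normal_gen_set}, since the conclusion I need is formally weaker than what is established there: I only need to show that every non-empty open set $U \in \mathcal{O}_{G,S}$ intersects $V$. Fix such a $U$ and pick $g_0 \in V$ and $h_0 \in U$. Since $S$ generates $G$, I would write
$$h_0 = g_0 x_1^{i_1} x_2^{i_2} \cdots x_k^{i_k}$$
for some $k \geq 0$, $x_j \in S$, $i_j \in \Z$, and argue by induction on the length $k$ that one can find replacements $g \in V$ and $h \in U$ joined by a word of length $0$ (i.e.\ $g=h$), which gives $g \in U \cap V$.

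For the inductive step, assume $k \geq 1$. For each integer $m$, setting $n = m - i_k$, the same manipulation as in Proposition \ref{prop:normal_gen_set} gives
$$h_0 x_k^n = (g_0 x_k^m)\, y_1^{i_1} \cdots y_{k-1}^{i_{k-1}}, \qquad y_j := x_k^{-m}\, x_j\, x_k^{m},$$
and the elements $y_j$ lie in $S$ because $S$ is closed under conjugation. So the task reduces to finding an integer $m$ satisfying both $g_0 x_k^m \in V$ and $h_0 x_k^{m - i_k} \in U$. The hypothesis on $V$ provides an infinite set of $m$'s with $g_0 x_k^m \in V$, while openness of $U$ in $\mathcal{O}_{G,S} = \mathcal{O}_{G,S}^l$ (using Proposition \ref{prop:normal_gen_set}) gives that all but finitely many $m$ satisfy $h_0 x_k^{m-i_k} \in U$. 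An infinite set minus a finite set is infinite, so a valid $m$ exists. The new pair $(g_0 x_k^m,\, h_0 x_k^{m - i_k})$ is joined by a word of length $k-1$, and crucially $g_0 x_k^m$ still lies in $V$, so the standing hypothesis on $V$ is preserved and the induction can continue until $k = 0$.

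There is not really a hard step here: the lemma is phrased precisely so that the ``infinitely many $n$'' condition on $V$ is the exact substitute for openness needed in Detcherry's length-reduction argument. The only point to verify is that the closure of $S$ under conjugation simultaneously (i) keeps the conjugated generators $y_j$ inside $S$, so the word length genuinely drops at each step, and (ii) identifies $\mathcal{O}_{G,S}$ with $\mathcal{O}_{G,S}^l$ via Proposition \ref{prop:normal_gen_set}, so that openness of $U$ immediately provides the needed cofinite set of good exponents on the right.
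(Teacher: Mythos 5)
Your proof is correct and follows essentially the same route as the paper's: mimic the length-reduction argument of Proposition~\ref{prop:normal_gen_set}, replacing one of the two cofinite exponent sets by the merely infinite set supplied by the hypothesis on $V$, and observe that the new base point stays in $V$ so the induction can continue. The only cosmetic difference is that you place the $V$-element on the left of the word decomposition while the paper places it on the right (the paper also has a harmless sign typo in its $n = m - i_k$, which should be $n = m + i_k$ in its labeling, but the argument is unaffected since a translate of an infinite set is still infinite).
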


\begin{proof}
As in the proof of Proposition \ref{prop:normal_gen_set}, consider $g \in U$, where $U$ is a non-empty open set and $h \in V$ and $V$ satisfies the hypothesis of the proposition. In this case
$$h=g x_1^{i_1} x_2^{i_2} \ldots x_k^{i_k}$$ for some $k\geqslant 0$ and $x_j \in S.$ Since $h\in V$ the set $T=\lbrace m \in \Z \ | h x_k^m \in V \rbrace$ is infinite. Moreover $g x_k^n \in U$ for all but finitely many $n\in \Z,$ and hence there is some $n$ in $\lbrace m-i_k \ | m\in T \rbrace$ such that $g x_k^n\in  U.$ As in the previous proof it follows that one could find another pair $g'\in U, h'\in V$ with $h'=g' y_1^{j_1} \ldots y_{k-1}^{j_{k-1}}$ with all $y_l$'s in $S.$ By induction, it follows that $U \cap V \neq \emptyset.$  Hence $V$ has non-empty intersection with any open set, i.e. $V$ is dense. 
\end{proof}

The twist-cofinite topology defined in Theorem \ref{thm:topology} is an example of such a topology, where $G=\mcg$ for some compact connected oriented surface $\Sigma,$ and $S$ is the set of all Dehn-twists. The set of all Dehn-twists generates $\mcg$ and is closed under conjugation; if $f \in \mcg$ and $D_{\gamma}$ is a Dehn-twist around a curve $\gamma$ on $\Sigma$ then $f \circ D_{\gamma} \circ f^{-1}=D_{f(\gamma)}.$ Therefore Theorem \ref{thm:topology} and Proposition \ref{prop:topology} are consequences of the more general Definition \ref{def:topo} and Proposition \ref{prop:normal_gen_set} above.   

%%%%%%%%%%%%%
\section{Generic properties of pseudo-Anosovs}
\label{sec:genericMCG}
In this section, Theorem \ref{thm:pseudoAnosov} will be obtained as a corollary of classical results.

\begin{thm*}[Theorem \ref{thm:pseudoAnosov} of the introduction]
The following subsets of $\mcg$ are non-empty and open, and hence thick:
\begin{enumerate}
\item{The set of pseudo-Anosov maps $f \in \mcg.$}
\item{The sets $\lbrace f \in \mcg \ \textrm{pseudo-Anosov} \ | \ \Vol (M_{f})>C \rbrace,$ where $M_f$ is the mapping torus of $f$ and $C>0.$}
\item{The sets $\lbrace f \in \mcg \ \textrm{pseudo-Anosov} \ | \ \lambda(f)>C \rbrace,$ where $\lambda(f)$ is the stretch factor of $f$ and $C>0.$}
\item{The set of pseudo-Anosovs that do not leave any integer homology class of curves on $\Sigma$ invariant.}
\end{enumerate}
\end{thm*}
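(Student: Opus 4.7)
The plan is to deduce the theorem from classical ingredients, with non-emptiness being nearly trivial and the main content being openness. Pseudo-Anosov classes exist by Thurston's classification, and for any pA $f$ the iterates $f^k$ satisfy $\lambda(f^k)=\lambda(f)^k$ and $\Vol(M_{f^k})=k\,\Vol(M_f)$, both tending to infinity, so the sets in (1)--(3) have non-empty realisations. For (4) one invokes the existence of pseudo-Anosovs whose homological action has no eigenvalue $1$, for instance via Thurston's construction applied to a filling pair chosen so that the induced action on $H_1(\Sigma,\Z)$ has irreducible characteristic polynomial not vanishing at $1$.

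The central classical input for openness is that for every pA $f$ and every essential simple closed curve $\gamma$, $f\circ D_\gamma^n$ is pseudo-Anosov for all but finitely many $n\in\Z$. I would prove this using the Masur--Minsky characterization of pA as the loxodromic isometries of the curve complex $\mathcal{C}(\Sigma)$, combined with the bounded geodesic image theorem: the annular subsurface projection about $\gamma$ is displaced by approximately $|n|$ under $D_\gamma^n$, so $d_{\mathcal{C}(\Sigma)}(\mu,(fD_\gamma^n)^k\mu)$ grows linearly in $k$ for large $|n|$, giving positive translation length on $\mathcal{C}(\Sigma)$ and hence a pseudo-Anosov. This settles (1). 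For (3), a quantitative refinement of the same estimate (or Penner's train-track computation) shows $\lambda(fD_\gamma^n)$ grows at least linearly in $|n|$, so $\lambda(fD_\gamma^n)>C$ eventually. For (2), one identifies $M_{fD_\gamma^n}$ with the $1/n$-Dehn filling along the new torus cusp of the cusped manifold $M_f\setminus(\gamma\times\{1/2\})$; this cusped manifold is hyperbolic by Thurston (since $M_f$ is hyperbolic and $\gamma$ is essential in a fiber), and Thurston's hyperbolic Dehn filling theorem yields $\Vol(M_{fD_\gamma^n})\to\Vol(M_f\setminus\gamma)\geq\Vol(M_f)>C$ as $|n|\to\infty$.

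Openness of (4) reduces to pure linear algebra. The Dehn twist acts on $H_1(\Sigma,\Z)$ as the symplectic transvection $D_{\gamma\ast}(v)=v+\langle v,[\gamma]\rangle[\gamma]$, so $D_{\gamma\ast}^n=I+n\,T_\gamma$ with $T_\gamma$ of rank at most one. Writing $(fD_\gamma^n)_*-I=(f_*-I)+n\,f_*T_\gamma$, the matrix determinant lemma expresses $\det((fD_\gamma^n)_*-I)$ as a polynomial in $n$ of degree at most one. If $f$ leaves no non-zero integer homology class invariant then $\det(f_*-I)\neq 0$, so this polynomial is non-zero at $n=0$ and hence at all but at most one $n\in\Z$; combined with the openness of the pseudo-Anosov locus from the previous paragraph this gives (4). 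The only substantial obstacle in the whole proof is the classical openness statement for the pseudo-Anosov locus (used in (1), (2), (3)); everything else, including the Dehn filling identification and the geometric verification that $M_f\setminus\gamma$ is hyperbolic, is a standard application of Thurston's machinery or elementary linear algebra.
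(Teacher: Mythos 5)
Your proof is correct and reaches the same conclusions, but by a different route for the key openness step. The paper treats items (1) and (3) as direct citations to Long--Morton \cite{LM86}: Theorem~A(i) gives that $fD_\gamma^n$ is pseudo-Anosov for all but finitely many $n$, and Theorem~C gives the linear-in-$|n|$ growth of the stretch factor; you instead propose to re-derive the first fact via the Masur--Minsky curve-complex characterization of pseudo-Anosov as loxodromic isometries plus the bounded geodesic image theorem, and the third via Penner-style train-track estimates. This is a legitimate modern alternative, but be aware it is not as immediate as your sketch suggests: the step from ``large annular projection displacement'' to ``positive asymptotic translation length of $fD_\gamma^n$ on $\mathcal C(\Sigma)$'' requires something like a Behrstock-inequality ping-pong argument, whereas the paper's citation is a closed-form classical result, so the paper's route is more economical while yours is more self-contained within curve-complex technology. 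For (2) the two arguments are essentially the same --- the paper's Neumann--Zagier Theorem 1A is precisely the quantitative form of Thurston's hyperbolic Dehn filling that you invoke --- and both of you gloss over the same point, namely that the drilled manifold $M_f\setminus\gamma$ is hyperbolic; this is true (one can deduce it from the hyperbolicity of infinitely many of the fillings $M_{fD_\gamma^n}$ together with the fact that a non-hyperbolic cusped manifold has only finitely many hyperbolic fillings), but neither the paper nor your sketch spells it out. For (4) your matrix-determinant-lemma formulation $\det\bigl((f_*-I)+n\,f_*T_\gamma\bigr)$ being affine in $n$ is a cleaner packaging of the same linear-algebra computation the paper does by choosing a symplectic basis adapted to $\gamma$, and it makes visible at a glance that there is at most one bad $n$; this is an improvement in exposition over the paper's slightly garbled ``acts trivially on homology'' phrasing.
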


\begin{proof}
The first part of the theorem follows directly from the following
\begin{thm}[Theorem A  part $i$ of \cite{LM86}]
Let $f:\Sigma\rightarrow \Sigma$ be a pseudo-Anosov map, and $c$ a simple curve on $\Sigma$. It follows that for all but at most finitely many $n$, the composition $D_{c}^{n}f$ is also pseudo-Anosov. In Theorem 0.1 of \cite{fathi}, this theorem was strengthened to the statement that $D_{c}^{n}f$ is pseudo-Anosov for all but at most 7 consecutive values of $n$.
\end{thm}

A Dehn surgery on a 3-manifold $M$ is a construction whereby the interior $\mathring T_{1}$ of a solid torus $T_{1}$ in the interor $\mathring M$ of $M$ is removed, and another solid torus $T_{2}$ is glued in via a homeomorphism $h:\partial T_{2}\rightarrow \partial T_{1}$. The 3-manifold $(M\setminus \mathring T_{1})\cup_{h}T_{2}$ is said to be obtained from $M$ by a Dehn surgery.\\

To prove the second part of the Theorem, note that $M_{fD_{c}^{n}}$ is obtained from $M_{f}$ by a Dehn surgery. The behaviour of volume under Dehn surgeries has been studied in \cite{NZ}, where it follows from Theorem 1A that  for sufficiently large $|n|$
\begin{equation*}
\Vol (M_{f})<\Vol (M_{fD_{c}^{n}}).
\end{equation*}
Similarly for sufficiently large $|n|$, 
\begin{equation*}
\Vol (M_{f})<\Vol (M_{D_{c}^{n}f}).
\end{equation*}

The third part of the theorem also follows directly from a Theorem of \cite{LM86}. It is a consequence of Theorem C of \cite{LM86} that there are positive constants $K$, $L_{1}$ and $L_{2}$ such that 
\begin{equation*}
|n|L_{1}-K\leq \lambda(D_{c}^{n}f)\leq L_{2}|n|+K
\end{equation*}

Denote by $[c]$ the integer homology class in $H_{1}(\Sigma; \mathbb{Z})$ with representative $c$. If $[c]=0$, the action of the mapping class $f$ on homology is identical to that of $fD_{c}^{n}$ or $D_{c}^{n}f$. If $c$ is a nonseparating curve, choose a set of curves containing $c$ that determine a symplectic basis for $H_{1}(\Sigma; \mathbb{Z})$, and denote by $d$ the unique curve in the set intersecting $c$. Since the action of the mapping class group on $\pi_{1}(\Sigma)$ defines an action on $H_{1}(\Sigma; \mathbb{Z})$, the action of a mapping class on $H_{1}(\Sigma; \mathbb{Z})$ is determined by its action on the curves making up the chosen basis. It follows immediately that there is at most one value of $n$ for which $fD_{c}^{n}$ acts trivially on homology. Replacing the set of curves making up the basis by their images under $f^{-1}$, the same argument shows the same is true for $D_{c}^{n}f$. Therefore, the set of pseudo-Anosovs that do not leave any integer homology class of curves on $\Sigma$ invariant is open. This set is known to be non-empty, so the theorem follows.
\end{proof}

Since Dehn twists are not of finite order in the mapping class group, it follows immediately from Proposition \ref{prop:order} that the subgroups of the mapping class group in Theorem \ref{thm:pseudoAnosov} are also generic in finite index subgroups of mapping class groups. In \cite{Maher}, it was shown that there is a sense in which pseudo-Anosovs are generic in every finitely generated subgroup of $\mcg$ containing a pseudo-Anosov element. Apart from this, little seems to be known about subset topologies.

%%%%%%%%%%%

%\section{The topology of subgroups of $\mcg$}
%\label{sec:subgroups}

%%%%%%%
\section{Heegaard splittings of $3$-manifolds with generic gluing maps}
\label{sec:splitting}
In this section, the homology of 3-manifolds described as Heegaard splittings with generic gluing maps will be studied. A reference for any facts about Heegaard splittings used here is \cite{notes}. \\

Suppose a 3-manifold $M$ is decomposed as a Heegaard splitting $N_{f}=H_{A}\cup H_{B}$ along a surface $\Sigma$. The homology of $M$ can be computed from the Heegaard splitting, using Mayer-Vietoris. If $g$ is the genus of $\Sigma$, then
\begin{equation*}
\label{MV}
H_{2}(N_{f}; \mathbb{Z})\rightarrow H_{1}(\Sigma; \mathbb{Z})=\mathbb{Z}^{2g}\rightarrow H_{1}(H_{A}; \mathbb{Z})\bigoplus H_{1}(H_{B};\mathbb{Z}) =\mathbb{Z}^{g}\bigoplus \mathbb{Z}^{g}\rightarrow H_{1}(N_{f};\mathbb{Z})
\end{equation*}
To compute $H_{2}(N_{f}; \mathbb{Z})$ and $H_{1}(N_{f}; \mathbb{Z})$ it is therefore necessary to calculate the $2g\times 2g$ matrix $K(f)$ describing the map $H_{1}(\Sigma; \mathbb{Z})\rightarrow H_{1}(H_{A}; \mathbb{Z})\bigoplus H_{1}(H_{B};\mathbb{Z})$.\\

Let $\{x_{1},\ldots,x_{2g}\}$ be a generating set for $\pi_{1}(\Sigma)$. For $H_{A}$ let $a_{1}, \ldots, a_{g}$ be a set of curves on $\Sigma$ that bound a set of disks in $H_{A}$, such that when $H_{A}$ is cut along these disks, a ball is obtained. The curves $\{b_{1}, \ldots, b_{g}\}$ are defined similarly for $H_{B}$. Each curve $a_{i}$ or $b_{i}$ for $i=1,\ldots, g$ is described by a word in the generators $\{x_{1},\ldots,x_{2g}\}$. The first row of $K(f)$ is the vector representing $[a_{1}]$ in the basis $\{[x_{1}],\ldots,[x_{2g}]\}$, the second row is the vector representing $[a_{2}]$ followed by vectors representing $[a_{3}],\ldots, [a_{g}], [b_{1}], \ldots, [b_{g}]$ in the given order.

\begin{lem}[Lemma 3.31 of \cite{notes}]
\label{lem:rank}
$H_{1}(N_{f};\mathbb{Z})$ is finite iff the determinant of $K(f)$ is nonzero. In addition, if $H_{1}(N_{f};\mathbb{Z})$ is finite then the order of  $H_{1}(N_{f};\mathbb{Z})$ is equal to the absolute value of the determinant of $K(f)$.
\end{lem}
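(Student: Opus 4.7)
The plan is to identify $H_1(N_f;\mathbb{Z})$ with the cokernel of an integer linear map $\mathbb{Z}^{2g} \to \mathbb{Z}^{2g}$ whose matrix is (up to transpose) the matrix $A=A(f)$, and then invoke the Smith normal form.

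First I would establish the presentation
\[
H_1(N_f;\mathbb{Z}) \;\cong\; \mathbb{Z}^{2g}\big/\bigl\langle [a_1],\ldots,[a_g],[b_1],\ldots,[b_g]\bigr\rangle.
\]
There are two natural routes, and either suffices. The first is via Mayer--Vietoris: in the exact sequence
\[
H_1(\Sigma;\mathbb{Z}) \xrightarrow{\;\phi\;} H_1(H_A;\mathbb{Z})\oplus H_1(H_B;\mathbb{Z}) \longrightarrow H_1(N_f;\mathbb{Z}) \longrightarrow H_0(\Sigma;\mathbb{Z}) \xrightarrow{\;j\;} H_0(H_A)\oplus H_0(H_B),
\]
the map $j$ sends a generator to $(1,-1)$ and is therefore injective, so $H_1(N_f;\mathbb{Z}) \cong \operatorname{coker}\phi$. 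Since the kernel of $H_1(\Sigma;\mathbb{Z}) \to H_1(H_A;\mathbb{Z})$ is generated by $[a_1],\ldots,[a_g]$ and the analogous statement holds for $H_B$ and the $[b_i]$, a straightforward diagram chase identifies $\operatorname{coker}\phi$ with the quotient displayed above. The second route is even more direct: the Heegaard splitting gives a presentation of $\pi_1(N_f)$ with generators $x_1,\ldots,x_{2g}$ (from $\pi_1(\Sigma)$) and the additional relators $a_1,\ldots,a_g,b_1,\ldots,b_g$ coming from the compressing disks, and abelianizing kills the surface relator.

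Next I would observe that $A(f)$ is by construction the matrix whose $i$-th row is the coordinate vector of the $i$-th relator in the basis $\{[x_1],\ldots,[x_{2g}]\}$. Consequently the subgroup $\langle [a_1],\ldots,[b_g]\rangle \subset \mathbb{Z}^{2g}$ is precisely the image of the linear map $A(f)^T:\mathbb{Z}^{2g}\to\mathbb{Z}^{2g}$, so
\[
H_1(N_f;\mathbb{Z}) \;\cong\; \operatorname{coker}\bigl(A(f)^T\bigr).
\]

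Finally, I would invoke the Smith normal form: for any integer matrix $M\in M_{n}(\mathbb{Z})$, there exist $P,Q\in \mathrm{GL}_n(\mathbb{Z})$ with $PMQ = \mathrm{diag}(d_1,\ldots,d_n)$, whence
\[
\operatorname{coker}(M) \;\cong\; \mathbb{Z}/d_1\mathbb{Z}\oplus\cdots\oplus\mathbb{Z}/d_n\mathbb{Z},
\]
with the convention $\mathbb{Z}/0\mathbb{Z}=\mathbb{Z}$. This cokernel is finite exactly when every $d_i\neq 0$, i.e. when $\det M \neq 0$, in which case its order is $|d_1\cdots d_n| = |\det M|$. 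Applying this to $M = A(f)^T$ and using $\det A(f)^T = \det A(f)$ yields both statements of the lemma.

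I do not expect a serious obstacle; the only point requiring care is the bookkeeping between the row-convention in the definition of $A(f)$ and the column-convention of a linear map, which is why the transpose appears but does not affect the determinant.
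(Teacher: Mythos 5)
Your argument is correct. The paper does not prove this lemma itself---it cites it as Lemma~3.31 of the reference \cite{notes}---but it does set up precisely the Mayer--Vietoris framework and the matrix $A(f)$ that you use, so your route (identify $H_1(N_f;\Z)$ with the cokernel of the map presented by $A(f)^T$, then apply Smith normal form) is the standard proof that the citation points to, and it is sound: the key diagram-chase step, that $\operatorname{coker}\phi\cong H_1(\Sigma;\Z)/(L_A+L_B)$ where $L_A=\ker(H_1(\Sigma)\to H_1(H_A))=\langle[a_1],\ldots,[a_g]\rangle$ and similarly for $L_B$, is exactly right and relies on the surjectivity of $H_1(\Sigma)\to H_1(H_{A})$ and $H_1(\Sigma)\to H_1(H_{B})$, which holds because each handlebody retracts to a wedge of circles on its boundary.
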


\begin{thm}[Theorem \ref{thm:Betti} of the introduction]
Suppose $b_{1}(N_{f},\mathbb{Z})$ is the first Betti number of $N_{f}$,  and  $|H_{1}(N_{f};\Z)|$ is the number of elements in the group $H_{1}(N_{f};\Z)$.  In this case 
\begin{enumerate}
\item The set $\lbrace f \in \mcg \ | \ b_{1}(N_{f},\mathbb{Z})=0\rbrace$ is a thick set
\item The set $\lbrace f \in \mcg \ | \ |H_{1}(N_{f};\Z)|\geqslant k \rbrace$ for any integer $k\geqslant 0$ is a thick set
\item If $p$ is prime and $0\leqslant i \leqslant g$, the set $\lbrace f \in \mcg \ | \ b_{1}(N_{f},\mathbb{F}_p)=i\rbrace$ is dense but not thick.
\end{enumerate}
\end{thm}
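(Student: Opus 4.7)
My plan is to reduce each claim to a statement about the matrix $A(f)$ via Lemma \ref{lem:rank} and the fact that $\dim_{\mathbb{F}_p} H_1(N_f, \mathbb{F}_p) = 2g - \operatorname{rank}_{\mathbb{F}_p} A(f)$. The common input is the effect of Dehn twists on $A(f)$: because $(D_\gamma)_*^n = I + n\, T_\gamma$ on $H_1(\Sigma, \mathbb{Z})$, with $T_\gamma(v) = \langle v, [\gamma]\rangle\, [\gamma]$ of rank one for non-separating $\gamma$ and zero for separating $\gamma$ (higher powers truncate since $T_\gamma^2 = 0$), and because the bottom $g$ rows of $A(f)$ depend linearly on $f_*$ while the top $g$ are the fixed classes $[a_j]$, one obtains for every simple closed curve $\gamma$
$$A(f \circ D_\gamma^n) = A(f) + n\, E_\gamma(f), \qquad \operatorname{rank} E_\gamma(f) \leq 1,$$
and analogously for $D_\gamma^n \circ f$ (via Proposition \ref{prop:normal_gen_set}, since Dehn twists are closed under conjugation).

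For part (1), the matrix determinant lemma then says that $n \mapsto \det A(f \circ D_\gamma^n)$ is an affine polynomial in $n$. Picking any $f_0$ for which $N_{f_0}$ is a rational homology sphere (for example a standard genus-$g$ Heegaard splitting of $S^3$) gives $\det A(f_0) \neq 0$, so the polynomial is not identically zero and has at most one integer root; the same reasoning at any $f$ with $\det A(f) \neq 0$ shows that $\{f : \det A(f) \neq 0\}$ satisfies the cofiniteness condition defining $\mathcal{O}$, hence is open, and non-emptiness makes it thick. Part (2) is essentially identical: a non-constant affine polynomial has absolute value tending to infinity and a constant one retains its value, so $|\det A(f_0 \circ D_\gamma^n)| \geq k$ for all but finitely many $n$; non-emptiness for each $k$ is realized by Heegaard splittings of lens spaces.

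For part (3), let $V_i = \{f : b_1(N_f, \mathbb{F}_p) = i\}$. The bound $i \leq g$ comes from the Lagrangian basis $[a_1], \ldots, [a_g]$ being linearly independent modulo $p$, so $\operatorname{rank}_{\mathbb{F}_p} A(f) \geq g$. Non-emptiness of each $V_i$ is standard: $i = 0$ from rational homology spheres with $|H_1|$ coprime to $p$, $i = g$ from a gluing map producing $\#_g (S^2 \times S^1)$, and intermediate $i$ by composing with suitable twists. Density follows from Lemma \ref{lem:notthick}: for any $f \in V_i$ and any $\gamma$, $A(f \circ D_\gamma^n) \equiv A(f) \pmod p$ whenever $p \mid n$, giving infinitely many $n$ with $f \circ D_\gamma^n \in V_i$.

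The harder part is showing that $V_i$ is not thick, i.e.\ that every $f \in V_i$ admits some non-separating $\gamma$ with $\operatorname{rank}_{\mathbb{F}_p} A(f \circ D_\gamma) \neq 2g - i$; this would make $f \circ D_\gamma^n \notin V_i$ along the entire residue class $n \equiv 1 \pmod p$, so $V_i$ is not open at $f$. Writing $E_\gamma(f) = \begin{pmatrix} 0 \\ u\, w^T \end{pmatrix}$ with $u \in \mathbb{F}_p^g$ and $w \in \mathbb{F}_p^{2g}$ explicit linear functions of $[\gamma]$ (depending on $f_*$), the row span of $A(f) \bmod p$ has codimension $i \geq 1$ and the locus forcing $u \equiv 0 \pmod p$ is a proper $\mathbb{F}_p$-subspace of codimension $g$; a cardinality count in $\mathbb{F}_p^{2g}$ (valid for $g \geq 2$ and any prime $p$) produces a nonzero class $c \in \mathbb{F}_p^{2g}$ avoiding both bad loci, which lifts to a primitive integral class realized by a non-separating simple closed curve $\gamma$. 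For such a $\gamma$ the row span of $A(f) + E_\gamma(f)$ strictly contains that of $A(f) \bmod p$, so the rank strictly increases, as required. This final linear-algebra dimension count, together with the realization of the prescribed primitive homology class by a simple closed curve, is the main obstacle; the remaining parts follow essentially formally from the matrix determinant lemma and explicit base examples.
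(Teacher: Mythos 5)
Your treatment of parts (1), (2), and the density claim in part (3) is essentially the paper's argument in slightly different clothing: the paper also computes how $A(f\circ D_c^n)$ differs from $A(f)$ (row $i$ gets $n\,\hat{\imath}(a_i,c)[c]$ added for $1\leq i\leq g$, i.e.\ a rank-one perturbation), notes that the determinant is affine in $n$ and hence vanishes for at most one $n$, observes that $|\det A(f\circ D_c^n)|\geq |\det A(f)|$ for all large $|n|$, and for (3) uses that $A(f\circ D_c^{np})\equiv A(f)\pmod p$ together with Lemma~\ref{lem:notthick} to get density. So up to that point the two proofs agree.

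The gap is in your argument that $V_i=\{f : b_1(N_f,\mathbb{F}_p)=i\}$ is not thick. First, your dimension count assumes $i\geq 1$ (``the row span \ldots has codimension $i\geq 1$''), so the case $i=0$, where $A(f)$ already has full rank mod $p$ and the rank cannot be pushed up, is simply not addressed. Second, even for $i\geq 1$ the concluding step does not hold as stated: adding the rank-one matrix $E_\gamma$ to $A(f)$ does \emph{not} in general give a matrix whose row span contains that of $A(f)$. One only gets $\operatorname{rowspan}(A)+\langle w\rangle = \operatorname{rowspan}(A+E)+\langle w\rangle$, and $\operatorname{rank}(A+E)=\operatorname{rank}(A)+1$ holds iff $w\notin\operatorname{rowspan}(A+E)$, which is a genuinely different condition from $w\notin\operatorname{rowspan}(A)$; there is a third ``bad locus'' coming from linear relations among the rows whose coefficients pair nontrivially with $u$, which your cardinality count does not account for.

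There is a much shorter route that you already have all the pieces for, and which is presumably what the paper intends by ``the theorem follows from Lemma~\ref{lem:notthick}''. The sets $V_0,\dots,V_g$ partition $\mcg$, each nonempty $V_i$ is dense by exactly the $n\equiv 0\pmod p$ observation together with Lemma~\ref{lem:notthick}, and at least two of them are nonempty (e.g.\ $V_0$ from a $\mathbb{Z}/p$-homology sphere and $V_g$ from $\#_g(S^2\times S^1)$). If some $V_i$ had a nonempty open subset $U$, pick $j\neq i$ with $V_j\neq\emptyset$; density of $V_j$ gives $U\cap V_j\neq\emptyset$, contradicting $V_i\cap V_j=\emptyset$. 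This disposes of all $i$, including $i=0$, without any linear algebra.
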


\begin{proof}
By Lemma \ref{lem:rank}, proving the first part of the theorem is the same as showing that the set $\lbrace f \in \mcg \ | \ \det(A(f))\neq0\rbrace$ is thick. Since this set is clearly nonempty, it suffices to show that the set is open.
Consider the map from $K(f)$ to $K(fD_{c}^{n})$. This map fixes the last $g$ rows, and adds the vector $n\hat{i}(a_{i}, c)[c]$ to row $i$, for $1\leq i\leq g$. Here $\hat{i}$ denotes algebraic intersection number, and $[c]$ is assumed to be a vector representing the homology class with representative $c$ in the basis $\{[x_{1}],\ldots,[x_{2g}]\}$.\\

Since the determinant of $K(f)$ is nonzero, $[a_{1}], \ldots, [a_{g}]$ and $[b_{1}], \ldots, [b_{g}]$ can be chosen such that $[c]$ is in the span of $[a_{g}]$ and $[b_{1}]$.  It follows that the determinant of $K(fD_{c}^{n})$ can only be zero if  the projection of $n\hat{i}(a_{i}, c)[c]$ onto the unit vector parallel to $[a_{g}]$ is given by $-[a_{g}]$. This can happen for at most one value of $n$.\\

Similarly, consider the  map from $K(f)$ to $K(D_{c}^{n}f)$. This map fixes the first $g$ rows, and adds the vector $n\hat{i}(b_{i}, c)[c]$ to row $i+g$, for $1\leq i\leq g$. The same argument as before shows that the determinant of $K(D_{c}^{n}f)$ can only be zero for finitely many $n$. This proves the first part of the theorem.\\

From the expressions for $K(fD_{c}^{n})$ and $K(D_{c}^{n}f)$ just given, it follows that for $n^{*}$ sufficiently large, $n>n^{*}$ implies that $|\det(K(fD_{c}^{n}))|\geq |\det(K)|$ and $|\det(K(D_{c}^{n}f))|\geq |\det(K)|$. The second part of the theorem is then a consequence of Lemma \ref{lem:rank}.\\

To prove the last part of the theorem, note that $b_{1}(N_{f}, \mathbb{F}_p)$ is equal to the dimension of the kernel of $K(f) \mod p$. Similarly for $b_{1}(N_{fD_{c}^{n}}, \mathbb{F}_p)$ and  $b_{1}(N_{D_{c}^{n}f}, \mathbb{F}_p)$. Since $K(fD_{c}^{np})=K(D_{c}^{np}f)=K(f)\mod p$, the theorem follows from Lemma \ref{lem:notthick}.
\end{proof}

\section{Hempel distance  and thick sets.}
\label{sec:final}
This final section uses the notion of Hempel distance to prove further theorems about 3-manifolds obtained as Heegaard splittings with generic gluing maps. Subsection \ref{sec:Hempel} defines Hempel distance, and shows how it relates to the study of properties of 3-manifolds describable as Heegaard splittings with generic gluing maps. Subsection \ref{sec:handlebody} covers some background on handlebody sets. In Subsection \ref{sec:subsurface}, subsurface projections are introduced, and subsurfaces to which handlebody sets have unbounded diameters in subsurface projections are studied. The final subsection uses these results to partition the handlebody sets into 3 subsets. This partition is used to give a proof of Lemma \ref{maintheorem}, and hence of Theorems  \ref{thm:stabilisation}, \ref{thm:incompressible} and \ref{thm:Hempel}. 

\subsection{Incompressible surfaces and Hempel distance}
\label{sec:Hempel}
Hempel distance is defined in terms of Harvey's curve complex $\mathcal{C}(\Sigma)$. This is the simplicial complex with vertices corresponding to nontrivial homotopy classes of simple closed curves on $\Sigma$, and simplices corresponding to sets of curves with pairwise geometric intersection number zero. There is a combinatorial distance defined on $\mathcal{C}(\Sigma)$, where the distance between two vertices is equal to the smallest number of edges of any path connecting the vertices. A Heegaard splitting $N_{f}=H_{A}\cup H_{B}$ determines two sets in $\mathcal{C}(\Sigma)$. One of the sets consists of the set of simple curves on $\Sigma=\partial H_{A}=\partial H_{B}$ that bound disks in $H_{A}$, and the other set consists of the set of simple curves on $\Sigma$ that bound disks in $H_{B}$. The former set will be called $A$ and the latter $B$. The Hempel distance, \cite{Hempel}, is defined to be the distance $d(A,B)$ in $\mathcal{C}(\Sigma)$ between the two sets $A$ and $B$, where $d(A,B):=\inf_{a\in A, b\in B}d(a,b)$.\\

 It is argued that the Hempel distance is an interesting measure of the complexity of $N_{f}$, for example, because reducibility properties of $N_{f}$ can be elegantly formulated in terms of Hempel distance \cite{CassonGordon}. In the context of this paper, Hempel distance will be used to bound from below the genus of an incompressible surface embedded in $N_{f}$. \\

\begin{lem}[Theorem 1.2 of \cite{Har02}, or consequence of Theorem 3.1 \cite{BS}]
\label{lem:genusdistance}
The genus of a closed, incompressible surface $F$ embedded in $N_{f}$ is greater than or equal to half the Hempel distance.
\end{lem}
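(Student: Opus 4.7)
This is Hartshorn's theorem relating the Hempel distance of a Heegaard splitting to the genus of an embedded closed incompressible surface. The plan is to isotope $F$ into a controlled position with respect to $\Sigma$, extract a path in $\mathcal{C}(\Sigma)$ from $A$ to $B$ using the pieces of $F\setminus\Sigma$ together with meridian disks of $H_A$ and $H_B$, and bound the length of that path by $2g$ via Euler characteristic.

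First, I would put $F$ in general position and isotope it to minimise $|F\cap\Sigma|$. Because $F$ is incompressible in $N_f$ and the handlebodies $H_A,H_B$ are irreducible, standard innermost-disk and outermost-arc surgeries show that after minimisation every component of $F\cap\Sigma$ is essential on both $F$ and $\Sigma$, and every component of $F\cap H_A$ and $F\cap H_B$ is incompressible and (after possibly a further reduction) $\partial$-incompressible in the relevant handlebody.

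Next, I would invoke the handlebody fact that any essential (incompressible and $\partial$-incompressible) properly embedded surface $S$ in a handlebody $H$ with $\partial S\subset\partial H$ admits an essential meridian disk $D\subset H$ whose boundary is disjoint from $\partial S$ on $\partial H$. This is proved by cutting $H$ along a complete meridian disk system, intersecting with $S$, and clearing arcs by outermost-arc surgery; pieces of $F\cap H_A$ that are already meridian disks contribute their boundary directly. Applied to every component of $F\cap H_A$ (resp.\ $F\cap H_B$), this produces for each such component $P$ a curve $\alpha_P\in A$ (resp.\ $\beta_P\in B$) that is disjoint from $\partial P\subset F\cap\Sigma$. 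I would then form the bipartite dual graph $G$ whose vertices are the pieces of $F\cap H_A$ and $F\cap H_B$ and whose edges are the curves of $F\cap\Sigma$ (each joining the unique $H_A$-piece and $H_B$-piece it bounds). Connectedness of $F$ forces $G$ to be connected, so I can pick an edge path in $G$ from some $H_A$-piece to some $H_B$-piece; reading off the associated meridian-disk boundaries and intersection curves gives an alternating sequence $\alpha_0,\gamma_1,\beta_1,\gamma_2,\alpha_2,\dots$ in $\mathcal{C}(\Sigma)$ whose consecutive terms are disjoint on $\Sigma$ (the $\gamma_i$'s are disjoint from each other as components of $F\cap\Sigma$, and disjoint from the neighbouring meridian boundaries by the handlebody fact). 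This is a path in $\mathcal{C}(\Sigma)$ from $A$ to $B$.

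Finally, I would bound the length of this path by $2g$. Each piece of $F\setminus\Sigma$ is essential, so every non-annular piece has $\chi\leq -1$; $\partial$-parallel annular pieces can be eliminated by isotopy, and the remaining essential annuli can be absorbed into a single step of the path. Summing $\sum_i\chi(F_i)=\chi(F)=2-2g$ therefore bounds the number of non-annular pieces of $F\setminus\Sigma$ by $2g-2$, which in turn bounds the length of a shortest $A$-to-$B$ path in $G$, and hence the length of the induced path in $\mathcal{C}(\Sigma)$ by $2g$. This gives $d(A,B)\leq 2g$, i.e.\ $\mathrm{genus}(F)\geq\tfrac{1}{2}d(A,B)$. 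The main obstacle is precisely this last step: the Euler-characteristic bookkeeping must be done carefully to handle the annular pieces (which have $\chi=0$ and a priori could inflate the path length) and to ensure that disk components of $F\cap H_A$ or $F\cap H_B$, when they arise, are folded cleanly into the count. The rest of the argument — general position, innermost-disk minimisation, and the handlebody lemma — is standard.
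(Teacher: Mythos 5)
Your argument is correct, and you have correctly identified the statement as Hartshorn's theorem; what you give is essentially Hartshorn's original proof. The paper's own proof takes a somewhat different route to the same bound. Instead of cutting $F$ directly along the Heegaard surface, the paper first re-decomposes $N_f$ as two (slightly shrunk) handlebodies joined by a product region $\Sigma\times I$, isotopes $F$ to minimise its intersection with $\partial(\Sigma\times I)$, and observes that the curves of $F\cap(\Sigma\times\{0\})$ and $F\cap(\Sigma\times\{1\})$ are then meridians of $H_A$ and $H_B$ respectively. The path in $\mathcal{C}(\Sigma)$ is extracted from the family of level curves $F\cap(\Sigma\times\{t\})$ as $t$ sweeps across $I$, and the length bound comes from counting pairs of pants in a pants decomposition of $F\cap(\Sigma\times I)$. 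Your version, by contrast, builds the path from the bipartite dual graph on the components of $F\setminus\Sigma$, with the edges given by disjointness of essential-surface meridians obtained from a handlebody lemma; the length bound then comes directly from summing Euler characteristics of the pieces. Both routes hinge on the same two inputs (essential pieces in handlebodies give disjoint meridians, and $\chi(F)=2-2g$ controls the number of complexity-contributing pieces), but your dual-graph formulation makes the path construction and the annulus bookkeeping more explicit, whereas the paper's sweepout version leaves the step from the one-parameter family of level curves to a genuine $\mathcal{C}(\Sigma)$-path, and the case of many components on each side, at the level of a remark. Your concern at the end about folding in disk and annular pieces is well-placed and is precisely the point the paper flags with Figure~\ref{manypants}; in both treatments it is resolved by requiring $|F\cap\Sigma|$ (respectively $|F\cap\partial(\Sigma\times I)|$) to be minimal in the isotopy class, which forces inessential and boundary-parallel pieces to have been removed before the count is made.
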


An important special case of Lemma \ref{lem:genusdistance} in \cite{Hempel} is that if the Hempel distance of $N_{f}$ is greater than or equal to 3,  $N_{f}$ cannot contain an incompressible sphere or torus, from which it is then shown that $N_{f}$ is hyperbolic.\\

This final section will be primarily devoted to proving the following lemma:
\begin{lem}
\label{maintheorem}
Suppose that $d(A,B)\geq 3$, and let $c$ be a simple curve. It follows that for all but finitely many $n\in \mathbb{Z}$, 
\begin{equation*}
d(D_{c}^{n}(A), B)\geq d(A,B).
\end{equation*}
\end{lem}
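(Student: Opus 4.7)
The plan is to reduce the claim to showing that, for all but finitely many $n$, every geodesic in $\mathcal{C}(\Sigma)$ realizing $d(A, D_c^n(B))$ must pass through a vertex disjoint from $c$. First I would dispose of the trivial case where $c \in A$: then $D_c$ extends to a self-homeomorphism of the handlebody $H_A$ (the twist along the disk that $c$ bounds), hence preserves $A$. Since $D_c$ acts on $\mathcal{C}(\Sigma)$ as a simplicial isometry, this gives $d(A, D_c^n(B)) = d(D_c^{-n}(A), B) = d(A, B)$ for every $n$. The case $c \in B$ is symmetric, so we may assume $c \notin A \cup B$.

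The key structural observation is the following. Suppose $\gamma = (\gamma_0, \gamma_1, \ldots, \gamma_k)$ is a path in $\mathcal{C}(\Sigma)$ from $a \in A$ to $D_c^n(b) \in D_c^n(B)$ containing some vertex $\gamma_j$ with $i(\gamma_j, c) = 0$. Then $D_c^n$ fixes $\gamma_j$, so the concatenation $\gamma_0, \ldots, \gamma_j, D_c^{-n}(\gamma_{j+1}), \ldots, D_c^{-n}(\gamma_k)$ is again a path in $\mathcal{C}(\Sigma)$ (adjacency being preserved by the homeomorphism $D_c^{-n}$), it has the same length $k$, and it runs from $a$ to $b$. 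Hence $d(A, B) \le k$; applied to a geodesic of length $k = d(A, D_c^n(B))$, this yields the bound $d(A, D_c^n(B)) \ge d(A, B)$. So it suffices to prove that for all but finitely many $n$, no geodesic between $A$ and $D_c^n(B)$ is made entirely of curves essentially intersecting $c$.

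To rule such geodesics out I would apply the Bounded Geodesic Image theorem of Masur and Minsky: if every vertex of $\gamma$ meets $c$ essentially, then the annular projection $\pi_c(\gamma)$ has diameter at most a universal constant $K$ in the annular curve complex of $c$. Because $D_c$ acts on that complex (quasi-isometric to $\mathbb{Z}$) by translation, $\pi_c(D_c^n(b)) = \pi_c(b) + n$, and the BGI bound rewrites as $|\pi_c(a) - \pi_c(b) - n| \le K$. Consequently the bad $n$ lie in the union, over pairs $(a, b) \in A \times B$ with both curves meeting $c$, of the length-$2K$ windows $[\pi_c(a) - \pi_c(b) - K, \pi_c(a) - \pi_c(b) + K]$. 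When both $\pi_c(A)$ and $\pi_c(B)$ have bounded diameter, this union is bounded and the lemma follows at once; by the Masur--Schleimer theory of handlebody-set projections, bounded diameter is guaranteed whenever the annulus about $c$ is not a hole for the corresponding disk set, i.e.\ whenever $c$ is disjoint from some curve in $A$ (resp.\ $B$).

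The main obstacle, and what I expect to be the source of the extended technical argument announced for the later subsections, is the case where the annulus about $c$ is a hole for one of the handlebody sets. The triangle inequality combined with $d(A, B) \ge 3$ rules out its being a non-hole for both simultaneously --- were $c$ disjoint from curves in both $A$ and $B$, we would have $d(A, B) \le 2$ --- but it is perfectly possible to have $c$ disjoint from some $a_0 \in A$ while meeting every curve in $B$, in which case $\pi_c(B)$ can be unbounded and the naive union of windows ceases to be bounded. In this regime the annular projection alone is not enough; the plan, in line with the paper's outline, is to combine the annular BGI bound with Masur--Schleimer bounds on projections of $A$ and $B$ to carefully chosen non-annular subsurfaces, and to exploit the full inequality $d(a, D_c^n(b)) < d(A, B)$ --- not merely the BGI projection bound --- to constrain, for each hypothetical bad $n$, the pair $(a, b)$ that could be responsible. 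The argument should conclude by showing that only finitely many $n$ can support such a pair.
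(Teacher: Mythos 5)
Your reduction is correct and matches the paper's opening moves: the trivial cases $c\in A$ or $c\in B$ are handled by noting $D_c$ preserves the handlebody set; the unwinding trick through a vertex disjoint from $c$ gives $d(A,B)\le d(A,D_c^n(B))$; and when both $\pi_c(A)$ and $\pi_c(B)$ are bounded, BGI plus the translation action of $D_c$ on the annular complex leaves only finitely many bad $n$. You also correctly identify that $d(A,B)\ge 3$ forbids both disk sets from having $c$ as a hole simultaneously, and that the genuine difficulty is the case $d(c,A)=1$, $d(c,B)\ge 2$, where $\pi_c(A)$ is unbounded. The paper agrees on all of this, and in fact your ``easy'' case is precisely what Proposition \ref{twocurves} plus Theorem \ref{masurschleimer} deliver.

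However, for the hard case you only announce a plan (``combine the annular BGI bound with Masur--Schleimer bounds on projections to carefully chosen non-annular subsurfaces\dots The argument should conclude by showing\dots''), and this plan, stated at that level of generality, does not constitute a proof and omits the ideas that actually make it work. The paper's argument requires a stratification $A = s_1 \cup s_2 \cup s_3$ built from the mechanism of $c$-twisting band sums: $s_1$ (curves disjoint from $c$ and their band sums) is $D_c$-invariant so trivial; $s_2$ (curves within a radius $R$ of a fixed pants system in $\pi_c$) has bounded annular projection so falls into your easy case; and the real content concerns $s_3$. For $s_3$ the paper proves Lemma \ref{lem:largeY}: if you move far from $s_2$ in the $\mathcal{A}_c$-projection while staying in $A$, you are \emph{forced} to develop a large projection to a second subsurface $Y$ disjoint from $c$, and moreover $Y$ contains either a curve $t_i$ cobounding an annulus in $H_A$ with $c$, or an element of $A$. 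That specific structure of $Y$ is indispensable, because the final step is not a window-counting argument at all: given a hypothetical short geodesic $\gamma$ from $D_c^{-n}(s_3)$ to $B$, the paper uses the fact that $\gamma$ must pass through a vertex disjoint from $t_i$ (or from an element of $A$) to explicitly reroute $\gamma$ into a path $\gamma'$ from a genuine vertex of $A$ to $B$ of no greater length, via $c$-twisting band sums around $t_i$, which contradicts $d(A,B)\ge 3$. Your sketch does not identify the curves $t_i$, the train-track/intersection-growth argument (Lemmas \ref{intersectionnumber} and \ref{lem:largeY}) that pins down $Y$, or the rerouting construction; without these, ``constrain the pair $(a,b)$'' is a wish rather than a proof. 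So the gap is real: the easy two-thirds of the lemma is done correctly, but the case that the paper spends three subsections on is left unproved.
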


This lemma implies Theorem \ref{thm:Hempel}. Together with Lemma \ref{lem:genusdistance}, this proves Theorem \ref{thm:incompressible}.\\

It will now be shown that Theorem \ref{thm:stabilisation} also follows from Lemma \ref{maintheorem}.

\begin{thm}
Assuming Lemma \ref{maintheorem}, the  sets $\lbrace f \in \mcg \ | \ g(N_f)=g(\Sigma) \rbrace$ and $\lbrace f \in \mcg \ | \ g(N_{f})\geq n\rbrace$ for $ n\geq 2$ are thick.
\end{thm}

\begin{proof}
When $g(N_{f})<g(\Sigma)$ the Heegaard splitting is said to be stabilised. By Lemma 5.5 of \cite{notes}, this happens iff there is an embedded disk $d_{a}$ in $H_{A}$ and an embedded disk $d_{b}$ in $H_{B}$ such that the boundaries of the disks are curves on $\Sigma$ in general position that intersect once only.\\

A necessary condition for the Heegaard splitting to be stabilised is therefore that $d(A,B)\leq 2$. By Lemma \ref{maintheorem}, the set $$\lbrace f \in \mcg \ | \text{ the Heegaard splitting induced by } f \text{ has Hempel distance}\geq 3 \rbrace$$ is thick. The intersection of the set $$\lbrace f \in \mcg \ | \ g(N_f)=g(\Sigma) \rbrace$$ with the thick set $$\lbrace f \in \mcg \ | \text{ the Heegaard splitting induced by } f \text{ has Hempel distance}\geq 3 \rbrace$$ is the entire set $$\lbrace f \in \mcg \ | \text{ the Heegaard splitting induced by } f \text{ has Hempel distance}\geq 3 \rbrace$$ and hence thick. That $\lbrace f \in \mcg \ | \ g(N_f)=g(\Sigma) \rbrace$ is thick then follows from Proposition \ref{prop:topology}.\\

For compact, orientable 3-manifolds with Heegaard splitting surfaces $P$ and $Q$, in \cite{ST06} it was shown that either $Q$ is isotopic to a possibly stabilised copy of $P$, or the Hempel distance of the splitting with splitting surface $P$ is bounded from above by twice the genus of $Q$. It follows from the argument just given, Proposition \ref{prop:topology}, Lemma \ref{maintheorem} and the first statement of this theorem that the set  $\lbrace f \in \mcg \ | \ g(N_{f})\geq n, \text{ for } n\geq 2\rbrace$ is thick.
\end{proof}

\subsection{Handlebody sets}
\label{sec:handlebody}

Recall that  $H_{A}$ and $H_{B}$ are handlebodies, with $\partial H_{A}$ and $\partial H_{B}$ both given by the surface $\Sigma$. The set of curves on $\partial H_{A}$ that bound embedded disks in $H_{A}$ will be denoted by $A$, and the set of curves on $\partial H_{B}$ that bound embedded disks in $H_{B}$ by $B$. Sets of vertices in $\mathcal{C}(\Sigma)$ that arise in this way are called handlebody sets. This subsection gives some background on handlebody sets that will be needed in the proof of Lemma \ref{maintheorem}.\\
 
\textbf{Basic assumptions and notations.} A curve is a nontrivial isotopy class of unpointed embeddings of $S^1$ into $\Sigma$.  Arcs in a subsurface of $\Sigma$ are isotopy classes, where isotopies are required to keep the endpoints on the boundary of the subsurface. Where convenient, a curve or arc will sometimes be confused with the image of a particular representative of the isotopy class, or the corresponding vertex of $\mathcal{C}(\Sigma)$. For example, when cutting a (sub)surface along a curve or arc, it will always be assumed that ``curve'' or ``arc'' refers to the image in the surface of a particular representative of the isotopy class, such as a geodesic.  A curve or arc will be said to be contained in a subsurface if there is a representative of the homotopy class with image contained in the subsurface.\\

The geometric intersection number of two curves, $c_{1}$ and $c_{2}$, on $\Sigma$ will be denoted by $i(c_{1}, c_{2})$. The geometric intersection number of two arcs $c_{1}$ and $c_{2}$ will also be denoted by $i(c_{1}, c_{2})$. Recall that $\hat{i}(c_{1},c_{2})$ denotes algebraic intersection number.\\

The most difficult case in Lemma \ref{maintheorem} occurs when the curve $c$ is distance 1 from $A$ or $B$. The reason for this will become apparent later. From now on, unless explicitly stated otherwise, it will be assumed that the curve $c$ in the statement of Lemma \ref{maintheorem} is distance 1 from $A$. The general case, in which $c$ could be any distance from $A$, will be dealt with in Lemma \ref{maintheorem}.\\
 
The set $A$ contains an unoriented set of $3g-3$ curves $\{a_{i}\}$ that determine a pants decomposition of $\Sigma$. It will be assumed that the curves $\{a_{1}, \ldots, a_{m-1}\}$ are disjoint from $c$, and the curves $\{a_{m}, \ldots, a_{n}\}$ are not. Since $c$ is distance 1 from $A$, $m-1\neq n$ and the curves can be chosen such that $m>1$. \\

The embedded disk bounded by a curve $a\in A$ will be denoted by $\mathcal{D}_{a}$.\\

Curves in a handlebody set can be conveniently described in terms of a certain decomposion, which will now be defined.\\

\textbf{Band sums.} Suppose $d$ and $e$ are two simple, disjoint curves on $\Sigma$, and $h$ is an embedded arc in $\Sigma \setminus (d\cup e)$ with one endpoint on $d$ and the other on $e$. The band sum of $d$ and $e$ is the connected component of the boundary of a regular neighbourhood of $d\cup h \cup e$ that is not isotopic to either $d$ or $e$. This is illustrated in Figure \ref{bandsum}. \\

\begin{figure}[!thpb]
\centering
\includegraphics[width=0.6\textwidth]{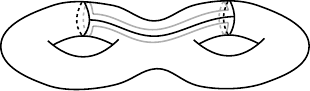}
\caption{A band sum of two curves on $\Sigma$, shown in grey.}
\label{bandsum}
\end{figure}

The band sum of two curves in $A$ is also in $A$. The band sum of $d$ and $e$ bounds a disk obtained by taking the (necessary disjoint) disks $\mathcal{D}_{d}$ and $\mathcal{D}_{e}$ and attaching a long, narrow rectangle $r$, with one ``short side'' on $\partial \mathcal{D}_{d}$ and the other ``short side'' on $\mathcal{D}_{e}$. The rectangles are attached in such a way that the orientations match up to give an oriented disk in the closure of the handlebody. Push the interior of the oriented disk into the interior of $H_{A}$, to obtain a disk with boundary the band sum of $d$ and $e$.\\

\begin{figure}[!thpb]
\centering
\includegraphics[width=0.2\textwidth]{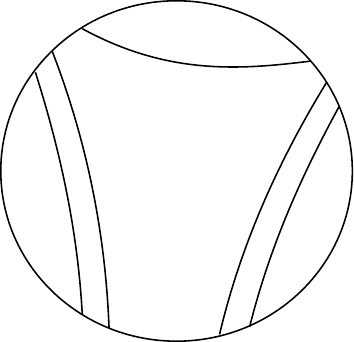}
\caption{The intersections of the disk $\mathcal{D}_{d_{i}}$ with $\mathcal{D}_{d_{j}}$.}
\label{diskintersections}
\end{figure}

\textbf{Bigon surgery.}  A pair of disks $\mathcal{D}_{d_{1}}$ and $\mathcal{D}_{d_{2}}$ embedded in $H_{A}$ with boundary curves $d_{1}$ and $d_{2}$ on $\partial H_{A}$ can be put in general and minimal position. The intersections consist of a finite number of embedded arcs with endpoints on the crossings of $d_{1}$ and $d_{2}$. These arcs cut each of the disks into pieces, as shown in Figure \ref{diskintersections}. Due to the fact that the sets of arcs are embedded in each of the disks, cutting each disk along the intersections must give at least two bigons for each disk. \\

Let $x$ be a connected component of the intersection of $\mathcal{D}_{d_{1}}$ with $\mathcal{D}_{d_{2}}$, with the property that $x$ makes up one side of a bigon in $\mathcal{D}_{d_{2}}$. Cut $\mathcal{D}_{d_{1}}$ along $x$, and glue in two copies of the bigon from $\mathcal{D}_{d_{2}}$; each with a different orientation. This gives two new embedded disks in $H_{A}$ with boundaries in $\partial H_{A}$. These disks are each disjoint from $\mathcal{D}_{d_{1}}$ and together have fewer crossings with $\mathcal{D}_{d_{2}}$ than $\mathcal{D}_{d_{1}}$. These two disks are obtained from the disk $\mathcal{D}_{d_{1}}$ by a bigon surgery.  

\begin{rem}
\label{algebraicint}
Any pair of curves $d$ and $e$ in $A$ have algebraic intersection number zero. This is because they bound disks that intersect along arcs in the interior of the handlebody. Each of these arcs has a pair of endpoints that are points of intersection of $d$ and $e$ with opposite handedness.
\end{rem}

For sufficiently small $\epsilon$, each boundary component of the $\epsilon$-neighbourhood of the union of geodesics $d_{1}\cup d_{2}$ is either homotopically trivial or a curve in $A$. Assuming that the disks $\mathcal{D}_{d_{1}}$ and $\mathcal{D}_{d_{2}}$ are in minimal position, the disk bounded by a boundary component is a boundary component of the $\epsilon$-neighbourhood of $\mathcal{D}_{d_{1}}\cup \mathcal{D}_{d_{2}}$ in $H_{A}$.

\subsection{Subsurface projections of handlebody sets}
\label{sec:subsurface}

This subsection studies subsurfaces to which projections of handlebody sets have unbounded diameter.\\

An embedded subsurface of $\Sigma$ will be called essential if all its boundary curves are homotopically nontrivial in $\Sigma$. Subsurfaces will be assumed to be compact, connected, essential, embedded subsurfaces of $\Sigma$. \\

In order to facilitate inductive arguments about distances and coarse geometry of $\mathcal{C}(\Sigma)$, Masur and Minsky \cite{MasurMinsky} defined the distance between two curves in a subsurface projection. Distances in subsurface projections to annuli are a special case of this definition. Distances between two curves $c_{1}$ and $c_{2}$ in the subsurface projection to a subsurface $Y$ will be denoted by $d_{Y}(c_{1}, c_{2})$. When discussing subsurface projections, in addition to the assumptions listed above, it will also be assumed that the subsurface $Y$ to which the projection is made is not a 3-holed sphere. This is because it was shown that large subsurface projections to 3-holed spheres never arise; their arc and curve complexes have bounded diameter. This paper will not give a detailed exposition of subsurface projections, as the topological nature of the arguments only require qualitative statements, rather than the strong numerical and/or uniform bounds for which the full force of \cite{MasurMinsky} is required. Interested readers not already familiar with the concept of subsurface projections are referred to \cite{MasurMinsky}.\\

Let $\mathcal{A}_c$ be an annulus with core curve $c$ embedded in $\Sigma$. Distances in the subsurface projection to $\mathcal{A}_{c}$ are a measure of the number of times one curve has been twisted around $c$ relative to the other. For example, as stated in Equation 2.6 of \cite{MasurMinsky}
\begin{equation*}
d_{\mathcal{A}_c}(a, D_{c}^{n}(a))=
\begin{cases}
2+|n|, & \text{ if } d(a,c)>1 \text{ and } n\neq 0 \\
0, & \text{ otherwise}
\end{cases}
\end{equation*}

%Instead of performing Dehn twists on curves, it will sometimes be necessary to modify only some subarcs of a curve. To twist a curve around $c$ means to choose a single arc of the curve passing through the annulus with core curve $c$, and to Dehn twist that arc around $c$. Figure \ref{increasetwisting} shows a curve that has been twisted around a curve $c$ and around a curve $t_{1}$. A twist around a curve $c$ will be denoted $T_{c}$. When there are choices involved about which arc to twist around, these choices will either be unimportant or will be clear from the context.\\

%\textbf{Twists and self-intersections.} When a curve has intersection number with $c$ greater than 1, applying $T_{c}$ will give rise to self crossings. In this paper, twists will only be performed in pairs that result in simple closed curves.\\

Distances in subsurface projections are used to obtain information about geodesic paths in the curve complex. The simplest way of doing this is based on the idea that if $d_{Y}(a,b)$ is sufficiently large relative to $d(a,b)$, any geodesic in $\mathcal{C}(\Sigma)$ from $a$ to $b$ must pass through a vertex corresponding to a curve disjoint from $Y$. An even stronger form of this statement is given in the following theorem.

\begin{thm}[Theorem 3.1 of \cite{MasurMinsky}]
\label{MM}
Suppose $Y$ is a subsurface of $\Sigma$. Let $\gamma$ be a geodesic segment, ray or infinite line in $\mathcal{C}(\Sigma)$, such that no vertex of $\gamma$ is disjoint from $Y$. There is a constant $\beta(\Sigma)$ depending only on $\Sigma$ such that any two vertices $v_{1}$ and $v_{2}$ of $\gamma$ satisfy $d_{Y}(v_{1}, v_{2})\leq \beta(\Sigma)$.
\end{thm}

The next proposition is a consequence of the previous theorem, and is included here to illustrate a ``hinge-like'' construction that will be used repeatedly in the proof of Lemma  \ref{maintheorem}. The proposition is also a consequence of Theorem 1.4 of \cite{Yoshizawa}.

\begin{prop}
\label{twocurves}
Let $a$, $b$, and $c$ be curves in $\Sigma$. It follows that for all but finitely many $n\in \mathbb{Z}$, 
\begin{equation*}
d(D_{c}^{n}(a), b)\geq d(a,b)
\end{equation*}
\end{prop}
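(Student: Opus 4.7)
The plan is to apply the principle stated in the paragraph preceding the proposition, taking $Y$ to be an annulus with core curve $c$: if $d_{c}(\alpha,\beta)>d(\alpha,\beta)$ then any $\mathcal{C}(\Sigma)$-geodesic from $\alpha$ to $\beta$ must pass through a vertex disjoint from $c$, and any such vertex is fixed by every power of $D_{c}$.

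First I dispose of the degenerate cases. If $a$ is disjoint from $c$ (including $a=c$) then $D_{c}^n$ fixes $a$ and, using that $D_{c}^n$ acts as an isometry of $\mathcal{C}(\Sigma)$,
\[
d(a,D_{c}^n(b))=d(D_{c}^{-n}(a),b)=d(a,b);
\]
similarly if $b$ is disjoint from $c$. So assume $d(a,c)\geq 2$ and $d(b,c)\geq 2$, so that the annular projections to $c$ of $a$, $b$, and $D_{c}^n(b)$ are all defined.

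Next I show that $d_{c}(a,D_{c}^n(b))\to\infty$ as $|n|\to\infty$. By the formula for $d_{c}(b,D_{c}^n(b))$ recalled at the start of this subsection, $d_{c}(b,D_{c}^n(b))\geq |n|$, and the triangle inequality in the annular complex gives
\[
d_{c}(a,D_{c}^n(b))\;\geq\; d_{c}(b,D_{c}^n(b))-d_{c}(a,b)\;\geq\; |n|-d_{c}(a,b).
\]
Set $D=d(a,b)$ and choose $N$ so that $|n|\geq N$ forces $d_{c}(a,D_{c}^n(b))>D$; all but finitely many $n$ satisfy this.

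Fix such an $n$ and suppose for contradiction that $d(a,D_{c}^n(b))<D$. Then $d_{c}(a,D_{c}^n(b))>D>d(a,D_{c}^n(b))$, so by the principle above any geodesic $a=v_{0},v_{1},\ldots,v_{k}=D_{c}^n(b)$ in $\mathcal{C}(\Sigma)$ must contain some $v_{i}$ disjoint from $c$. Since $v_{i}$ is then fixed by $D_{c}^n$,
\[
d(a,b)\leq d(a,v_{i})+d(v_{i},b)=d(a,v_{i})+d(v_{i},D_{c}^n(b))\leq k=d(a,D_{c}^n(b))<D,
\]
contradicting $D=d(a,b)$. Hence $d(a,D_{c}^n(b))\geq d(a,b)$ for all but finitely many $n$. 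The argument is short because the preceding paragraph already packages the one nontrivial ingredient (a coarse Lipschitz control of annular projections along curve-complex geodesics); there is no essential obstacle here, and the real difficulty of the section lies in promoting this two-curve statement to the handlebody-set statement of Lemma \ref{maintheorem}.
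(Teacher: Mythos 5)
Your proof is correct and uses the same core idea as the paper: for large $|n|$ the annular projection $d_c(a,D_c^n(b))$ grows without bound, so any geodesic must pass through a vertex disjoint from $c$, and since that vertex is fixed by $D_c^n$ (which acts by isometry) one can transport the second half of the geodesic back to end at $b$. Your contradiction-based formulation is slightly more streamlined than the paper's, which instead explicitly constructs a comparison path $\alpha_n$ through a vertex $c'$ near $c$ and then argues that $\alpha_n$ can be taken to be a geodesic, but the substance is identical.
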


\begin{proof}
Two basic but crucial observations are the following: The mapping class group (in this case, specifically powers of Dehn twists around $c$) acts by isometry on $\mathcal{C}(\Sigma)$. Secondly, $D_{c}$ fixes a vertex iff the vertex is distance at most one from $c$.\\

%Another simple but important observation is that if two simple curves, $c_{1}$ and $c_{2}$, both have essential intersections with $c$, a necessary condition for $c_{1}$ and $c_{2}$ to be disjoint is that $d_{\mathcal{A}_c}(c_{1}, c_{2})\leq 1$.\\

If $d(a,c)\leq 1$, then $D_{c}^{n}(a)=a$, so $d(D_{c}^{n}(a),b)=d(a,b)$. If $d(b,c)\leq 1$, then $D_{c}(b)=b$, so $d(a,b)=d(D^{n}_{c}(a),D^{n}_{c}(b))=d(D^{n}_{c}(a), b)$ and the lemma is again trivially true. It is therefore only necessary to deal with the case in which $c$ has essential intersections with both $a$ and $b$.\\

The construction used in the rest of the proof is illustrated in Figure \ref{hinge}. By Theorem \ref{MM}, there is an $n^{*}\in \mathbb{N}$ such that for $|n|\geq n^{*}$, every geodesic path in $\mathcal{C}(\Sigma)$ from $D_{c}^{n}(a)$ to $b$ passes through a vertex (this vertex depends on the path) corresponding to a curve disjoint from $c$. When $n=n^{*}$, denote this vertex by $c'$. In Figure \ref{hinge}, $\gamma_{n^{*}}$ is a geodesic from $D_{c}^{n^{*}}(a)$ to $b$. As shown in Figure \ref{hinge}, $\gamma_{n^{*},1}$ is used to denote the restriction of $\gamma_{n^{*}}$ from $D_{c}^{n^{*}}(a)$ to $c'$, and $\gamma_{n^{*},2}$ the restriction of $\gamma_{n^{*}}$ from $c'$ to $b$. Since $D_{c}^{-n^{*}}$ acts on $\mathcal{C}(\Sigma)$ by isometry, fixing $c'$, a path from $a$ to $b$ with the same length as $\gamma_{n^{*}}$ is obtained by concatenating $D_{c}^{-n^{*}}(\gamma_{n^{*},1})$ with $\gamma_{n^{*},2}$. This path cannot be shorter than a geodesic from $a$ to $b$, showing that $d(D_{c}^{n^{*}}(a), b)\geq d(a,b)$.\\

\begin{figure}[!thpb]
\centering
\includegraphics[width=0.45\textwidth]{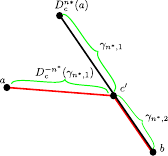}
\caption{Path construction in the proof of Proposition \ref{twocurves}. The path $\gamma_{n^{*}}$ is a geodesic from $D_{c}^{n^{*}}(a)$ to $b$, and $c'$ is a curve disjoint from $c$. The geodesic $\gamma_{n^{*}}$ is used to construct a path from $a$ to  $b$ as shown in red.}
\label{hinge}
\end{figure}

An analogous construction works for any $n\in \mathbb{Z}$ with $|n|>n^{*}$, proving the proposition.
\end{proof}

\begin{rem}
The construction in the proof of Proposition \ref{twocurves} implies that for all sufficiently large $|n|$, there exists a geodesic from $D_{c}^{n}(a)$ to $b$ passing through $c'$. This is proven simply by contradiction, as follows: Choose $n$ such that $|n|$ is large enough to ensure that every geodesic from $D_{c}^{n}(a)$ to $b$ passes through a vertex representing a curve disjoint from $c$, and suppose that no geodesic from $D_{c}^{n}(a)$ to $b$ passes through the vertex $c'$. This means that the path from $D_{c}^{n}(a)$ to $b$ obtained by concatenating $D_{c}^{(n-n^{*})}(\gamma_{n^{*},1})$ with $\gamma_{n^{*},2}$ is longer than a geodesic. However, as this path has the same length as $\gamma_{n^{*}}$, a geodesic $\gamma_{n}$ from $D_{c}^{n}(a)$ to $b$ can then be used to construct a path from $D_{c}^{n^{*}}(a)$ to $b$ shorter than the geodesic $\gamma_{n^{*}}$, which is the required contradiction.
\end{rem}

%Let $c'$ be a vertex satisfying $d(c,c')\leq 1$. For any $n$, a path $\gamma_{n}$ from $D_{c}^{n}(a)$ to $b$ of length at most $d(a,c)+d(c,b)$ can always be constructed, as follows: Start with a path $\gamma_{0}$ from $a$ to $b$ obtained by joining two paths; one from $a$ to $c'$, and the other from $c'$ to $b$. These paths can be chosen so that $\gamma_{0}$ has length less than or equal to $d(a,c)+d(c,b)$. Since $D_{c}^{n}$ fixes $c'$ and acts by isometry, a path $\gamma_{n}$ of the same length from $D_{c}^{n}(a)$ to $b$ is obtained by replacing the subpath from $a$ to $c'$ by its image under $D_{c}^{n}$.\\

%Now let $\gamma_{n^{*}}$ be any geodesic path in $\mathcal{C}(\Sigma)$ from $D_{c}^{n^{*}}(a)$ to $b$. When $c$ intersects each curve representing a vertex of $\gamma_{n^{*}}$, each edge of the path $\gamma_{n^{*}}$ can reduce the number of twists around $c$ by at most one. It follows that when $n^{*}$ is larger than $d(a,c)+d(c,b)$, there must be a vertex $c'$ on $\gamma_{n^{*}}$ within distance at most 1 from $c$. Moreover, since the image under $D^{n^{*}}_c$ of the curves representing the subpath of $\gamma_{n^{*}}$ connecting $D_{c}^{n*}(a)$ to $c'$ coincide outside of $\mathcal{A}_{c}$, for any $i$ such that $d_{\mathcal{A}_{c}}(D_{c}^{i}(a), b)\geq d(a,b)$ it is possible to choose $c'$ and the set of geodesic paths $\{\gamma_{i}\}$ such that every $\gamma_{i}$ passes through the same vertex $c'$.\\

%Since the distance $d(a,b)$ is no larger than the length of the path $\gamma_{0}$, this concludes the proof of the lemma.
%\end{proof}

\textbf{Large subsurface projections in handlebody sets.} Subsurfaces in which handlebody sets have unbounded diameter in subsurface projections will now be characterised. A tool that will be used to do this is an $I$-bundle. An $I$-bundle is a fiber bundle with fiber given by an interval $I$. Suppose the $I$-bundle is contained in $H_{A}$ and the base space $Y$ is an orientable surface with boundary. The boundary of the $I$-bundle consists of a horizontal boundary and a vertical boundary. The horizontal boundary is the union of the boundaries of the fibers. It consists of two disjoint, embedded subsurfaces $Y_{1}$ and $Y_{2}$ of $\Sigma$ both of which are homeomorphic to $Y$. The vertical boundary consists of boundary points in the interiors of fibers, and is a union of open annuli; one for each closed curve on $\partial Y$. The $I$-bundle will be called essential if $Y_{1}$ and $Y_{2}$ are essential subsurfaces of $\Sigma$ and each connected component of the vertical boundary is either contained in $\Sigma$, or the two curves on its boundary are not homotopic in $\Sigma$.

\begin{thm}[Corollary of Theorem 1.1 of \cite{MasurSchleimer}, as stated in \cite{Minsky}]
\label{masurschleimer}
The diameter of the subsurface projection of $A$ to a subsurface $Y$ of $\Sigma$ is bounded from above by a number $\delta(g)$ depending on the genus $g$ of $\Sigma$, unless 
\begin{enumerate}
\item{there is an element of $A$ in the complement of $Y$, }
\item{there is an element of $A$ in $Y$,} 
\item{there is an essential $I$-bundle $I$ in $H_{A}$ with $Y$ a component of its horizontal boundary, and at least one vertical annulus of $I$ lying in $\Sigma$.} 
\end{enumerate}
\end{thm}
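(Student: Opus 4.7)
The plan is to argue the contrapositive: assume that none of the three exceptional situations (1), (2), (3) holds for the essential subsurface $W$, and derive a uniform bound $\delta(g)$ on the diameter of $\pi_W(A)$ in $\mathcal{C}(W)$. The negation of (1) means every element of $A$ meets $\partial W$ essentially, so $\pi_W$ is defined on all of $A$; the negation of (2) means no disk boundary is already carried by $W$; the negation of (3) means $W$ is not the horizontal boundary of any essential $I$-bundle embedded in $H_A$ with a vertical annulus in $\Sigma$. Suppose for contradiction there exist sequences $\alpha_n,\beta_n \in A$, bounding embedded compressing disks $D_{\alpha_n}, D_{\beta_n} \subset H_A$, with $d_W(\alpha_n,\beta_n) \to \infty$.

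First I would put each disk into efficient position. Using the bigon surgery described in Subsection \ref{sec:handlebody}, replace $D_{\alpha_n}$ and $D_{\beta_n}$ by essential disks whose intersections with a fixed meridian system for $\partial W$ are minimal, so that the arcs $\partial D_{\alpha_n} \cap W$ and $\partial D_{\beta_n}\cap W$ are essential arcs in $W$, and symmetrically for the complement of $W$. Since $W$ supports only finitely many isotopy classes of essential arc systems with a fixed number of boundary points on $\partial W$, modulo Dehn twists along $\partial W$, a pigeonhole argument along the sequence produces two disks $D,D'$ with the \emph{same} combinatorial intersection pattern with $\partial W$ but with $d_W(\partial D,\partial D')$ as large as we like.

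The core step, and the one I expect to be the main obstacle, is to extract geometric structure in $H_A$ from the existence of $D,D'$. Glue $D$ to $D'$ along the matching arcs on $\partial W$, and examine the resulting properly immersed (in fact, after a further bigon-surgery cleanup, embedded) surface $F \subset H_A$ whose boundary lies on $\Sigma$. The dichotomy I aim for is: either a surgery of $F$ along a bigon or an innermost circle produces a new essential disk disjoint from $W$ (contradicting the negation of (1)) or wholly contained in $W$ (contradicting the negation of (2)); or else $F$ bounds a product region in $H_A$ homeomorphic to $W \times I$, whose vertical annuli are carried by the arc pattern $\partial D \cap \partial W \subset \Sigma$, yielding the $I$-bundle of case (3). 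Ruling out all possibilities simultaneously forces at least one of (1)--(3) to hold, which is the desired contradiction.

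The hard part is precisely the last dichotomy: showing that two disks with identical boundary combinatorics relative to $\partial W$, but highly displaced $\pi_W$-projections, must either admit a reducing surgery or cobound a product region. This is the heart of the Masur--Schleimer classification of holes for the disk complex, and a self-contained proof requires their full theory of normal-form disks and their ``double curves'' arguments. For this reason the natural writeup in this paper is to deduce the statement directly from Theorem 1.1 of \cite{MasurSchleimer}, rather than to reconstruct its proof.
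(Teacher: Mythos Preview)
The paper does not prove this theorem at all: it is stated as a black-box import from the literature (a corollary of Theorem~1.1 of \cite{MasurSchleimer}, in the formulation of \cite{Minsky}), with no accompanying argument. So there is no ``paper's own proof'' to compare your sketch against. Your final paragraph already anticipates this correctly: the natural and intended treatment here is simply to cite Masur--Schleimer.

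As for the sketch itself, the broad strategy --- contrapositive, extract two disks with large $d_W$-displacement, and force either a compressing disk disjoint from or inside $W$ or an $I$-bundle structure --- is in the right spirit, but several steps are not sound as written. The pigeonhole claim that essential arc systems in $W$ with a fixed number of endpoints fall into finitely many classes ``modulo Dehn twists along $\partial W$'' is false in general (arc complexes are infinite even after quotienting by boundary twists), so you cannot conclude that $D$ and $D'$ have the \emph{same} combinatorial pattern on $W$. The gluing of $D$ to $D'$ ``along the matching arcs on $\partial W$'' does not make sense as stated, since the arcs live on $\Sigma$, not on the disks, and there is no canonical identification. The genuine Masur--Schleimer argument instead analyzes a single disk, surgers it against $\partial W$, and studies the resulting pieces and interval-bundle structures; the dichotomy you describe is obtained not from a pair of far-apart disks but from the internal structure of how a disk meets $W$. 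So if you wanted a self-contained proof you would need to follow their hole-classification machinery rather than the two-disk pigeonhole route.
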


The reason for the assumption that $d(A,B)\geq 3$ in Lemma \ref{maintheorem} comes from the next lemma.\\

\begin{lem}
\label{distance3}
Let $A$, $B$ and $\Sigma$ be as defined above, where $d(A,B)\geq 3$. Let $Y$ be a subsurface of $\Sigma$ to which $A$ has unbounded diameter in the subsurface projection. In this case $B$ has diameter bounded by $\delta(g)$ in the subsurface projection to $Y$. The same is true with $A$ and $B$ interchanged.
\end{lem}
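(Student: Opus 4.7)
Argue by contradiction: assume both $A$ and $B$ have unbounded diameter under the subsurface projection to $Y$. Applying Theorem \ref{masurschleimer} separately to each handlebody set, one of the three conditions (1), (2), (3) of that theorem must hold for the pair $(A, Y)$, and likewise for $(B, Y)$. This yields a $3 \times 3$ case analysis, and in each case I aim to derive a contradiction with $d(A, B) \geq 3$.

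\textbf{Combinations using only (1) or (2).} If condition (1) or (2) holds for $(A, Y)$, there is a curve $\alpha \in A$ lying either in the complement of $Y$ or in $Y$ itself. Either way $\alpha$ is disjoint from every component of $\partial Y$, so for any boundary curve $y$ of $Y$ one has $d_{\mathcal{C}(\Sigma)}(\alpha, y) \leq 1$. Applying the same reasoning on the $B$-side produces $\beta \in B$ with $d(\beta, y) \leq 1$. Concatenating these short paths through $y$ gives
\begin{equation*}
d(A, B) \leq d(\alpha, y) + d(y, \beta) \leq 2,
\end{equation*}
contradicting $d(A, B) \geq 3$.

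\textbf{The $I$-bundle case.} If condition (3) holds for $(A, Y)$ there is an essential $I$-bundle $I \subset H_A$ with $Y$ a horizontal boundary component and a vertical annulus $V \subset \Sigma$; the core $v$ of $V$ is isotopic to a component of $\partial Y$, but $v$ need not belong to $A$, so the triangle-inequality argument above does not apply verbatim. My plan is to split into subcases. If $B$ satisfies (1) or (2), I would produce a compressing disk of $H_A$ supported in $I$ whose boundary avoids a chosen component of $\partial Y$ (using the $I$-bundle structure and the boundary combinatorics of the base of $I$), thereby recovering a certificate curve in $A$ disjoint from some $y \in \partial Y$ and reducing to the previous triangle-inequality step. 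If condition (3) holds on both sides simultaneously, the plan is to glue the two $I$-bundles along $Y$ inside $N_f$ to construct a closed essential surface whose genus is bounded in terms of $g(Y)$ and of $g$, and then combine this surface with Lemma \ref{lem:genusdistance} to violate $d(A, B) \geq 3$.

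\textbf{Main obstacle.} The hardest step is case $(3, 3)$: neither handlebody set is guaranteed to contain a curve close to $\partial Y$ in $\mathcal{C}(\Sigma)$, so the contradiction must be extracted from a surface-theoretic construction inside $N_f$ rather than from a short path in the curve complex. Controlling the topology of the essential surface produced from the two $I$-bundles — in particular keeping its genus small enough to conflict with the Hempel distance bound via Lemma \ref{lem:genusdistance} — is the central technical challenge in carrying out this approach.
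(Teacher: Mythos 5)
Your case framework is sound and the $(1\text{-}2,\,1\text{-}2)$ combinations are handled exactly as in the paper's proof, via short paths through $\partial Y$. However, both $I$-bundle cases contain genuine gaps.

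For case $(3,\,1\text{ or }2)$: you propose to exhibit a compressing disk of $H_A$ supported in the $I$-bundle whose boundary is disjoint from a chosen component of $\partial Y$, and then run the triangle inequality. This works only when $Y$ has more than one boundary component. The paper's own argument for this case shows why the reduction can fail: when the $B$-side certificate is a curve $b$ contained in $Y$ or $\Sigma\setminus Y$, one deduces that every element of $A$ meets every component of $\partial Y$, and this forces $Y$ to have exactly one boundary component (and hence genus at least one). In that situation no compressing disk of $H_A$ can have boundary missing $\partial Y$, so your plan does not apply. The paper then argues more delicately: choose $a\in A$ whose intersection with $Y$ is a single arc missing a prescribed interior curve, and split into subcases according to whether $b\subset Y$ or $b\subset \Sigma\setminus Y$. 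That finer argument is not recoverable from your outline.

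For case $(3,3)$: the strategy of building a closed essential surface from the two $I$-bundles and invoking Lemma \ref{lem:genusdistance} cannot deliver the required contradiction. Under the hypothesis $d(A,B)\geq 3$, Lemma \ref{lem:genusdistance} only forbids closed incompressible surfaces of genus $1$ (a genus-$k$ incompressible surface yields $d(A,B)\leq 2k$, so $k\geq 2$ is already consistent with $d(A,B)\geq 3$). You would therefore have to produce a genus-one incompressible surface. But a surface assembled from an $I$-bundle over a non-annular horizontal boundary $Y$ together with the other horizontal boundary component and vertical annuli will in general have genus at least two, growing with the complexity of $Y$ — so the lemma gives no constraint whatsoever. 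There is also the separate issue of verifying that the glued surface is actually incompressible in $N_f$. The paper handles $(3,3)$ instead by the direct (admittedly terse) observation that the two $I$-bundle structures already furnish compressing disks of $H_A$ and $H_B$ at distance at most $2$ in $\mathcal{C}(\Sigma)$; no essential surface or appeal to Lemma \ref{lem:genusdistance} is involved.
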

\begin{proof}
The lemma will first be proven for subsurface projections to annuli, and then for other subsurfaces.\\

When $Y$ is an annulus, condition 3 of Theorem \ref{masurschleimer} does not apply, because the assumption that at least one vertical annulus of $I$ is in $\Sigma$ would prevent $I$ from being essential. In this case, the lemma can be seen to follow directly, because if there is both a curve $a\in A$ contained in $Y$ or $\Sigma\setminus Y$ and a curve $b\in B$ contained in $Y$ or $\Sigma\setminus Y$, this would imply that $d(a,b)\leq 2$, contradicting the assumption that $d(A, B)\geq 3$. Similarly, if $Y$ is not an annulus, whenever the third item of Theorem \ref{masurschleimer} is ruled out for both $H_{A}$ and $H_{B}$, the lemma holds for the subsurface $Y$.\\

Now suppose $Y$ is not an annulus, and $Y$ is a component of the horizontal boundary of an essential $I$-bundle in $H_{A}$, as in condition 3 of Theorem \ref{masurschleimer}. It will first be shown that $Y$ cannot also be a component of the horizontal boundary of an essential $I$-bundle in $H_{B}$, as this would imply $d(A,B)\leq 2$. Denote by $y_{1}$ a boundary component of $Y$ on the boundary of a vertical annulus of $I$ lying in $\Sigma$. Since $Y$ is not an annulus, there is an essential arc $\gamma_{1}$ contained in $Y$ with endpoints on $y_{1}$. A disk $\mathcal{D}_{a}$ embedded in $H_{A}$ is obtained by taking the union of the fibers of $I$ that each have an endpoint on $\gamma_{1}$. It follows that $\partial \mathcal{D}_{a}$ is a curve $a$ in $A$. The curve $a$ is disjoint in $\Sigma$ from a curve $\gamma$ in $Y$ obtained as a concatenation of $\gamma_{1}$ with a subarc of $y_{1}$ joining the endpoints on $y_{1}$ of $\gamma_{1}$. If $Y$ is also a component of the horizontal boundary of an essential $I$-bundle in $H_{B}$, a curve $b$ in $B$ disjoint from $\gamma$ will now be constructed. If $y_{1}$ is also a boundary component of a vertical annulus in $\Sigma$ of an essential $I$-bundle in $H_{B}$, $b$ is the boundary of the disk obtained by taking the union of the fibers of the $I$-bundle in $H_{B}$ with an endpoint on $\gamma_{1}$. Otherwise, denote by $y_{2}$ a boundary component of a vertical annulus in $\Sigma$ of the $I$-bundle in $H_{B}$, and let $\gamma_{2}$ be an essential arc in $Y$ disjoint from $\gamma_{1}$ with its endpoints on $y_{2}$. The existence of $\gamma_{2}$ is guaranteed by the assumption that $Y$ is an essential subsurface that is not an annulus or 3-holed sphere. The curve $b\in B$ is disjoint from $\gamma$, implying that $d(A,B)\leq 2$ as required.\\

It has now been shown that if $B$ also has an unbounded diameter in the subsurface projection to $Y$, there must be a $b\in B$ contained in $Y$ or $\Sigma\setminus Y$. Since $d(A,B)\geq 3$, this means that every element of $A$ must intersect every boundary component of $Y$. If $Y$ has more than one boundary component, the construction in the previous paragraph gives an element of $A$ that does not intersect every boundary component of $Y$. It follows that $Y$ can have only one boundary component. However, if $Y$ has only one boundary component, it must have genus at least one. The construction in the previous paragraph shows how to construct an element of $A$ intersecting $Y$ along an arc disjoint from any fixed curve in the interior of $Y$. If the curve $b$ is in $Y$, this gives a contradiction to the assumption that $d(A,B)\geq 3$. It follows that $b$ must be in $\Sigma\setminus Y$. However, this also gives a contradiction, because there is a curve in $Y$ disjoint from both $a$ and $b$.
\end{proof}

\subsection{Three subsets of a handlebody set}
\label{sec:largesubsurface}
This subsection describes how to obtain annular subsurfaces to which $A$ has large diameter in the subsurface projection. The construction will be used to partition $A$ into three subsets. Different techniques will then be used to prove Lemma \ref{maintheorem} for each of these subsets.\\

\textbf{The disjoint sum subset }$\mathbf{s_{1}}$\textbf{ of }$\mathbf{A}$\textbf{.} Recall the assumption that $A$ is distance 1 from $c$. Define a subset $s_{1}$ of $A$, the ``disjoint sum subset'', consisting of all the curves in $A$ that are disjoint from $c$, as well as their band sums. To see that $s_{1}$ does not have bounded diameter in the subsurface projection to $\mathcal{A}_{c}$, note that an arc in the definition of band sum of two curves disjoint from $c$ can be chosen to wrap around $c$ any number of times. However, for this reason, $s_{1}$ is invariant under the action of $D_{c}$.\\

\begin{rem}
\label{s1rem}
Although the subset $s_{1}\subset A$ is invariant under $D_{c}$, Theorem 1.11 of \cite{UO} (see Lemma \ref{ctwistinglemma2}) implies that $A\setminus s_{1}$ is not.
\end{rem}

The disjoint sum subset is not the only of the 3 subsets with unbounded diameter in the subsurface projection to $\mathcal{A}_{c}$, as illustrated in the next example. \\

\begin{ex}
\label{simpletwist}
Let $c_{1}$ be a curve in $A$ that intersects $c$, and $d_{1}$ be a curve in $A$ disjoint from $c$ and $c_{1}$. As illustrated in Figure \ref{increasetwisting} part (a), it is possible to take a band sum of $c_{1}$ with $d_{1}$ in such a way that a connected component of $c_{1}\cap \mathcal{A}_{c}$ twists once more around $c$ than previously. Call the resulting curve $c_{2}$. As shown in Figure \ref{increasetwisting} part (b), it is then possible to take a band sum of $c_{2}$ with a second copy of $d_{1}$, in such a way as to increase by one the number of times a connected component of $c_{2}\cap \mathcal{A}_{c}$ twists around $c$. Call the resulting curve $c_{2}$. A sequence of curves $\{c_{i}\}$ is obtained by iteration.\\

\begin{figure}[!thpb]
\centering
\includegraphics[width=0.7\textwidth]{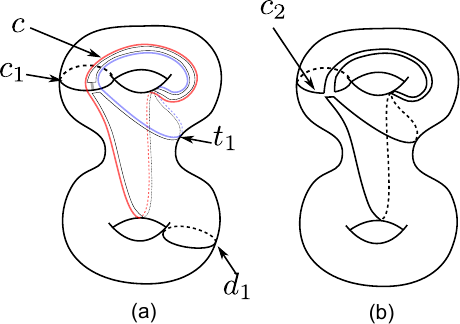}
\caption{Part (a) shows an example of curves $c_{1}$, $d_{1}$, $t_1$ and $c$. The curves $c$ is red, $t_{1}$ is blue and the thin line is $d_{1}$. The thick grey arc is the arc along which the band sum is performed. The curve $c_{2}$ is shown in part (b).}
\label{increasetwisting}
\end{figure}

By construction, the set $\{c_{i}\}$ has infinite diameter in the subsurface projection to $\mathcal{A}_{c}$, but it also has infinite diameter in the subsurface projection to another annulus. This annulus has core curve $t_{1}$ disjoint from both $c$ and $d_{1}$. 
\end{ex}

In Example \ref{simpletwist}, each band sum alters $c_{i}$ in such a way that it is twisted around  $c$, then backwards around a curve $t_{1}$ that cobounds an annulus with $c$ in $H_{A}$. Outside of the annuli $\mathcal{A}_{c}$ and $\mathcal{A}_{t_{1}}$, $c_{i}$ is unchanged. The letter $t$ (sometimes with a subscript) will be used to denote a simple curve in $\Sigma$, disjoint from and not homotopic to $c$ in $\Sigma$, such that $c\cup -t$ is the boundary of an annulus in $H_{A}$. The set of all such curves in $\Sigma$ will be denoted by $T$.\\

\begin{lem}[Theorem 1.11 of \cite{UO}] 
\label{ctwistinglemma2}
Suppose $\phi:H_{A}\rightarrow H_{A}$ is an automorphism with the property that $\phi|_{\Sigma}$ is a composition of Dehn twists around disjoint curves. Then $\phi$ is a composition of Dehn twists around the boundaries of a disjoint collection of properly embedded disks and annuli.
\end{lem}

An important special case of Lemma \ref{ctwistinglemma2} is that when $t$ and $c$ are homotopically nontrivial curves in $\Sigma$ that bound an embedded annulus $L(t,c)$ in the interior of $H_{A}$, the mapping class $D^{-1}_{t}D_{c}$ maps $A$ to itself.\\

\textbf{The $c$-bounded and $c$-unbounded subsets $\mathbf{s_{2}}$ and $\mathbf{s_{3}}$ of $\mathbf{A}$.}  Suppose $\{a_{1}, \ldots, a_{3g-3}\}$ is a choice of pairwise disjoint curves in $A$ as stated under ``Basic assumptions'' in Subsection \ref{sec:handlebody}. The $c$-bounded subset $s_{2}$ of $A\setminus s_{1}$ is defined to be the set of curves within radius $R$ of the curves $Z=\{a_{m}, \ldots, a_{3g-3}\}$ in the subsurface projection to $\mathcal{A}_{c}$, where $R$ should be thought of as a sufficiently large natural number whose size will be estimated later. The choice of $Z$ will only affect the size of $R$, not the existence of an $R$ with the desired properties. A specific choice for $Z$ will be given below. The $c$-unbounded subset $s_{3}$ is defined to be $A\setminus (s_{1}\cup s_{2})$.\\ 

The strategy is to show that $R$ can be chosen large enough such that $s_{2}$ contains every curve $s$ in $A\setminus s_{1}$ with the property that Theorem \ref{MM} allows the existence of a geodesic $\gamma(s,b)$ from $s$ to a point $b\in B$ such that every vertex of $\gamma(s,b)$ intersects every $t\in T$. Suppose $Z$ contains a curve $z$ in $A\setminus s_{1}$ with the smallest possible intersection number with $c$. Fix $b\in B$ and choose $z$ in such a way that 
\begin{equation}
\label{firstconstant}
K_{1}:=\max_{Y\in W_{T}}d_{Y}(z, b)
\end{equation}
is minimised, where $W_{T}$ is the subset of compact, connected, essential, embedded subsurfaces of $\Sigma\setminus c$, each of which contains a curve in $T$. The constant $K_{1}$ is finite because it is less than the length of a hierarchy path (as defined in \cite{MasurMinsky}) between two points.\\

The purpose of the next corollary is to understand how to construct curves in $s_{3}$.\\

\begin{figure}[!thpb]
\centering
\includegraphics[width=0.8\textwidth]{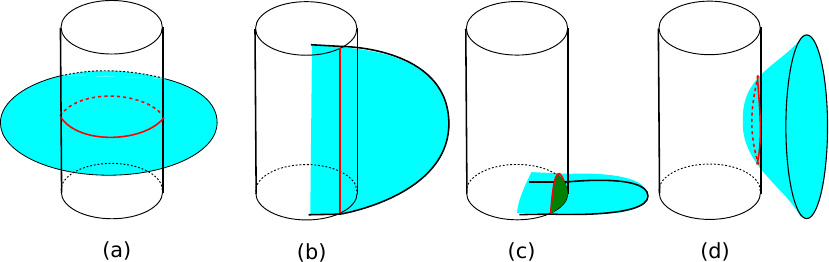}
\caption{The different ways an annulus $L(t,c)$ embedded in $H_{A}$ with boundary curves $t$ and $c$ in $\Sigma$ can intersect a disk $\mathcal{D}_{a}$ for $a$ in $A$. A portion of $\mathcal{D}_{a}$ is shown in blue. If $L(t,c)$ and $\mathcal{D}_{a}$ are embedded and homotoped to intersect transversely, connected components of the intersection are either homotopic to the core curve of $L(t,c)$ (a), arcs with one endpoint on $t$ and one on $c$ (b), arcs with both endpoints on the same boundary curve of $L(t,c)$ (c) or curves bounding a disk in both $L(t,c)$ and $\mathcal{D}_{a}$ (d).  In part (a) the curve shown in red bounds a disk in $\mathcal{D}_{a}$. This type of intersection is possible only if $c\in A$. There are intersections of the type shown in part (b) iff $a\in A\setminus s_{1}$. The intersection in part (d) can be removed by a homotopy of $\mathcal{D}_{a}$.}
\label{diskannulus}
\end{figure}

\begin{lem}
\label{25and27}
For any $t\in T$, every curve in $s_{3}$ can be obtained as $D^{k}_{c}D^{-k}_{t}(a)$ for some $a\in s_{2}$ and some $k\in \mathbb{Z}$. 
\end{lem}
\begin{proof}
Choose and fix $s\in s_{3}$ and $t\in T$. The integer $k(s)$ can then be chosen such that $d_{\mathcal{A}_{c}}(D^{-k}_{c}D^{k}_{t}(s), Z)\leq R$. By Lemma \ref{ctwistinglemma2}, $ D^{-k}_{c}D^{k}_{t}(s)\in A$. Since $D^{-k}_{c}D^{k}_{t}(s)$ is not in $s_{3}$, it remains to show that $D^{-k}_{c}D^{k}_{t}(s)$ is not in $s_{1}$. This follows from Remark \ref{s1rem} and the following construction showing that $s_{1}$ is also invariant under Dehn twists around $t$. \\

The remainder of this proof involves understanding the different ways a disk embedded in $H_{A}$ with boundary in $A$ can intersect an annulus $L(t,c)$ embedded in $H_{A}$ with boundary curves $t$ and $c$ in $\Sigma$. Examples of the different possibilities are illustrated in Figure \ref{diskannulus}.\\

A disk with boundary $e$ in $s_{1}$ is obtained as a band sum of curves disjoint from $c$. Cut the bands intersecting $c$ to obtain a set of curves $\{e_{1},\ldots, e_{i}\}$ in $s_{1}$ that are disjoint from $c$. This corresponds to performing bigon surgeries on $\mathcal{D}_{e}$, where an example of one such bigon is shown in green in Figure \ref{diskannulus} (c). It will now be shown that this gives a set of curves $\{e_{1},\ldots, e_{i}\}$ in $A$ that are all disjoint from both $c$ and $t$. Suppose otherwise, then there is an $e_{j}\in \{e_{1},\ldots, e_{i}\}$ that has nonzero geometric intersection number with $t$. The intersection of the disk $\mathcal{D}_{e_{j}}$ with $L(t,c)$ cannot contain a loop homotopic to a nonzero multiple of the core curve of  $L(t,c)$ as shown in part (a) of Figure \ref{diskannulus}, because then the Jordan curve theorem in $\mathcal{D}_{e_{j}}$ would imply that $c$ and $t$ are in $A$. Since $e_{j}$ does not intersect $c$, ignoring intersections of $\mathcal{D}_{e_{j}}$ with $L(t,c)$ that can be removed by a homotopy of $\mathcal{D}_{e_{j}}$, the only type of intersection between $\mathcal{D}_{e_{j}}$ and $L(t,c)$ is as shown in Figure \ref{diskannulus} (c). It follows that the intersection of $\mathcal{D}_{e_{j}}$ with $L(t,c)$ determine bigon surgeries that can be performed on $\mathcal{D}_{c_{j}}$ to obtain a set of disks with boundary disjoint from $L(t,c)$. Since $c_{j}$ has nonzero intersection number with $t$, the boundaries of these disks cannot all be homotopically trivial in $\Sigma$. This construction shows that elements of $s_{1}$ can be obtained as band sums of curves disjoint from both $c$ and $t$. Consequently, $s_{1}$ is invariant under $D_c$ and  $D_t$, for the same reason that $s_{1}$ is invariant under $D_{c}$, see for example the comment preceeding Remark \ref{s1rem}.
\end{proof}

Recall that $z\in Z$ is the curve in the definition of $s_{2}$ in Equation \eqref{firstconstant}.

\begin{lem}
\label{intersectionnumber2}
 Let $d$ be a curve in $s_{3}$ for which $d_{\mathcal{A}_{c}}(z, d)=|k|$ for $k\in \mathbb{Z}$, in particular, $|k|\geq R$. Suppose the sign of $k$ is chosen such that $i(z,D_{c}^{-k}(d))<i(z,d)$. It holds that $i(z,D_{c}^{-k}(d))\geq |k|-1$. Moreover there is a homotopy class of arcs $d^{a}$ of $d\cap (\Sigma\setminus c)$ and $z^{a}$ of $z\cap (\Sigma\setminus c)$ such that $d^{a}$ and $z^{a}$ both intersect every curve $t\in T$.
\end{lem}

\begin{proof}
%The surgery construction in the  proof of Lemma \ref{25and27} showed that every curve in $A\setminus s_{1}$, in particular the curve $z$, has nonzero geometric intersection number with every $t\in T$.\\

Since $|k|>1$, the assumption that $c$ is not in $A$ implies that the homology class $[D_{c}^{-k}(d)]-[z]$ in $H_{1}(H_{A};\mathbb{Z})$ is nonprimitive. The surgery construction in the proof of Lemma \ref{25and27} showed that every curve in $A\setminus s_{1}$, in particular $z$, must intersect every primitive homology class of which this is a multiple, i.e. there exist arcs of $\mathcal{D}_{z}\cap L(t,c)$ as shown in Figure \ref{diskannulus} (b). Since $[D_{c}^{-k}(d)]-[z]$ is at least a $|k|$ multiple of a primitive homology class, the bound $i(z,D_{c}^{-k}(d))\geq |k|-1$ follows immediately.\\

To prove the statement about the arcs $d^{a}$ and $z^{a}$, recall from the proof of Lemma \ref{25and27} that by homotoping the disk if necessary, the intersections of the disks $\mathcal{D}_{d}$ and $\mathcal{D}_{z}$ with $L(c,t_{i})$ can all be made to be of the type shown in Figure \ref{diskannulus} (b) or (c). This will be assumed from now on. Since $d$ and $z$ are not in $s_{1}$, it was also shown that not all intersections can be of the type shown in Figure \ref{diskannulus} (c). Then $d^{a}$ and $z^{a}$ can be chosen to be arcs containing one side of a bigon of the type shown in Figure  \ref{diskannulus} (b). For the curve $t\in T$ on the boundary of the annulus $L(t,c)$, it is clear that this arc intersects $t$. To show that such an arc intersects every curve in the set $T$, there are two cases to consider, as illustrated in Figure \ref{arcfigure}. In the first case, the existence of a $t'\in T$ for which $d^{a}$ does not intersect $t'$ implies that the intersections of $\mathcal{D}_{d}$ with $L(c,t')$ could all be taken to be of the type shown in Figure \ref{diskannulus} (c). This contradicts the assumption that $d\in (A\setminus s_{1})$. In the second case, if $d^{a}$ does not intersect $t'$, $d$ has a different algebraic intersection number with $t'$ as with $c$, contradicting Remark \ref{algebraicint} and Lemma \ref{ctwistinglemma2}.
\end{proof}

\begin{figure}[!thpb]
\centering
\includegraphics[width=0.5\textwidth]{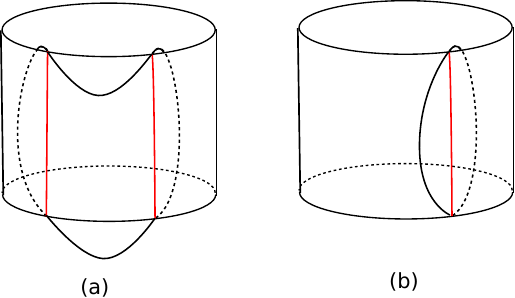}
\caption{The annulus $L(t,c)$ in $H_{A}$ has a tubular neighbourhood that can be oriented. This makes it possible to distinguish between bigons of the type shown in Figure \ref{diskannulus} (b) that are to the right of $L(t,c)$ and to the left of $L(t,c)$. Each connected component of $d\cap (\Sigma\setminus c)$ that contains one edge of such a bigon, contains an edge of two such bigons. These bigons are either both on the same side of $L(t,c)$ as shown in part (a) or both on opposite sides of $L(t,c)$ as shown in part (b).}
\label{arcfigure}
\end{figure}

\begin{rem}
\label{pairedrem}
The first part of Lemma \ref{intersectionnumber2} could alternatively have been proven by constructing a sequence $\gamma$ of multicurves in $A\setminus s_{1}$ representing a path from $z$ to $d$ in an appropriate analogue of $\mathcal{C}(\Sigma)$. Here the $i$th vertex of $\gamma$ is obtained from the $i-1$st vertex by a bigon surgery, using a bigon in a connected component of $\mathcal{D}_{d}\cap (H_{A}\setminus \mathcal{D}_{\gamma_{i}})$. As a result of the bigon surgery, $i(\gamma_{i}, d)\leq i(z,\gamma_{i-1})-2$. Lemma \ref{intersectionnumber2} implies that minimum number, $|k|-1$, of intersections between $z$ and $d$ resulting from the constraint $d_{\mathcal{A}_{c}}(z, d)=|k|$ are ``paired'' with at least the same number of intersections between $z$ and $D_{c}^{-k}(d)$. This inspires the next definition.
\end{rem}

\textbf{Paired intersections.} For $a_{1}$ and $a_{2}$ in $A$, two crossings of $a_{1}$ and $a_{2}$ will be said to be \textit{paired} if the two crossings are the endpoints of an embedded arc in the intersection of $\mathcal{D}_{a_{1}}$ with $\mathcal{D}_{a_{2}}$. Similarly, for $a_{1}\in a$ and $t\in T$, two crossings of $a_{1}$ with the multicurve $t\cup c$ will be called \textit{paired} if the crossings are the endpoints of an arc in the intersection of $\mathcal{D}_{a_{1}}$ with $L(t,c)$. A crossing of $a_{1}$ with $t$ will be said to be \textit{paired with a crossing in }$\mathcal{A}_{c}$ if it is paired with a crossing on the boundary curve $c$ of $L(t,c)$.

\begin{rem}
\label{intrem}
For $d\in A\setminus s_{1}$, the intersection of $\mathcal{D}_{d}$ with $L(t,c)$ gives at least two bigons in $\mathcal{D}_{d}\cap (H_{A}\setminus L(t,c))$. If these two bigons are on opposite sides of $L(t,c)$ as in Figure \ref{arcfigure} (b), a curve $z\in A\setminus s_{1}$ with $i(c,z)=1$ can be constructed by attaching the two bigons to form $\mathcal{D}_{z}$. If the two bigons are on the same side of $L(t,c)$, a curve $z\in A\setminus s_{1}$ with $i(c,z)=2$ can be constructed by attaching the two bigons to a rectangle in $H_{A}$ with a pair of opposite sides on $L(t,c)$, as shown in Figure \ref{arcfigure} (a).
\end{rem}
% Ignore connected components of $d\cap (S\setminus \{c\})$ and $z\cap (S\setminus \{c\})$ that comprise one side of a bigon, as shown in Figure \ref{diskannulus} c). 
%\end{proof}
%When $\tilde{i}(d,c)=i(d,c)=1$ and $\tilde{i}(z,c)=i(z,c)=1$ the arc $d^{1}$ of $d\cap \Sigma\setminus \{c\}$ is the single connected component of $d\cap \Sigma\setminus \{c\}$, and the arc $z^{1}$ is the single connected component of $z\cap \Sigma\setminus \{c\}$. In the case that $\tilde{i}(c,d)<i(c,d)$ or $\tilde{i}(c,z)<i(c,z)$ the by now familiar construction with bigon surgeries shows that $d$ (respectively $z$) can be expressed as a band sum of $d'$ (respectively $z'$) with curves in $s_{1}$ disjoint from $c$. Consequently, $\tilde{i}(d',c)=i(d',c)$ and $\tilde{i}(z',c)=i(z',c)$, and the previous argument gives $i(z',D_{c}^{-k}(d'))\geq (k-2)\tilde{i}(d',c)\tilde{i}(z',c)$. Now $d'\cap \Sigma\setminus \{c\}$ has $\tilde{i}(d',c)$ connected components, each of which  eacand $z'\cap \Sigma\setminus \{c\}$ has $\tilde{i}(z',c)$ connected components, it is possible to chose one connected
%\end{proof}

It will now be shown that, within the $c$-unbounded subset $s_{3}$, moving out to infinity in the subsurface projection to $\mathcal{A}_{c}$ necessarily also involves moving out to infinity in some other subsurface. This will be used to obtain lower bounds on $d(D_{c}^{n}(A), B)$ for large $|n|$. The next theorem will be used to relate the intersection number from Lemma \ref{intersectionnumber2} to a large distance in a subsurface projection. Some notion is needed: $c(\Sigma)$ is the set of curves on $\Sigma$, $W$ is the set of essential, connected, nonannular subsurfaces of $\Sigma$ that are not thrice holed spheres and $[n]_{k}$ is the cutoff funtion defined as follows
\begin{equation*}
[n]_{k}=
\begin{cases}
n\text{ if }n> k,\\
0\text{ otherwise}
\end{cases}
\end{equation*}

\begin{lem}[Theorem 1.5 of \cite{Wat}]
\label{schleimerlem}
For every two vertices $v_{1}$ and $v_{2}$ of $\mathcal{C}(\Sigma)$ and all $k>0$, we have 
\begin{equation*}
\log_{2} i(v_{1},v_{2})\leq V_{g}(k)\left(1+\sum_{Y\subset W}[d_{Y}(v_{1},v_{2})]_{k}+\sum_{\mathcal{A}_{l}\text{, }l\subset c(\Sigma)}\log_{2}[d_{\mathcal{A}_{l}}(v_{1},v_{2})]_{k}\right) 
\end{equation*}
where $V_{g}(k)=\left(40000(2g-2)(k+200(3g-3)\right)^{3g-1}$, and $\log_{2}(0):=0$.
%For $\sigma\in \mathbb{N}$, an upper bound of $\sigma$ on the distance between two curves $e$ and $f$ in every subsurface projection gives a uniform upper bound on $i(e,f)$, depending only on $\sigma$ and $\Sigma$. This lemma also applies when $\Sigma$ is replaced by an essential embedded subsurface, and the curves $e$ and $f$ replaced by homotopy classes of arcs obtained by taking intersections of $e$ and $f$ with the subsurface. The homotopies in this case are assumed to keep the endpoints of the arcs on the boundary of the subsurface.
\end{lem}

\begin{lem}[Lemma 11 of \cite{Me2}]
\label{numberofarcs}
Suppose $c_{1}$ and $c_{2}$ are two curves in $\Sigma$ in minimal position, for example, $c_{1}$ and $c_{2}$ are geodesics. Then the number of isotopy classes of arcs $c_{1}\cap (\Sigma\setminus c_{2})$ is uniformly bounded from above by $3g-3$.
\end{lem}

Recall that $\beta(\Sigma)$ is the constant from Theorem \ref{MM}, and $z$ is the curve in the definition of $s_{2}$, given in Equation \eqref{firstconstant}.

\begin{lem}
\label{lem:largeY}
%For any $e$ in $s_{2}$ and any $f$ in $s_{3}$ with $d_{\mathcal{A}_{c}}(e, f)=k$ sufficiently large, there is a subsurface $Y(e, f)$ in $\Sigma\setminus c$ containing a curve $t(e,f)$ in $T$ for which $d_{Y}(e,f)\geq \beta(\Sigma)$. 
Choosing $R$ in the definition of $s_{2}$ sufficiently large ensures that for every $d$ in $s_{3}$ there is a subsurface $Y(d)$ containing a curve $t(d)\in T$ for which $d_{Y(d)}(d,b)\geq \beta(\Sigma)$ for every $b\in B$. Here ``sufficiently large'' depends only on $\Sigma$.
\end{lem}

\begin{proof}
Recall that the surgery construction in the proof of Lemma \ref{25and27} showed that for every $t\in T$ there is a curve in $s_{1}$ disjoint from $t$. The assumption that $d(A,b)\geq 3$ then implies that every $b\in B$ intersects every $t\in T$. If a subsurface of $\Sigma$ contains a $t\in T$, it follows that every curve in $B$ is either contained in this subsurface or has essential intersection with the subsurface. Choosing $R$ in the definition of $s_{3}$ sufficiently large will now be shown to imply the existence of a $Y(d)$ containing a curve $t\in T$ with $d_{Y}(d,z)$ sufficiently large that the lemma then follows from Lemma \ref{distance3}.\\

Recall that  $K_{1}$ is the constant from Equation \eqref{firstconstant} and $\delta$ is the constant from Theorem \ref{masurschleimer}. The lemma will first be proven for any curve $d\in s_{3}$ with $i(d,c)=i(z,c)$ and $\hat{i}(d,c)=\hat{i}(z,c)$, where by Remark \ref{intrem}, $i(z,c)=1$ or in the case that $i(z,c)=2$, $\hat{i}(z,c)=0$. In the special case that $d=D_{c}^{k}D_{t}^{-k}(z)$, note that choosing $R$ larger than $\beta(\Sigma)+K_{1}+\delta$ will ensure that $d_{\mathcal{A}_{t}}(d,b)\geq \beta$ for all $b\in B$.\\

Informally, the notation $I(z,d)$ will mean the  ``smallest'' embedded, essential subsurface of $\Sigma\setminus c$ containing the arcs $z\cap (\Sigma\setminus c)$ and $d\cap (\Sigma\setminus c)$. The precise definition of $I(z,d)$ is given next. Let $\{z_{1}, \ldots, z_{k}\}$ be the isotopy classes of arcs of $z\cap (\Sigma\setminus c)$. 

These isotopy classes determine a set of pairwise disjoint rectangles, $\{R_{1}, \ldots, R_{k}\}$ contained in $\Sigma \setminus c$; one for each isotopy class. The rectangle $R_{i}$ corresponding to the $i$th isotopy class $z_{i}$ has a pair of opposite sides on the boundary of the subsurface $\Sigma \setminus c$, and the other pair of opposite sides given by arcs in the same isotopy class as $z_{i}$. Denote by $\Sigma_{c,k}$ the subsurface of $\Sigma\setminus c$ obtained by removing all the rectangles $\{R_{1}, \ldots, R_{k}\}$. In the subsurface $\Sigma_{c,k}$, next construct a set of rectangles $\{R'_{1'}, \ldots, R'_{k'}\}$ corresponding to the isotopy classes of arcs of $d\cap \Sigma_{c,k}$, analogous to the construction of $\{R_{1}, \ldots, R_{k}\}$. Glue the rectangles in the sets $\{R_{1}, \ldots, R_{k}\}$ and $\{R'_{1}, \ldots, R'_{k'}\}$ along common boundary points to obtain an embedded subsurface. For each boundary component of this embedded subsurface that bounds a polygon in $\Sigma\setminus c$, glue the polygon along the boundary. The resulting embedded subsurface is $I(z,d)$. \\

%\begin{figure}[!thpb]
%\centering
%\includegraphics[width=0.4\textwidth]{cloop.pdf}
%\caption{The intersection of $I(z,d)$ (shown in green) with an annular subsurface of $\Sigma$ with core curve $c$. The curve $z$ is shown in red and $d$ in black, and the arcs $a_{1}$ and $a_{2}$ are blue.}
%\label{cloop}
%\end{figure}

The difference $[z]-[D_{c}^{-k}(d)]$ is a nonprimitive homology class in $H_{1}(H_{A};\mathbb{Z})$ given by $\lambda[t]$, where $|\lambda|\geq R$. It follows that, not only does $I(z,d)$ contain a curve $t\in T$, but any disjoint union $m$ of curves and properly embedded arcs in $I(z,d)$ that intersects every curve in $T$ contained in $I(z,d)$ satisfies $i(z,m)+i(d,m)\geq R$. \\

The existence of a large subsurface projection to a subsurface $Y(d)$ of $I(z,d)$ is guaranteed by Lemmas \ref{intersectionnumber2}, \ref{schleimerlem} and \ref{numberofarcs} whenever $R$ is chosen sufficiently large. An algorithm for constructing a subsurface $Y(d)$ will now be derived.\\

If the distance between $z$ and $d$ in the subsurface projection to $I(z,d)$ is greater than or equal to $\beta +K_{1}+\delta$, set $Y(d)$ equal to $I(z,d)$. Otherwise, a subsurface $I(z,d)_{1}$ will be constructed. The construction depends on whether an arc $a_{1}$ with properties given below exists.\\

Case 1: there exists an essential, properly embedded arc $a_{1}$ in $I(z,d)$ that has small intersection numbers with $d$ and $z$. In this case, cut $I(z,d)$ along $a_{1}$ to obtain the subsurface $I(z,d)_{1}^{'}$. The notion of ``small'' intersection numbers will be defined later; for the moment it is enough to observe that an arc $a_{1}$ with sufficiently small intersection numbers with $d$ and $z$ will give a boundary component $c_{s}$ of a tubular neighbourhood of $a_{1}\cup\partial I(z,d)$ with $d_{I(z,d)}(d,c_{s})+d_{I(z,d)}(c_{s},z)\leq \beta+K_{1}+\delta$, showing that $I(z,d)$ could not be $Y(d)$.\\

If the subsurface $I(z,d)_{1}^{'}$ has any connected components consisting of $n$-holed spheres, replace each $n$-holed sphere in $I(z,d)_{1}^{'}$ by the set of annuli with core curves in the $n$-holed sphere, to which the subsurface projections of $d$ and $z$ have distance at least $\beta+K_{1}+\delta$. Also, if any connected components of $I(z,d)_{1}^{'}$ have boundary curves that define annuli to which the subsurface projections of $z$ and $d$ have distance at least $\beta+K_{1}+\delta$, add these annuli to the set of subsurfaces. Call the resulting set of subsurfaces $I(z,d)_{1}$. \\

Case 2: when there does not exist an arc $a_{1}$ as defined above, since the distance between $z$ and $d$ in the subsurface projection to $I(z,d)$ is assumed to be less than $\beta +K_{1}+\delta$, there exists a nonperipheral curve $a_{1}$ in $I(z,d)$ with small intersection numbers with $d$ and $z$. In this case, $I(z,d)_{1}^{'}$ is obtained by cutting $I(z,d)$ along $a_{1}$. The set of subsurfaces $I(z,d)_{1}$ is then obtained from $I(z,d)_{1}^{'}$ in the same way as in case 1. \\

Either every element of $I(z,d)_{1}$ has the property that the distance between $z$ and $d$ in the subsurface projection to the embedded subsurface corresponding to that element is bounded from below by $\beta(\Sigma)+K_{1}+\delta$, or the construction can be repeated on the elements not satisfying this condition to obtain $I(z,d)_{2}$. It is possible to iterate the construction $j$ times to obtain $I(z,d)_{j}$, where by Lemma \ref{numberofarcs}, $j<3g-3$. At every step of the iteration, the arc or curve $a_{i}$, $i=1, \ldots, j-1$ is chosen such that 
\begin{equation}
\label{restriction}
i(a_{i}, z)+i(a_{i},d)\leq\frac{R}{6g-6}.
\end{equation}
The justification for Inequality \eqref{restriction} is the following: Suppose there is an $i$ with $1\leq i\leq j-1$ for which \textit{every} possible choice of curve or arc $a_{i}$ violated Inequality \eqref{restriction}. In this case, for sufficiently large $R$, Lemma \ref{schleimerlem} would give that the distance between $z$ and $d$ in the subsurface projection to every element of $I(z,d)_{i-1}$ is at least $\beta(\Sigma)+K_{1}+\delta$. That being the case, set $j=i-1$ and terminate the construction.\\

 It remains to be shown that a subsurface in $I(z,d)_{j}$ contains a curve in $T$. When the choices $a_{1}, \ldots, a_{i-1}$ are all made in accordance with Inequality \eqref{restriction}, the curves and arcs $\{a_{1}, \ldots, a_{i-1}\}$ cannot have large enough intersection numbers in total with $d$ and $z$ to cut though every curve in $T$ contained in $I(z,d)$. It is therefore possible to find an element of $I(z,d)_{j}$ containing a curve in $T$. This subsurface is then called $Y(d)$. This concludes the proof of the lemma in the case that $i(d,c)=i(z,c)$.\\

To prove the lemma when $i(d,c)>i(z,c)$, note that the arcs $z^{a}$ and $d^{a}$ guaranteed by Lemma \ref{intersectionnumber2} are contained in the subsurface $I(z,d)_{1}$ and must have intersection number at least $R-1$ in this subsurface. The rest of the argument is then identical to the proof in the case that $i(d,c)=i(z,c)$.
\end{proof}

It is now possible to put all the pieces together to give a proof of Lemma \ref{maintheorem}.\\

\begin{lem}[Lemma \ref{maintheorem} from Subsection \ref{sec:Hempel}]
Suppose that $d(A,B)\geq 3$, and let $c$ be a curve, which is no longer assumed to be distance one from $A$. It follows that for all but finitely many $n\in \mathbb{Z}$, 
\begin{equation*}
d(D_{c}^{n}(A), B)\geq d(A,B)
\end{equation*}
\end{lem}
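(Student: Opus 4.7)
The plan is to remove the standing assumption $d(A,c)=1$ made throughout Subsections \ref{sec:handlebody}--\ref{othersubsurfaces} by splitting into cases according to $d(A,c)$ and $d(B,c)$, with the case $d(A,c)=1$ (or $d(B,c)=1$) being exactly what those subsections have already proved.

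The case $d(A,c)=0$, namely $c\in A$, is immediate: since $c$ bounds a disk in $H_A$, the Dehn twist $D_c$ extends to a self-homeomorphism of $H_A$ which permutes the disk set, so $D_c^n(A)=A$, and therefore $d(A, D_c^n(B)) = d(D_c^{-n}(A), B) = d(A,B)$ for every $n$, using that $D_c$ acts as an isometry of $\mathcal{C}(\Sigma)$. The case $c\in B$ is symmetric. The case $d(B,c)=1$ reduces to the case $d(A,c)=1$ via the same isometry $d(A, D_c^n(B)) = d(D_c^{-n}(A), B)$, after interchanging the roles of $A$ and $B$ (and replacing $n$ by $-n$).

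The remaining case is $d(A,c)\geq 2$ and $d(B,c)\geq 2$, which I would treat as follows. Under this hypothesis, no element of $A$ is disjoint from or isotopic to $c$, so conditions (1) and (2) of Theorem \ref{masurschleimer} applied to the annular subsurface $\mathcal{A}_c$ are ruled out. The $I$-bundle condition (3) should also be excluded: the analysis in Subsection \ref{sec:largesubsurface} shows that every curve in $A$ with large projection to $\mathcal{A}_c$ lies in $s_1\cup s_2$, and both of these sets are built using band sums involving at least one curve in $A$ disjoint from $c$, which is impossible when $d(A,c)\geq 2$. Hence $A$, and symmetrically $B$, has bounded diameter in $\pi_{\mathcal{A}_c}$. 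Since $D_c$ acts by translation on $\pi_{\mathcal{A}_c}$, this yields $d_{\mathcal{A}_c}(a, D_c^n(b)) \geq |n| - C$ uniformly in $a\in A$ and $b\in B$, where $C$ depends only on $\Sigma$, $A$, and $B$. For $|n|$ large enough that this quantity exceeds the Masur--Minsky bounded geodesic image constant $M_0$, every geodesic in $\mathcal{C}(\Sigma)$ from any $a$ to any $D_c^n(b)$ must contain a vertex $v$ with $d(v,c)\leq 1$. Such a $v$ is fixed by $D_c$, so applying $D_c^{-n}$ to the subpath from $v$ to $D_c^n(b)$ produces a path of equal length from $v$ to $b$, whence $d(a, D_c^n(b)) = d(a,v) + d(v,b) \geq d(A,v) + d(v,B) \geq d(A,B)$ by the triangle inequality. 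Minimising over $a,b$ gives $d(A, D_c^n(B)) \geq d(A,B)$ for all but finitely many $n$.

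The main obstacle I foresee is rigorously justifying that the $I$-bundle condition of Theorem \ref{masurschleimer} is incompatible with $d(A,c)\geq 2$; the argument above approaches this from the constructive side (by showing that the subsets $s_1,s_2$ of $A$ are empty in this situation), but one should verify that any $I$-bundle configuration as in condition (3) necessarily produces a disk-bounding curve disjoint from $c$. If that implication fails, the last case would need to be supplemented by a direct treatment of the $I$-bundle case, adapted from the subsurface-projection methods of Subsection \ref{othersubsurfaces}.
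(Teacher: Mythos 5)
Your case decomposition is correct and matches the paper's: $c\in A$ or $c\in B$ is trivial by isometry; $d(A,c)\geq 2$ and $d(B,c)\geq 2$ gives bounded diameter for both handlebody sets in $\pi_{\mathcal{A}_c}$ and then the Proposition \ref{twocurves} / bounded-geodesic-image argument applies; and $d(A,c)=1, d(B,c)=1$ cannot occur when $d(A,B)\geq 3$. Your worry about the $I$-bundle clause of Theorem \ref{masurschleimer} for $W=\mathcal{A}_c$ is unnecessary: as already recorded in the proof of Lemma \ref{distance3}, condition (3) simply does not arise for annular subsurfaces, so in the case $d(A,c)\geq 2$, $d(B,c)\geq 2$ the bounded-diameter conclusion is immediate and you need not try to rule it out constructively.

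The genuine gap is the hard case $d(A,c)=1$, $d(B,c)\geq 2$ (and its mirror image). You write that this case is ``exactly what those subsections have already proved,'' but Subsections \ref{sec:handlebody}--\ref{othersubsurfaces} only develop machinery under the standing assumption $d(A,c)=1$ (the decomposition $A=s_1\cup s_2\cup s_3$, Lemmas \ref{ctwistinglemma}--\ref{lem:largeY}, Corollary \ref{corollary}, the choice of $R$); they do not establish the inequality $d(A,D_c^n(B))\geq d(A,B)$. Indeed, the theorem for $s_1$ follows because $s_1$ is $D_c$-invariant, and the theorem for $s_2$ follows from the bounded-diameter argument (by the choice of $R$), but the argument for $s_3$ is the entire remaining content of the paper's proof in Subsection \ref{finalsubsection} and is nontrivial: one must show that any geodesic from $D_c^{-n}(s_3)$ to $B$ shorter than $d(A,B)$ would have to pass through a vertex disjoint from some $t_i$ or from an element of $A$, and in the first of these two sub-cases one then constructs from that geodesic a path $\gamma'$ from $A$ to $B$ of no greater length, using $c$-twisting band sums to ``untwist'' back into $A$ while keeping track of the annular projections to both $\mathcal{A}_c$ and $\mathcal{A}_{t_i}$. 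None of this appears in your proposal. As written, your proof covers the easy cases correctly but omits the case that carries all the difficulty.
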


\begin{proof}
If $A$ and $B$ could each be shown to have finite diameter in the subsurface projection to $\mathcal{A}_{c}$, the theorem would follow from the same argument as for Proposition \ref{twocurves}. By Theorem \ref{masurschleimer}, this is the case for example when $d(c, A)\geq 2$ and $d(c, B)\geq 2$. This special case of the lemma was also proven in Theorem 1.4 of \cite{Yoshizawa}. Alternatively, if $D_{c}$ takes $A$ or $B$ to itself, e.g. $c\in B$ or $c\in A$, the lemma also follows. It remains to prove the lemma under the assumption that $d(c, A)=1$, $d(c,B)\geq 2$ and $d(A,B)\geq 3$; the argument in the case that $d(c, B)=1$, $d(c,A)\geq 2$ is symmetric.\\

The remaining case of the lemma will be proven by showing the following three statements. The first statement is that $d(D_{c}^{n}(s_{1}), B)= d(s_{1},B)$ for every $n\in \mathbb{Z}$. The second statement is that for all but finitely many $n\in \mathbb{Z}$, $d(D_{c}^{n}(s_{2}), B)\geq d(s_{2},B)$. The third statement is that for all but finitely many $n\in \mathbb{Z}$, $d(D_{c}^{n}(s_{3}), B)\geq d(s_{2},B)$.\\

The first statement is a consequence of the first part of Remark \ref{s1rem}. By Lemma \ref{distance3} the second statement follows from an identical argument to that given in the proof of Proposition \ref{twocurves}, because both $s_{2}$ and $B$ have bounded diameters in the subsurface projection to $\mathcal{A}_{c}$. The third statement follows from a double iteration of the construction in the proof of Proposition \ref{twocurves}, as will now be explained.\\

\begin{figure}[!thpb]
\centering
\includegraphics[width=\textwidth]{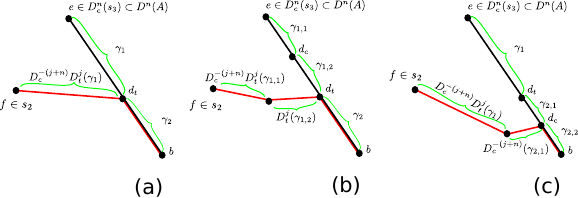}
\caption{For sufficiently large $n$, a geodesic $\gamma$ from $e\in D_{c}^{n}(s_{3})$ to $b\in B$ determines a path of the same length from $f=D_{c}^{-(j+n)}D_{t}^{j}(e)\in s_{2}$ to $b\in B$. Part (a) shows the construction of the path when the vertex $d_{t}$ is disjoint from $c$ as well as $t$, part (b) shows the construction when there is a vertex $d_{c}$ disjoint from $c$ in the segment $\gamma_{1}$ of $\gamma$, and part (c) shows the construction when there is a vertex $d_{c}$ disjoint from $c$ in the segment $\gamma_{2}$ of $\gamma$.}
\label{finalfigure}
\end{figure}

Denote by $e$ a curve in $D_{c}^{n}(s_{3})$, where $|n|$ is large enough to ensure that $d_{\mathcal{A}_{c}}(e, B)>\beta$. Theorem \ref{MM} together with Lemma \ref{lem:largeY} ensure that a geodesic $\gamma$ from $e$ to $b\in B$ must pass through a vertex $d_{t}$ disjoint from a curve $t$ in $T$, and a vertex $d_{c}$ disjoint from $c$. Choose $j\in \mathbb{Z}$ such that $f=D_{c}^{-j}D_{t}^{j}D_{c}^{-n}(e)\in s_{2}$ as guaranteed by Lemma \ref{25and27}. In the case that $d_{t}$ is also disjoint from $c$, a path from $f$ to $b$ of length equal to the length of $\gamma$ is constructed as in Figure \ref{finalfigure} (a). When $d_{t}$ intersects $c$, the vertex $d_{t}$ will not be fixed by $D_{c}^{-(j+n)}D_{t}^{j}$. In this case, either the subpath $\gamma_{1}$ or $\gamma_{2}$ contains the vertex $d_{c}$. In the first case, a path from $f$ to $b$ with the same length as $\gamma$ is constructed as in Figure \ref{finalfigure} (b), and in the second case, a path from $f$ to $b$ with the same length as $\gamma$ is constructed as in Figure \ref{finalfigure} (c). This concludes the proof of the third statement, and hence of the proof.
\end{proof}

%It follows from the constraints on $R$ discussed above that for any $n$, a geodesic  shorter than $d(A,B)$ from a vertex representing a curve $e$ in $D_{c}^{n}(s_{3})$ to a vertex representing a curve $b$ in $B$ must pass through a vertex representing a curve disjoint from some $t$. Given a geodesic $\gamma$ from $e$ to $b$, this will be used to construct a path $\gamma^{'}$ from a vertex $f:=D_{c}^{-j}D_{t}^{j}D_{c}^{-n}(e)$ in $A$ to the vertex $b$, that is no longer than $\gamma$. Here $j$ is chosen such that $D_{\mathcal{A}_{c}}(f,b)\leq 1$.\\

%Denote by $d_{t}$ the vertex on $\gamma$ representing a curve disjoint from $t$. The path $\gamma^{'}$ is composed of two geodesic segments. One geodesic segment connects $f$ to a vertex $t^{'}$, and the other connects $t^{'}$ to $b$.  If $d_{t}$ is disjoint from $c$, then $t^{'}$ and $d_{t}$ coincide. Otherwise, $t^{'}$ is obtained from $d_{t}$ by performing some number of Dehn twists around $c$, so as to minimise $d_{\mathcal{A}_{c}}(t^{'}, f)$ and hence also $d_{\mathcal{A}_{c}}(t^{'}, b)$. It follows that $d(f,t^{'})\leq d(e,d_{t})$ and $d(d_{t},b)\leq d(t^{'},b)$, and the length of $\gamma^{'}$ is less than or equal to that of $\gamma$, as required.
%\end{proof}

\bibliographystyle{plain}
%\nocite{*}
\bibliography{mcgbib}

\end{document}